\documentclass[12pt,a4paper,reqno]{amsart}
\usepackage[all]{xy} 
\usepackage{amsmath}
\usepackage{amsfonts}
\usepackage{amssymb}
\usepackage{mathrsfs}
\usepackage{amsthm}

\usepackage{tensor}
\usepackage{cancel}
\usepackage{tikz}
\usepackage{xcolor}
\usepackage{graphicx}
\newcommand{\Bmu}{\mbox{$\raisebox{-0.59ex}
  {$l$}\hspace{-0.18em}\mu\hspace{-0.88em}\raisebox{-0.98ex}{\scalebox{2}
  {$\color{white}.$}}\hspace{-0.416em}\raisebox{+0.88ex}
  {$\color{white}.$}\hspace{0.46em}$}{}}

\usepackage{amsmath,calligra,mathrsfs}

\newcommand{\scHom}{\mathscr{H}\text{\kern -3pt {\calligra\large om}}}
\newcommand{\scExt}{\mathscr{E}\text{\kern -3pt {\calligra\large xt}}}

\DeclareMathAlphabet{\mathbbold}{U}{bbold}{m}{n}

\DeclareFontEncoding{OT2}{}{} 

\theoremstyle{plain}
\newtheorem{theorem}{Theorem}[subsection]
\newtheorem{mytheorem}{Theorem}

\newtheorem{remark}[theorem]{{\textrm{Remark}}}
\newtheorem{lemma}[theorem]{Lemma}
\newtheorem{corollary}[theorem]{Corollary}

\newtheorem*{hypothesis}{Hypothesis}

\def\le{\kern 0.03em}

\def\F{{\mathbb F}}
\def\tF{{\mathbf F}}

\def\K{{\mathcal K}}

\def\M{{\mathcal M}}

\def\O{{\mathcal O}}

\def\Q{{\mathbb Q}}

\def\Z{{\mathbb Z}}

\newcommand{\rig}{\mathrm{rig}}
\def\e{\kern 0.08em}
\def\be{\kern -.1em}
\def\lbe{\kern -.025em}

\DeclareMathOperator{\Aut}{Aut}

\DeclareMathOperator{\coh}{H}
\DeclareMathOperator{\Frob}{Frob}
\DeclareMathOperator{\Gal}{Gal}

\DeclareMathOperator{\CH}{CH} \DeclareMathOperator{\Nm}{N}
 
\DeclareMathOperator{\Sel}{{\rm{Sel}}}

\DeclareMathOperator{\Hom}{Hom}

\DeclareMathOperator{\ord}{ord} 
\DeclareMathOperator{\coker}{coker}

\begin{document}
\title[Iwasawa main conjecture for ordinary semistable elliptic curves]{Iwasawa Main Conjecture for ordinary semistable elliptic curves over global function fields}
\begin{abstract}
Let $A$ be an ordinary elliptic curve over a global function field $K$ of characteristic $p$, assumed semistable at every place, and let $L/K$ be a $\mathbb{Z}_p^d$-extension ramified only at finitely many places where $A$ has ordinary reduction. Building on the framework of \cite{tan24}, we prove the Iwasawa Main Conjecture for $A$ over $L$, subject to a technical $\mu$-invariant hypothesis that is already detected after specialization to the unramified $\mathbb{Z}_p$-extension.
The principal new input is a `$\chi$-formula' that compares appropriate $\chi$-isotypic characteristic ideals of Selmer modules with the corresponding specializations of the $p$-adic $L$-function.
Finally, to show that our $\mu$-hypothesis is non-vacuous, we prove, for $p>3$, that the hypothesis holds on a Zariski open dense locus in the moduli of semistable elliptic curves.
\end{abstract}
\author{Ki-Seng Tan}
\address{Department of Mathematics\\
National Taiwan University\\
Taipei 10764, Taiwan}
\email{tan@math.ntu.edu.tw}
\author{Fabien Trihan}
\address{Sophia University,
Department of Information and Communication Sciences
7-1 Kioicho, Chiyoda-ku, Tokyo 102-8554, JAPAN}
\email{f-trihan-52m@sophia.ac.jp}

\author{Kwok-Wing Tsoi}
\address{Department of Mathematics\\
National Taiwan University\\
Taipei 10764, Taiwan}
\email{kwokwingtsoi@ntu.edu.tw}
\maketitle
\section{Introduction}\label{s:int}

A guiding principle of Iwasawa theory is that suitably constructed $p$-adic $L$-functions should govern, in a precise ideal-theoretic sense, the structure of the corresponding Selmer modules over $p$-adic Lie extensions. In the function field setting this philosophy has become increasingly concrete, and now admits
formulations that run closely parallel to the classical picture over number fields.

In \cite{tan24}, the first-named author associated to an ordinary elliptic curve $A$ over a global function field $K$ of characteristic $p$ and to a $\Z_p^d$-extension $L/K$ a $p$-adic $L$-function,
characterised by an interpolation property for twisted Hasse--Weil $L$-values.
Beyond interpolation, \emph{loc.\,cit.} establishes the expected functional equation, proves a suite of specialization formulae, and verifies the Iwasawa Main Conjecture in several important cases. The purpose of the present article is to push this circle of ideas beyond those situations.

To be more precise, we assume that $A$ is semistable at every place of $K$ and that $L/K$ is ramified at only finitely many places of ordinary reduction for $A$ (good ordinary or multiplicative). Under a technical hypothesis on $\mu$-invariants which is formulated so as to be already \emph{detectable} after specialization to the unramified $\Z_p$-extension $K_\infty^{(p)}/K$, we prove the predicted equality between the principal ideal generated by the $p$-adic $L$-function and the characteristic ideal of the Pontryagin dual of $\Sel_{p^\infty}(A/L)$. The key new ingredient is a  `$\chi$-formula' comparing the relevant
$\chi$-isotypic characteristic ideals with the corresponding specializations of the $p$-adic $L$-function; when combined with the functional equation and with the standard specialization and restriction compatibilities on both the analytic and algebraic sides, this comparison yields the main conjecture for general $\Z_p^d$-extensions.

Finally, to underline that our $\mu$-hypothesis is far from vacuous, we show that for $p>3$ it holds generically: more precisely, we prove that it is valid on a Zariski open dense locus in the appropriate moduli space of semistable curves.

\subsection{The main conjecture} We begin by recalling the formulation of the main conjecture in the present setting.
Put $\Gamma:=\Gal(L/K)$ and $\Lambda_\Gamma:=\Z_p[[\Gamma]]$, and let
$X_L$ denote the Pontryagin dual of $\Sel_{p^\infty}(A/L)$. In the present setting one knows that $X_L$ is finitely generated over $\Lambda_\Gamma$
(see \cite[Corollary~3.2.3]{ttt24}), and hence its characteristic ideal $\CH_{\Lambda_\Gamma}(X_L)\subseteq \Lambda_\Gamma$ is well-defined under the convention that $\CH_{\Lambda_\Gamma}(X_L)=(0)$ if (and only if) $X_L$ is non-torsion.
We also write $\mu(X_L)$ for the $\mu$-invariant of $X_L$, such that $\mu(X_L)=\infty$ for non-torsion $X_L$.

On the analytic side, the main conjecture predicts the existence of a $p$-adic $L$-function. More precisely,  \cite[Definition~3.4.1]{tan24} defines an element
\[
  \mathscr L_{A/L} \in \Q_p\Lambda_\Gamma := \Q_p\otimes_{\Z_p}\Lambda_\Gamma,
\]
(denoted $\mathscr L_{A/L/K}$ when it is necessary to specify the base field $K$),
characterised by a $p$-adic interpolation property for suitably twisted Hasse--Weil
$L$-values. To keep track of $p$-divisibility of elements in $\Q_p\Lambda_\Gamma$, we employ the $\mu$-invariant which is defined as follows. For a non-zero $f\in \Q_p\Lambda_\Gamma$, we write $\mu(f)\in\Z$ for the unique integer such that
\[
  f_0 := p^{-\mu(f)}\,f \in \Lambda_\Gamma
  \qquad\text{and}\qquad
  f_0\not\equiv 0 \pmod{p},
\]
while if $f=0$, put $\mu(f)=\infty$.
A basic compatibility result in \emph{loc.\,cit.} (cf.\ Proposition~1.1.2) asserts that
\begin{equation}\label{e:mu}
  \mu(\mathscr L_{A/L}) \;=\; \mu(X_L).
\end{equation}
In particular, \eqref{e:mu} implies that $\mathscr L_{A/L}$ in fact belongs to $\Lambda_\Gamma$.

The element $\mathscr L_{A/L}$ is characterised by an interpolation property: it turns out that there exists a non-zero element $\dag_{A/L}\in\Lambda_\Gamma$, to be described explicitly in
\S\ref{su:not}, such that for every finite-order character $\omega\in\hat\Gamma$ with
$\omega(\dag_{A/L})\neq 0$ one has (\emph{loc.\ cit.}, Lemma~4.1.1)
\begin{equation}\label{e:intp}
  \omega(\mathscr L_{A/L})
  \;=\;
  \star_\omega\cdot L_{A/K}(\omega,1),
\end{equation}
where $\star_\omega$ is an explicit correction factor and $L_{A/K}(\omega,s)$ denotes the
classical $\omega$-twisted $L$-function (see \S\ref{su:not}).
Here $\hat\Gamma$ denotes the group of continuous homomorphisms
\[
  \omega:\Gamma \longrightarrow \Bmu_{p^\infty},
\]
where $\Bmu_{p^\infty}$ is given the discrete topology, so that every $\omega$ has finite order.
We extend $\omega$ uniquely to a $\Z_p$-algebra homomorphism
\[
  \omega:\Lambda_\Gamma\longrightarrow \O_\omega:=\Z_p[\omega(\Gamma)].
\]

A further point is that the interpolation property
\eqref{e:intp} determines $\mathscr L_{A/L}$ uniquely. Indeed, in the topology of Monsky, for
any non-zero element $f\in \Lambda_\Gamma$ the vanishing locus
\[
  \{\omega\in\hat\Gamma \mid \omega(f)=0\}
\]
is a proper closed subset of $\hat\Gamma$ \cite[Theorems~2.2 and~2.6]{monsky}.
Consequently, since $\dag_{A/L}\neq 0$, the values of $\omega(\mathscr L_{A/L})$ for all
$\omega$ with $\omega(\dag_{A/L})\neq 0$ uniquely specify $\mathscr L_{A/L}$.

The Iwasawa Main Conjecture for $A$ over $L/K$ is the
following equality of ideals in $\Lambda_\Gamma$:
\begin{equation}\label{e:imc}
  (\mathscr L_{A/L}) \;=\; \CH_{\Lambda_\Gamma}(X_L).
\end{equation}
It is worth noting that if $X_L$ is non-torsion then
\eqref{e:mu} already implies \eqref{e:imc}. More precisely, 
\begin{equation}\label{e:nontor}
\mathrm{CH}_{\Lambda_\Gamma}(X_L)=(0)\; \Longleftrightarrow \;\mathscr L_{A/L}=0.
\end{equation}

A useful feature of the situation at hand is that the two ideals appearing in \eqref{e:imc}
exhibit a striking parallelism. Before stating our main results, we therefore recall a small
collection of standard properties that are shared by the analytic ideal $(\mathscr L_{A/L})$ and
the algebraic ideal $\CH_{\Lambda_\Gamma}(X_L)$.
\subsection{Properties (I): Functional equations}\label{su:FE}

A first point is that both sides of the main conjecture
\eqref{e:imc} are compatible with the natural involution on the Iwasawa algebra induced by
inversion in $\Gamma$.  This compatibility, conceptually, may be viewed as the Iwasawa-theoretic shadow of the usual functional equation for complex $L$-functions, and it is one of the reasons that the
analytic and algebraic objects in \eqref{e:imc} can reasonably be expected to coincide. To make this precise, let
\[
  {}^\sharp:\Lambda_\Gamma \longrightarrow \Lambda_\Gamma,\qquad x\longmapsto x^\sharp,
\]
denote the $\Z_p$-algebra isomorphism induced by $\gamma\mapsto \gamma^{-1}$ for all
$\gamma\in\Gamma$.
Then the algebraic side satisfies the functional equation
(\cite[Theorem~2]{lltt18})
\begin{equation}\label{e:agfe}
  \CH_{\Lambda_\Gamma}(X_L) \;=\; \CH_{\Lambda_\Gamma}(X_L)^\sharp,
\end{equation}
whilst the analytic side satisfies the corresponding identity
(\cite[Proposition~4.2.3]{tan24})
\begin{equation}\label{e:anfe}
  (\mathscr L_{A/L}) \;=\; (\mathscr L_{A/L})^\sharp.
\end{equation}

\subsection{Properties (II): Specialization formulae}\label{su:SP}

A second feature of the main conjecture is its functoriality with respect to passage to intermediate extensions. Roughly speaking, the analytic and algebraic
objects in \eqref{e:imc} behave well under specialization from $\Lambda_\Gamma$ to the Iwasawa algebra of any intermediate $\Z_p^e$-extension.  This compatibility is one of the main tools in reducing multivariable statements to statements in fewer variables.

Let $L'/K$ be an intermediate $\Z_p^e$-extension of $L/K$ (with $e\geq 0$), and put
$\Gamma':=\Gal(L'/K)$.  Let
\[
  p^L_{L'}:\Lambda_\Gamma\longrightarrow \Lambda_{\Gamma'}
\]
be the $\Z_p$-algebra homomorphism induced by the natural restriction map
$\Gamma\to\Gamma'$.
Then the specialization formulae of \cite[Propositions~2.1.5 and~4.3.1]{tan24} assert that there
exist explicit principal ideals $\varrho_{L/L'}$ and $\vartheta_{L/L'}$ in $\Lambda_{\Gamma'}$
(to be described precisely in \S\ref{su:not}) such that
\begin{equation}\label{arithspf}
  \varrho_{L/L'}\cdot p^L_{L'}\!\bigl(\CH_{\Lambda_\Gamma}(X_L)\bigr)
  \;=\;
  \vartheta_{L/L'}\cdot \CH_{\Lambda_{\Gamma'}}(X_{L'}),
\end{equation}
and, provided that $p^L_{L'}(\dag_{A/L})\neq 0$, one also has the corresponding analytic
specialization
\begin{equation}\label{anspf}
  \varrho_{L/L'}\cdot p^L_{L'}\!\bigl((\mathscr L_{A/L})\bigr)
  \;=\;
  \vartheta_{L/L'}\cdot (\mathscr L_{A/L'}).
\end{equation}
In particular, up to the explicit `correction factors' $\varrho_{L/L'}$ and $\vartheta_{L/L'}$,
the two sides of \eqref{e:imc} transform under $p^L_{L'}$ in exactly the same way.
\subsection{Properties (III): Restriction formulae}\label{ss:restriction}

A third compatibility concerns change of the base field inside the extension $L/K$.
More precisely, if one replaces $K$ by a finite intermediate field $K'\subset L$, then both the analytic and algebraic sides of \eqref{e:imc} admit natural `norm-type' descent operations, and these again behave in parallel.

To be more precise, let $K'/K$ be a finite intermediate extension of $L/K$, and put $\Phi:=\Gal(L/K')$.
Choose a set of representatives $C\subset \Gamma$ for the cosets of $\Gamma/\Phi$, so that
$\Gamma=\bigsqcup_{\sigma\in C}\Phi\cdot \sigma$.  Then every $f\in\Lambda_\Gamma$ can be uniquely written as
\[
  f=\sum_{\sigma\in C} f_\sigma\cdot \sigma,
  \qquad f_\sigma\in\Lambda_\Phi.
\]
For each character $\chi\in\widehat{\Gamma/\Phi}\subset\hat\Gamma$, define $ f_\chi:=\sum_{\sigma\in C} f_\sigma\cdot \chi(\sigma)\cdot \sigma.$
Note that $f_\chi$ is independent of the choice of $C$, and the assignment
$f\mapsto f_\chi$ induces an $\O_\chi$-algebra automorphism of $\O_\chi\Lambda_\Gamma$. Then we set
\[
  f^\Gamma_\Phi \;:=\; \prod_{\chi\in\widehat{\Gamma/\Phi}} f_\chi.
\]
A key point is that $f^\Gamma_\Phi$ actually lies in $\Lambda_\Phi$ (see \cite[\S 5.1]{tan24}).
Moreover, the map $\Lambda_\Gamma\to\Lambda_\Phi$, $f\mapsto f^\Gamma_\Phi$, is multiplicative
(and satisfies $1^\Gamma_\Phi=1$).  In particular, if $I=(f)$ is a principal ideal of
$\Lambda_\Gamma$, we may define a corresponding principal ideal of $\Lambda_\Phi$ by
\[
  I^\Gamma_\Phi \;:=\; (f^\Gamma_\Phi).
\]

For a $\Lambda_\Gamma$-module $M$, we write ${\rm Res}^{\Lambda_\Gamma}_{\Lambda_\Phi}M$ for the
same underlying module regarded as a $\Lambda_\Phi$-module.
The restriction formulae of \cite[Proposition~5.1.2]{tan24} then assert that
\begin{equation}\label{e:rest}
\mathrm{CH}_{\Lambda_\Phi}\!\left(\mathrm{Res}^{\Lambda_\Gamma}_{\Lambda_\Phi} X_L\right) 
    \;=\; \mathrm{CH}_{\Lambda_\Gamma}(X_L)^\Gamma_\Phi,
\end{equation}
and that the analytic side satisfies the corresponding identity
\begin{equation}\label{e:anrest}
  (\mathscr L_{A/L/K'}) \;=\; (\mathscr L_{A/L/K})^\Gamma_\Phi.
\end{equation}

Thus, under finite base change $K'/K$, both sides of the main conjecture descend by the same
norm-type operation. 

\subsection{The $\chi$-formula}\label{su:maint}

For the remainder of the article (until \S\ref{s:isotrivial}) we impose the standing hypothesis
that $A(L)[p^\infty]$ is finite; this is automatic whenever $A/K$ is non-isotrivial (see
\cite{blv09}).
The isotrivial case will be treated separately in \S\ref{s:isotrivial}.

In Theorem \ref{t:chi} and Theorem \ref{t:1st}  below,
we shall present our $\chi$-formula by two different but equivalent statements.
Denote $L_0:=K_\infty^{(p)}$, the unramified $\Z_p$-extension of $K$.
 For the rest of this subsection (\S\ref{su:maint}),
assume that $L= L_0L_1$ where $L_0/K$ and $L_1/K$ are disjoint $\Z_p$-extensions. Write 
$\Gamma_i:=\Gal(L_i/K)$, for $i=0,1$.

The `unramified direction' $\Gamma_0$ provides a convenient axis along which one may compare the two sides of \eqref{e:imc} after twisting by finite-order characters of the `ramified direction' $\Gamma_1$. To see this, fix  a character $\chi\in\hat\Gamma_1\subset\hat\Gamma$, and let $\O_\chi$ be the $\Z_p$-order
generated by $\chi(\Gamma_1)$, with fraction field $\Q_\chi$.
Suppose $\chi$ factors through $G:=\Gal(K'/K)$ for some finite intermediate extension $K'/K$ of $L_1/K$. Put $L'_0:=L_0K'\subset L$ and denote $\Gamma_0':=\Gal(L_0'/K)$, $\Lambda_{\Gamma_0'}:=\Z_p[[\Gamma_0']]$. 
We identify $\Gamma_0'$ with $\Gamma_0\times G$, by $\gamma\mapsto (\gamma\mid_{L_0},\gamma\mid_{K'})$.
For a $ \Lambda_{\Gamma_0'}$-module $M$,
define its \(\chi\)-isotypic quotient
\[
M^{(\chi)} := M \otimes_{\Z_p[G]} \Q_\chi,
\]
where \( \Q_\chi \) is viewed as a left $G$-module via $\chi$. View $M^{(\chi)}$ as a $\Q_\chi\Lambda_{\Gamma_0}$-module.

Since $X_{L_0'}$ is finitely generated over $\Lambda_{\Gal(L_0'/K')}$, it is also finitely generated over $\Lambda_{\Gamma_0'}$, and hence $X_{L_0'}^{(\chi)}$ is finitely generated over $\Q_\chi\Lambda_{\Gamma_0}$ (see Lemma \ref{LemFin}).

The aforementioned three properties (functional equations, specialization and restriction formulas) show that the two sides of \eqref{e:imc} transform in strikingly parallel fashion. What is missing is a theorem that genuinely \emph{links} them. The first of our main results provides such a link by identifying, after twisting by $\chi$, the
relevant Selmer-theoretic characteristic ideal over $\Gamma_0$ in terms of twisted Hasse--Weil
$L$-values.

 \begin{mytheorem}\label{t:chi} Let the notations be as above. In $\Q_\chi\Lambda_{\Gamma_0}$, the ideal
\begin{equation}\label{e:5th}
\mathrm{CH}_{\Q_\chi\Lambda_{\Gamma_0}}(X_{L'_0}^{(\chi)})=(\mathsf c_\chi), \hbox{ for some element }\; \mathsf c_\chi\in\Q_\chi\Lambda_{\Gamma_0},
\end{equation}
such that for all $\omega\in\hat\Gamma_0$,
$$\omega(\mathsf c_\chi)=L_{A/K}(\omega^{-1}\chi^{-1},1).$$
\end{mytheorem}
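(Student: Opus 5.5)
The plan is to compute $X_{L_0'}^{(\chi)}$ along the unramified tower using the geometric (crystalline/flat) description of Selmer groups over $K_\infty^{(p)}$. The model is the known Main Conjecture for the unramified $\Z_p$-extension, proved via the Kato--Trihan $p$-adic cohomological interpretation of the $L$-function, which the paper treats as background.

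The basic identification is that $L_0'/K'$ is the constant $\Z_p$-extension of $K'$. So $X_{L_0'}$ is a $\Lambda_{\Gal(L_0'/K')}$-module on which $G$ acts semilinearly. Taking the $\chi$-part amounts to considering the Selmer complex of $A$ over $K$ with coefficients in the induced $\Z_p[G]$-module, twisted by $\chi^{-1}$.

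\textbf{Step 1 (cohomological description).} Following Kato--Trihan, and as used in \cite{tan24} and \cite{ttt24}, I would express $\Sel_{p^\infty}(A/L_0')$, up to finite (pseudo-null) error, via syntomic/flat cohomology of the Néron model over the curve $X'$ with function field $K'$. The Frobenius $\gamma_0\in\Gamma_0$ then acts through the $\varphi$-linear operator on a finite-rank rigid cohomology $H^1_{\rig}$ of the relevant $F$-crystal. The characteristic ideal of $X_{L_0'}$ over $\Lambda_{\Gamma_0'}$ is therefore given by a characteristic polynomial $\det(1-\varphi\, \gamma_0^{-1}\mid H)$ of a unit-root part, extended $G$-equivariantly. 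Semistability and ordinarity, with $L_1/K$ ramified only at ordinary places, are what make the local conditions at ramified places and at bad places behave (Tate curve/ordinary filtration). The hypothesis that $A(L)[p^\infty]$ is finite kills the boundary $H^0$ and $H^2$ terms.

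\textbf{Step 2 ($\chi$-projection).} Tensoring with $\Q_\chi$ over $\Z_p[G]$ is exact on $\Q_p$-modules, since $|G|$ is invertible after inverting $p$. Hence $\CH_{\Q_\chi\Lambda_{\Gamma_0}}(X_{L_0'}^{(\chi)})$ is generated by $\mathsf c_\chi:=\det(1-\Frob\cdot\gamma_0^{-1}\mid H^{(\chi)})$ on the $\chi$-isotypic part of the cohomology. Lemma \ref{LemFin} gives finite generation. Evaluating at $\omega\in\hat\Gamma_0$ replaces $\gamma_0$ by $\omega(\gamma_0)$. By the Grothendieck--Étesse--Le Stum trace formula applied to the rank-one twisted coefficient $\chi^{-1}\omega^{-1}$ (a unit-root $F$-isocrystal on $X$ with log-poles at the ramified places), this determinant equals the twisted $L$-function $L_{A/K}(\omega^{-1}\chi^{-1},s)$ at $s=1$. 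The point $s=1$ corresponds to the slope-$0$/slope-$1$ separation in the ordinary case.

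\textbf{Step 3 (matching).} I would check that the Euler factors at the bad places come out correctly. These are the places where $A$ is multiplicative or where $\chi$ is ramified. Here the twisted $L$-function uses the conductor of $A\otimes\chi\omega$. At ramified ordinary places the local Selmer condition is the ordinary one, which drops the Euler factor exactly as in $L_{A/K}(\chi^{-1}\omega^{-1},s)$ with imprimitive factor removed. I also need that no correction factor $\star_\omega$ appears, since the statement has none. This must come from the normalization in the cohomological side, where periods are absent because $\Q_\chi$-coefficients absorb units.

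The main obstacle is Step 1/Step 3 at ramified places of $\chi$. There the coefficient is a ramified rank-one isocrystal. I would handle this by working with log-crystalline cohomology with compact support at those places and compare it with the local flat condition using the ordinary filtration. That comparison is the part that needs real input and is not routine.
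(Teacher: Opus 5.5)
Your overall route is the paper's: Kato--Trihan/LLTT cohomology along the constant $\Z_p$-extension of $K'$, exactness of the $\chi$-projection once $p$ is inverted, and a trace formula for the twisted coefficient. But the step you flag as ``not routine'' is exactly where the proof lives, and the way you propose to do it would not work.

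\textbf{The ramified places of $\chi$.} Since $\chi$ factors through the $p$-group $G$, it is \emph{wildly} ramified wherever it ramifies. So the unit-root overconvergent $F$-isocrystal $U(\chi)^\dagger$ attached to it on $U$ has irregular singularities there. It is not an $F$-isocrystal on the complete curve with log-poles, and there is no log-crystalline cohomology of the twisted coefficient on the base to compare with a local flat condition.

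If instead you stay, as in your Step 1, with the N\'eron model of $A\times_K K'$ over the complete curve $X'$, the error term is indeed small. But then $X'\to X$ is ramified outside the log locus, so the isotypic comparison with a twisted coefficient on the base, which needs an \'etale cover, is not available.

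The paper resolves this tension as follows. It puts the ramification locus of $K'/K$ into $Z$ and works on the cover with the log-crystal $D^{\log}(A)(-Z)$, i.e.\ with compact supports at $Z$. It proves the $G$-equivariant control isomorphism $(P^i_\infty)^{(\psi)}\cong(P^i_N)^{(\psi)}$ of Lemma~\ref{crys-descent}. Only then does it descend to the base, in overconvergent rigid cohomology with compact supports of the finite \'etale cover $U'_N\to U$ (Lemma~\ref{Leminterpol}, via Tsuzuki), where the trace formula applies.

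The price of compact supports at a good place $v\in Z$ is the term $\mathcal M_{\infty,v}=\varprojlim_n\bigl[\oplus_{w_n\mid v}A_v(k_{w_n})^\vee\bigr]$ in the exact sequence $0\to X_{L_0'}\to N^1_\infty\to\mathcal M_\infty\to A(L_0')[p^\infty]^\vee\to N^0_\infty\to0$. This term is in general not pseudo-null, so your ``up to finite (pseudo-null) error'' is false in this setting. The missing idea is that the term dies under the projection: inertia at $v$ acts trivially on $\mathcal M_{\infty,v}$, while $\chi$ is nontrivial on inertia, so $\mathcal M_{\infty,v}^{(\chi)}=0$. No comparison with the ordinary filtration is needed.

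\textbf{The Euler factors.} Your Step 3 puts this problem in the wrong place. Where $\chi$ ramifies, the Euler factor of $L_{A/K}(\omega^{-1}\chi^{-1},s)$ is already $1$: because $A$ is semistable, wild inertia acts on $V_\ell A\otimes\chi^{-1}\omega^{-1}$ through the nontrivial scalar $\chi^{-1}$. So nothing needs matching there.

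What the trace formula on $U$ actually produces is the imprimitive value $L_Z(A,\psi^{-1},1)$. The point to check is that the factors removed at the multiplicative places $v\in Z$ unramified in $K'/K$, namely $1-\lambda_v\chi([v]_{K'/K})^{-1}[v]_{L_0/K}\,q_v^{-1}$, are units of $\Q_\chi\Lambda_{\Gamma_0}$. This holds because $q_v\pm\chi([v]_{K'/K})^{-1}\in\O_\chi^\times$.

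With that, one simply takes $\mathsf c_\chi=P_{\chi^{-1}}(q^{-1}\mathsf F_q^{-1})$, which has the required interpolation property by construction. The fudge factor $\star_\omega$ never enters: it belongs to $\mathscr L_{A/L}$, not to this statement.
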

Theorem~\ref{t:chi} will be proved in \S\ref{su:pftchi}. First we make a few remarks regarding the theorem.
\begin{remark}\label{r:rk}

\begin{enumerate}
\item When $K'=K$ (so that $\chi$ is trivial), the result is already known by \cite{lltt14a};
moreover, in \cite{tan24} this special case, together with the restriction identities
\eqref{e:rest} and \eqref{e:anrest}, is used to deduce \eqref{e:mu}. 
\item A noteworthy feature of \eqref{e:5th} is that the ideal
$\CH_{\Q_\chi\Lambda_{\Gamma_0}}(X_{L_0'}^{(\chi)})$ is independent of the auxiliary choice of
$K'$. 
\end{enumerate}
\end{remark}
Our second main theorem is an equivalent reformulation of Theorem~\ref{t:chi} that avoids the
auxiliary choice of $K'$.
It may be viewed as a strengthened `$\chi$-formula', expressed purely in terms of
specialization along the unramified direction. To state it, we first extend the specialization map $p^L_{L_0}$ to an $\O_\chi$-algebra
homomorphism
\[
  p^L_{L_0}:\O_\chi\Lambda_\Gamma \longrightarrow \O_\chi\Lambda_{\Gamma_0}.
\]
For $f\in\O_\chi\Lambda_\Gamma$, set
\[
  {}^\chi f \;:=\; p^L_{L_0}(f_\chi),
\]
so that $f\mapsto {}^\chi f$ defines an $\O_\chi$-algebra homomorphism
$\O_\chi\Lambda_\Gamma\to \O_\chi\Lambda_{\Gamma_0}$.

\begin{mytheorem}\label{t:1st}
With the above notation, one has the equality of ideals in $\O_\chi\Lambda_{\Gamma_0}$
\begin{equation}\label{e:1st}
  {}^\chi \CH_{\Lambda_\Gamma}(X_L) \;=\; ({}^\chi \mathscr L_{A/L}).
\end{equation}
\end{mytheorem}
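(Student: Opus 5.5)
The plan is to deduce Theorem~\ref{t:1st} from Theorem~\ref{t:chi} by evaluating both sides of \eqref{e:1st} at characters $\omega\in\hat\Gamma_0$ and comparing them with the twisted $L$-values $L_{A/K}(\omega^{-1}\chi^{-1},1)$.

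\emph{Analytic side.} First observe that for $\omega\in\hat\Gamma_0$ one has $\omega({}^\chi f)=(\omega\chi)(f)$, where $\omega\chi\in\hat\Gamma$ is the product character on $\Gamma=\Gamma_0\times\Gamma_1$. This holds because $f\mapsto f_\chi$ twists by $\chi$, and $p^L_{L_0}$ followed by $\omega$ is just $\omega$ viewed on $\Gamma$.

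\emph{Algebraic side.} The aim is to show that ${}^\chi\CH_{\Lambda_\Gamma}(X_L)$, after extending scalars to $\Q_\chi$, coincides with $\CH_{\Q_\chi\Lambda_{\Gamma_0}}(X_{L_0'}^{(\chi)})$, up to the explicit factors $\varrho$ and $\vartheta$. The steps are:
\begin{enumerate}
\item Apply the specialization formula \eqref{arithspf} to $L/L_0'$. This gives $\CH_{\Lambda_{\Gamma_0'}}(X_{L_0'})$, up to $\varrho_{L/L_0'}$ and $\vartheta_{L/L_0'}$, as $p^L_{L_0'}\CH_{\Lambda_\Gamma}(X_L)$.
\item Since $\Gamma_0'=\Gamma_0\times G$ and $\Q_\chi$ is a direct factor of $\Q_p[G]\otimes\Q_\chi$, taking $\chi$-isotypic components commutes with characteristic ideals after inverting $p$.
\item Conclude that $\CH(X_{L_0'}^{(\chi)})$ is the image of $\CH_{\Lambda_{\Gamma_0'}}(X_{L_0'})$ under $\Lambda_{\Gamma_0'}\to\Q_\chi\Lambda_{\Gamma_0}$, $g\mapsto\chi(g)g|_{L_0}$. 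Composing with $p^L_{L_0'}$ gives exactly ${}^\chi$.
\end{enumerate}
The same computation applied to \eqref{anspf} shows that $\varrho\cdot{}^\chi\mathscr L_{A/L}$ equals $\vartheta\cdot{}^\chi\mathscr L_{A/L_0'}$ under the same map.

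\emph{Comparison.} Theorem~\ref{t:chi}, together with the interpolation \eqref{e:intp} for $\mathscr L_{A/L_0'}$, gives
$\omega(\mathsf c_\chi)=L_{A/K}(\omega^{-1}\chi^{-1},1)$ and $(\omega\chi)(\mathscr L)=\star_{\omega\chi}L_{A/K}(\omega\chi,1)$. The functional equations \eqref{e:agfe} and \eqref{e:anfe} let us replace $\omega\chi$ by its inverse. This should produce an identity
$$\omega({}^\chi\mathscr L_{A/L})\cdot u_\omega=\omega(\mathsf c_\chi^\sharp)\cdot(\text{explicit factors})$$
for all $\omega$ outside a proper Monsky-closed set. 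The explicit factors here are the $\varrho,\vartheta,\star$ and $\dag$ factors of \S\ref{su:not}. By Monsky's density theorem, the resulting elements of $\Q_\chi\Lambda_{\Gamma_0}$ agree, so \eqref{e:1st} holds after inverting $p$.

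\emph{Integral step.} It remains to upgrade the equality from $\Q_\chi\Lambda_{\Gamma_0}$ to $\O_\chi\Lambda_{\Gamma_0}$, which means matching the $\mu$-invariants of both sides of \eqref{e:1st}. Here I would use \eqref{e:mu} over the base $K'$, combined with the restriction formulas \eqref{e:rest} and \eqref{e:anrest}: these identify $\prod_\chi({}^\chi\cdot)$ of each side with the corresponding norm to $\Lambda_{\Gal(L_0'/K')}$. Since the $\mu$-invariants of the products agree and each factor pair agrees up to a unit of $\Q_\chi$, a Galois-conjugacy argument over $\chi$ forces all the individual $\mu$-invariants to agree.

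\emph{Expected obstacle.} I expect two difficulties. The first is this last step: the $\mu$-invariants of the individual $\chi$-components are not obviously equal. Equality of the products only pins down their sum, and Galois conjugacy controls only conjugate $\chi$. The second is the careful bookkeeping showing that the correction factors and the $\star_\omega$ terms cancel exactly.
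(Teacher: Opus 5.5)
Your outline has the right architecture: reduce to Theorem~\ref{t:chi} by a control argument, compute the analytic side by interpolation, then match $\mu$-invariants. However, both load-bearing steps are missing.

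\textbf{The algebraic side.} Your step~(1) applies \eqref{arithspf} to $L/L_0'$. That formula is only available for intermediate $\Z_p^e$-extensions $L'/K$, with characteristic ideals in the domain $\Lambda_{\Gamma'}$. Here $\Gal(L_0'/K)=\Gamma_0\times G$ with $G$ finite, so the framework defines neither $\CH_{\Lambda_{\Gamma_0'}}(X_{L_0'})$ nor a $G$-equivariant $\vartheta_{L/L_0'}$ or $\mathscr L_{A/L_0'}$; the same applies to \eqref{anspf}.

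Working over $K'$, where $L_0'/K'$ is a $\Z_p$-extension, does not help. By \eqref{e:rest}, the specialized characteristic ideal over $\Lambda_{\Gal(L/K')}$ only sees $\prod_{\chi\in\hat G}{}^\chi(\cdot)$, so individual $\chi$-components are lost. What is needed is a $\chi$-equivariant control theorem proved directly, which is the main work of the paper:
\begin{enumerate}
\item the exact sequence $0\to\Q_\chi\prod_v\mathcal W^1_v\to\Q_\chi X_L/(\psi_0'-1)\to\Q_\chi X_{L_0'}\to0$;
\item the computation that the $\chi$-part of $\Q_\chi\mathcal W^1_v$ vanishes when $\chi$ ramifies at $v$ (inertia acts trivially on $\Q_\chi\mathcal W^1_v$), and otherwise has characteristic ideal $(\diamondsuit_{v,\chi})$;
\item the vanishing $\Q_\chi X_L[\psi_0-\chi(\psi_0)]=0$, deduced from $\Sel_{p^\infty}(A/L)/(\psi'_0-1)=0$. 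This is what makes $\CH(\Q_\chi X_L/(\psi_0-\chi(\psi_0)))$ equal ${}^\chi\CH_{\Lambda_\Gamma}(X_L)$.
\end{enumerate}
The correction factors also do not cancel by symmetry between two specialization formulas, since no analytic formula for $L/L_0'$ over $K$ exists. They must be matched explicitly. With $\dag_{A/L}=1$, the interpolation formula shows that $\Xi_{S,\omega^{-1}\chi^{-1}}$ equals $\omega\bigl(\prod_{v\in S}\diamondsuit_{v,\chi}\bigr)$ up to units. That matching is the real content you defer as ``bookkeeping''.

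\textbf{The integral step.} As you suspect, this does not close. \eqref{e:mu} controls $\mu$ of the multivariable elements, not of their specializations. A norm over all of $\hat G$ fixes only $\sum_\chi\mu$, and Galois conjugacy equates $\mu$ only among characters of the same order.

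The paper uses a pointwise input instead. Since $\dag_{A/L}=1$, \cite[Proposition~1.1.1]{tan24} gives $\ord_p(\omega\chi)(\CH_{\Lambda_\Gamma}(X_L))=\ord_p(\omega\chi)(\mathscr L_{A/L})$ for every $\omega\in\hat\Gamma_0$. If the two $\mu$-invariants differed, normalizing by the smaller would produce an element with $\mu=0$ all of whose values are divisible by $\pi_\chi$. This contradicts Monsky's finiteness (Lemma~\ref{l:value}).

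Your norm idea can be repaired. Run it over every layer $K_k$ of the cyclic extension $K'/K$, using the known main conjecture for the constant $\Z_p$-extension $L_0K_k/K_k$ together with Corollary~\ref{c:5.2.2} (over $K_k$) and \eqref{e:rest}, \eqref{e:anrest}. Successive quotients then isolate the product over characters of exact order $p^k$, which form a single Galois orbit. As written, though, the step is missing.
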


Theorem~\ref{t:1st} will be proved in \S\ref{su:pf1st}.

 \subsection{The technical hypothesis on $\mu$-invariants}\label{su:hypo} 
Theorems~\ref{t:chi} and~\ref{t:1st} provide a link between the two sides of \eqref{e:imc} after twisting by a finite-order character and specializing
along the `unramified direction'. To pass from these equalities to the full equality of ideals in $\Lambda_\Gamma$, one requires a mild hypothesis controlling $\mu$-invariants under specialization, which we now formulate.

For a given $\Z_p^d$-extension $L/K$, set $\tilde L:=L\,L_0$, and write $\Gamma_0:=\Gal(L_0/K)$ and $\tilde\Gamma:=\Gal(\tilde L/K)$.
\begin{hypothesis}[Unramified $\mu$-minimality hypothesis]\label{h:h}
The $\mu$-invariant of $\mathscr L_{A/\tilde L}\in \Lambda_{\tilde\Gamma}$ coincides with that of 
$p^{\tilde L}_{L_0}\!\left(\mathscr L_{A/\tilde L}\right)\in \Lambda_{\Gamma_0}$.
\end{hypothesis}

To motivate the hypothesis, we first note that, for any intermediate $\Z_p$-extension $L'/K$ of $\tilde L/K$, one always has
\[
\mu\!\left(p^{\tilde L}_{L'}(\mathscr L_{A/\tilde L})\right)\ \geq\ \mu\!\left(\mathscr L_{A/\tilde L}\right).
\]
It is also known that equality holds for a large class of choices of $L'/K$ (see \cite[Lemma~6.2.5]{tan24}).
Accordingly, the above hypothesis asserts that $\mu\!\left(p^{\tilde L}_{L'}(\mathscr L_{A/\tilde L})\right)$ attains its minimal value at $L'=L_0$.
It is worth noting that since the specialization formula holds (Corollary~\ref{c:5.2.2}) and 
$\varrho_{\tilde L/L_0}\cdot \vartheta_{\tilde L/L_0}$ is not divisible by $p$ (see \S\ref{su:not}), in 
$\Lambda_{\Gamma_0}$  one has
\[
\mu\!\left(p^{\tilde L}_{L_0}(\mathscr L_{A/\tilde L})\right)
=\mu\!\left(\mathscr L_{A/L_0}\right).
\]
Hence the hypothesis holds if $\mu\!\left(\mathscr L_{A/L_0}\right)=0$; equivalently, if $\mu\!\left(X_{L_0}\right)=0$ by \eqref{e:mu}. 

The point of the hypothesis is that it allows one to lift the `$\chi$-formulas' to the full main conjecture 
\eqref{e:imc} in the setting of \S\ref{su:maint}, see Lemma \ref{l:root}.

\begin{mytheorem}\label{t:IMC-hyp}
Under the `unramified $\mu$-minimality hypothesis', the Iwasawa Main Conjecture \eqref{e:imc} holds for $A$ over
$L/K$.
\end{mytheorem}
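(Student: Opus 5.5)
First reduce to the enlarged extension $\tilde L=LL_0$. By \eqref{arithspf} and \eqref{anspf} applied to $\tilde L/L$, the conjecture for $\tilde L$ implies it for $L$: both sides specialize with the same correction factors $\varrho_{\tilde L/L}$ and $\vartheta_{\tilde L/L}$. To use \eqref{anspf} we need $p^{\tilde L}_L(\dag_{A/\tilde L})\neq 0$; this should follow from the explicit description of $\dag$ in \S\ref{su:not}. If $L\supseteq L_0$ already, nothing needs to be done. If $X_{\tilde L}$ is non-torsion we are done by \eqref{e:nontor}, so assume it is torsion. We may also assume $\mathscr L_{A/\tilde L}\neq0$. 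By \eqref{e:mu} both sides have the same $\mu$-invariant, so after dividing by $p^{\mu}$ we can write $\CH_{\Lambda_{\tilde\Gamma}}(X_{\tilde L})=(g)$ and compare $f:=\mathscr L_{A/\tilde L}$ with $g$.

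Choose a decomposition $\tilde\Gamma=\Gamma_0\times\Gamma'$. Then $\Lambda_{\tilde\Gamma}=\Lambda_{\Gamma'}[[\Gamma_0]]$, and for each $\chi\in\hat\Gamma'$ the map $h\mapsto{}^\chi h$ is evaluation of the $\Gamma'$-variables at $\chi$. Theorem~\ref{t:1st} is stated for $\Z_p^2$-extensions $L_0L_1$. It extends to the multivariable case by applying it to each intermediate $\Z_p^2$-extension $L_0L_1\subset\tilde L$ through which $\chi$ factors, and using the specialization formulas with their explicit factors. This gives, for every $\chi\in\hat\Gamma'$,
\[
({}^\chi f)=({}^\chi g)\quad\text{in }\O_\chi\Lambda_{\Gamma_0},
\]
so ${}^\chi f=u_\chi\cdot{}^\chi g$ with $u_\chi\in(\O_\chi\Lambda_{\Gamma_0})^\times$, provided ${}^\chi g\neq0$.

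The key step is to lift these pointwise unit relations to $f/g\in\Lambda_{\tilde\Gamma}^\times$. This is Lemma~\ref{l:root}, and the $\mu$-hypothesis enters here. Hypothesis~\ref{h:h} says $\mu(p^{\tilde L}_{L_0}(f))=\mu(f)$, and the same holds for $g$ by the trivial-$\chi$ case together with \eqref{e:mu} and the $p$-indivisibility of the correction factors. Hence the reduction $\bar f\in\F_p[[\Gamma']][[\Gamma_0]]$ has nonzero constant term in the $\Gamma'$-direction, which is the image in $\F_p[[\Gamma_0]]$. By a Weierstrass-type argument in the $\Gamma_0$-variable over the base $\Lambda_{\Gamma'}$, this should make $f$ and $g$ distinguished-type elements relative to $\Lambda_{\Gamma_0}$.

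Concretely, I would first show that $g\mid f$. Write $f=gq+r$ by division with respect to a normalized $g$, with $r$ of bounded $\Gamma_0$-degree. The relations ${}^\chi g\mid{}^\chi f$ for all $\chi$ force ${}^\chi r=0$ for all $\chi$, hence $r=0$ by Monsky's density theorem. Symmetrically, $f\mid g$, so $(f)=(g)$. I expect the main obstacle to be this divisibility step. The difficulty is that $\Lambda_{\Gamma'}$ is a multivariable ring, so a Weierstrass preparation in the $\Gamma_0$-variable needs $g$ to be ``$\Gamma_0$-distinguished'' modulo the maximal ideal of $\Lambda_{\Gamma'}$. The $\mu$-hypothesis, after pulling out $p^\mu$, gives exactly that $g \bmod \mathfrak m_{\Gamma'}$ is nonzero in $\F_p[[\Gamma_0]]$, which should suffice.

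Alternatively, for the symmetry I would use the functional equations \eqref{e:agfe} and \eqref{e:anfe}, or simply compare $\lambda$-invariants along $\Gamma_0$, which are equal for ${}^\chi f$ and ${}^\chi g$. Finally, the case $\chi$ with ${}^\chi g=0$ is excluded on a Monsky-open dense set, and density suffices throughout.
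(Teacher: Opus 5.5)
Your proposal is correct, but it is organized differently from the paper's proof.

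\textbf{The paper's route.} The paper first settles $d=2$ by feeding Theorem~\ref{t:1st} into Lemma~\ref{l:root}. That lemma splits off $h=\gcd(f,g)$, checks that the coprime parts inherit the $\mu$-condition, and uses the common-$\mathscr P$-root criterion of Lemma~\ref{l:intpts} to kill their Weierstrass polynomials. For $d>2$ it argues by induction. It applies the main conjecture, through Corollary~\ref{c:5.2.2}, to every $\Z_p^{d-1}$-extension $L'$ with $L_0\subset L'\subset L$, and deduces $p^L_{L'}(\xi_0)=p^L_{L'}(\eta_0)$ for the distinguished polynomials. It concludes because $\xi_0-\eta_0$ is divisible by infinitely many pairwise coprime elements $s'_1$.

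\textbf{Your route.} You obtain $({}^\chi f)=({}^\chi g)$ for every $\chi\in\hat\Gamma'$ in one stroke, and this is legitimate. Indeed, $\chi$ has the form $x\mapsto\zeta^{\ell(x)}$ with $\ell$ a primitive linear form, so it factors through a $\Z_p^2$-extension $L_0L_1\subset\tilde L$. Theorem~\ref{t:1st} there transports to $\tilde L$ for four reasons: ${}^\chi\circ p^{\tilde L}_{L_0L_1}={}^\chi$; $\dag_{A/\tilde L}=1$; $\varrho_{\tilde L/L_0L_1}=(1)$; and the same $\vartheta_{\tilde L/L_0L_1}$ occurs in \eqref{arithspf} and \eqref{anspf}. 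You then finish with a single Weierstrass division in $t_0$ over $\Lambda_{\Gamma'}$, plus the fact that a nonzero element of $\Lambda_{\Gamma'}$ cannot vanish at every character.

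\textbf{Comparison.} Your approach buys a uniform, induction-free argument that never needs Lemma~\ref{l:intpts}. The paper's approach, in exchange, needs only infinitely many $\zeta$ when $d=2$. In higher rank it only ever invokes the main conjecture over $\Z_p^{d-1}$-subextensions.

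\textbf{A simplification.} Once the hypothesis gives $f=p^m u_1\xi_0$ and $g=p^m u_2\eta_0$, you have ${}^\chi g\neq 0$ for every $\chi$. So your caveat about characters with ${}^\chi g=0$ is vacuous. Moreover, $({}^\chi\xi_0)=({}^\chi\eta_0)$ already forces ${}^\chi\xi_0={}^\chi\eta_0$ by uniqueness of distinguished polynomials. Hence the coefficients of $\xi_0-\eta_0$ vanish at all characters, and no separate symmetry or $\lambda$-invariant step is needed.

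\textbf{Two small omissions.}
\begin{enumerate}
\item Descending from $\tilde L$ to $L$ means cancelling $\vartheta_{\tilde L/L}$. So you need $\vartheta_{\tilde L/L}\neq 0$, not only $p^{\tilde L}_L(\dag_{A/\tilde L})\neq 0$. Both follow from Lemma~\ref{l:5.2.2}, since $\Gamma'_v\neq 0$ for $v\in S$.
\item Theorem~\ref{t:1st} rests on finiteness of $A(\tilde L)[p^\infty]$. For isotrivial $A$ this can fail after adjoining $L_0$, and that situation has to go through the twisting argument of \S\ref{s:isotrivial}.
\end{enumerate}
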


The proof of Theorem~\ref{t:IMC-hyp} is given in \S\ref{s:imc}, where we combine the
$\chi$-formula of Theorem~\ref{t:1st} with the functional equation, specialization formulae, and
restriction compatibilities recalled above. 

For proving the theorem, a few reductions can be made. At first, in view of Remark \ref{r:rk} (1) and \eqref{e:nontor}, we shall assume that $L\not=L_0$ and $X_L$ torsion.

Also, since if \eqref{e:imc} holds for $A/\tilde L/K$, then it also holds for $A/L/K$ (see the first paragraph of \S\ref{su:induction}), we shall assume that $L=\tilde L$. The assumption implies $X_L$ is actually
torsion \cite[Theorem 2]{tan13} and \cite{ot09}.  Under this assumption, the $d=1$ case is excluded (and is already proven), while the $d=2$ case, as previously stated, follows from Theorem \ref{t:chi} and Theorem \ref{t:1st}.
It also turns out that the heart of the matter already lies in the case $d=2$. Once this case is in hand, the higher-rank case $d>2$ follows by an induction on $d$, in which one compares the two sides after specializing to suitably many intermediate $\Z_p^{d-1}$-extensions and then lifts the resulting equalities back to $\Lambda_\Gamma$ by a `Weierstrass preparation'
argument (see \S\ref{su:d>2}).

Since $X_L$ and $\mathscr L_{A/L}$ have the 
same $\mu$-invariant, for proving \eqref{e:imc}, it is convenient to work over $\Q_p\Lambda_\Gamma=\Lambda_\Gamma[1/p]$ instead, in other words, \eqref{e:imc} is equivalent to
\begin{equation}\label{e:imc'}
\mathrm{CH}_{\Q_p\Lambda_\Gamma}(\Q_pX_L)=\mathscr L_{A/L}\cdot \Q_p\Lambda_\Gamma.
\end{equation}

Finally, to emphasize that the unramified $\mu$-minimality hypothesis is far from vacuous, we establish in the Appendix
a result when $p>3$.
Using \cite[Theorem~6.3.1]{lst21} (together with the fact that $\deg(\Delta)$ is divisible by
$12$, cf.\ \cite[(9)]{tan95}), we show that for $n\gg 0$, the locus of elliptic curves over $K\otimes_{\F_q}\overline{\F}_q$ with global discriminant of degree $12n$ that are semistable everywhere and satisfy `$\mu=0$' is Zariski open and dense.
 
\subsection{Notations}\label{su:not} 
Let $S$ denote the ramification locus of $L/K$. Let $L'/K$ be an intermediate $\Z_p^e$-extension of $L/K$,
write $\Psi$ for $\Gal(L/L')$, and let $S'$ denote the ramification locus.

Let $\F_q$ be the constant field of $K$. In general, for a place $w$ of an algebraic extension $K'$ of $K$, let $\F_w$ denote the residue field and write $q_w:=|\F_w|$. If $K'/K$ is unramified at a place $v$, let $[v]_{K'/K}\in\Gal(K'/K)$ denote the Frobenius element.

\subsubsection{The global factor $\varrho_{L/L'}$}\label{ss:gbfactor} Since 
$A_{p^\infty}(L)$ is finite, by \cite[Definition 2.1.1]{tan24}, 
\begin{equation}\label{e:varrho}
\varrho_{L/L'}=
\begin{cases}
(1), & \text{if}\; e\geq 1;\\
(|A_{p^\infty}(K)|)^2, & \text{if}\; e=0. \\
\end{cases}
\end{equation}
This implies that if $L'\subsetneq L''\subsetneq L$, where $L''/K$ is an $\Z_p^f$-extension, then \begin{equation}\label{e:e<f}
\varrho_{L/L'}=p^{L''}_{L'}(\varrho_{L/L''})\cdot \varrho_{L''/L'}.
\end{equation}

\subsubsection{Local factors $\vartheta_{L/L',v}$}\label{ss:locfactor}
The factor
$\vartheta_{L/L'}$ is defined to be the product $\prod_v\vartheta_{L/L',v}$ for $v$ running through all places of $K$, and each $\vartheta_{L/L',v}$ is an ideal of $\Lambda_{\Gamma'}$.

Let ${\varPi_v}$ denote the group of the connected components of the
closed fiber (over $\bar\F_v$)
of the N$\acute{\text{e}}$ron model of $A/K_v$ and put
$$m_v:=|\varPi_v^{\Gal(\bar\F_v/\F_v)}|.$$

Suppose $A$ has good ordinary reduction $\bar A$ at $v$. Then $\bar A_{p^\infty}(\bar\F_v)$ is a $p$-divisible group of co-rank $1$ and the
action of the Frobenius substitution $\Frob_v\in \Gal(\bar\F_v/\F_v)$ on the Tate module $\mathrm{T}(\bar A_{p^\infty}(\bar\F_v))\simeq \Z_p$ has an eigenvalue $\alpha_v\in\Z_p^\times$.
Also, \cite[Corollary 4.37]{maz} states that the eigenvalues of the Frobenius endomorphism
$\tF_v:{\bar A}\longrightarrow {\bar A}$ over $\F_v$ are $\alpha_v$ and $\beta_v=q_v/\alpha_v$.
Hence $\alpha_v$ is a Weil $q_v$-number of weight 1.
In particular,
\begin{equation}\label{e:weilnumber}
\alpha_v^m\not=1,\;\;\text{for every}\;\; m\in\Z.
\end{equation}

Suppose $A$ has split multiplicative reduction at $v$. Let $Q_v\in K_v^*$ be the local Tate period such that
$A({\bar K}_v)\simeq \bar K_v^*/Q_v^\Z$. Consider the composition
$$\xymatrix{\mathcal R_v:Q_v^\Z \ar@{^(->}[r] & K_v^* \ar[r]^-{\mathsf R_v}  & \Gamma_v,}$$
where $\mathsf R_v$ is the local reciprocity map, and let $\overline{\mathcal R_v(Q_v^\Z)}\subset \Gamma_v$
denote the closure of $\mathcal R_v(Q_v^\Z)$ in the $p$-adic topology. Put
$$\mathfrak{w}_v=\mathrm{CH}_{\Z_p}(\Gamma_v/\overline{\mathcal R_v(Q_v^\Z)}),$$
which is zero when $d\geq 2$.

The local factor $\vartheta_{L/L',v}$ is defined as follows:
\begin{enumerate}
\item[(a)] Suppose $v\not\in S$. If  $\Psi_v\not=0$, $\vartheta_{L/L',v}=(m_v);$
otherwise, $\vartheta_{L/L',v}=(1)$.

\item[(b)] Suppose $v\in S$ and $A$ has good ordinary reduction at $v$. Then
$$\vartheta_{L/L',v}=\begin{cases}
(1-\alpha_v^{-1}\cdot [v]_{L'/K})\times
 (1-\alpha_v^{-1}\cdot [v]_{L'/K}^{-1}), & \text{ if } v\not\in S';\\
 (1), & \text{ otherwise}.
\end{cases}
 $$ 

\item[(c)] Suppose $v\in S$ and $A$ has split multiplicative reduction at $v$.
Then
$$
\vartheta_{L/L',v}=\begin{cases}
\Lambda_{\Gamma'}\cdot \mathfrak{w}_v, & \text{if}\ \ \Psi_v\not=0,\;\text{and}\;\; \Gamma'_v=0;\\
(\sigma_v-1), & \text{if}\ \ \Psi_v\not=0, \;\text{and}\;\; \Gamma'_v \;\text{is topologically generated by}\; \sigma_v
;\\
(1),& \text{otherwise.}
\end{cases}
$$
\item[(d)] Suppose $v$ is a non-split multiplicative place in $S$. Then
$$
\vartheta_{L /L^{\prime}, v}:= 
\begin{cases}
(2 m_{v}), & \text { if } \Psi_{v} \simeq \mathbb{Z}_{p}^{f}, f \geq 2,\Gamma_{v}^{\prime}=0,  
\\ 
(2 m_v), & \text{ if }  \Psi_{v} \simeq \mathbb{Z}_{p}, \Gamma_{v}^{\prime}=0,  \mathbb{F}_{q_{v}^{2}} \not \subset L_{v} ; \\
(m_{v}), & \text { if } \Psi_{v} \simeq \mathbb{Z}_{p},\Gamma_{v}^{\prime}=0,  \mathbb{F}_{q_{v}^{2}} \subset L_{v} ; 
\\ (1+[v]_{L^{\prime}/K}), & \text { if } \Psi_{v} \neq 0, v\not\in S', \Gamma_{v}^{\prime}\simeq \Z_p;  \\ 
(1+\sigma_v), & \text { if } \Psi_{v} \neq 0, v \in S', \Gamma'_v \;\text{is topologically generated by}\; \sigma_v ,  \mathbb{F}_{q_{v}^{2}} \subset L_{v}^{\prime} ; \\ 
(1), & \text { otherwise. }
\end{cases}
$$
\end{enumerate}

It follows directly from the above that if $L'\subsetneq L''\subsetneq L$, and $L''/K$ a $\Z_p^f$-extension, then 
\begin{equation}\label{e:thetaef}
\vartheta_{L/L'}=p^{L''}_{L'}(\vartheta_{L/L''})\cdot \vartheta_{L''/L'}.
\end{equation} 

Note that if $L_0\subset L$ and $L'=L_0$, then $S'=S$, $\Psi_v=0$, for $v\not\in S$, and $\Gamma'_v\not=0$, for $v\in S$. This implies that $\vartheta_{L/L'}$ is not divisible by $p$.


\subsubsection{The element $\dag_{A/L}$}\label{ss:dag} 
Let $S_{1} \subset S$ denote the subset consisting of places $v$ such that $\Gamma_{v} \simeq \mathbb{Z}_{p}$. For such $v$, we fix a topological generator $\sigma_{v}$ of $\Gamma_{v}$.
The element $\dag_{A/L}=\prod_v \dag_{A/L,v}$, where for each place $v$ of $K$, 
$\dag_{A/L, v} \in \Lambda_\Gamma$ is defined as follows:

\begin{enumerate}

\item $\dag_{A / L, v}=m_v$, if $v \notin S$ and $\Gamma_{v}$ is trivial.

\item $\dag_{A / L, v}=\lambda_{v}-\sigma_{v}$, if $v \in S_{1}$ and $A$ has split multiplicative reduction at $v$, or $A$ has non-split multiplicative reduction at $v\in S_{1}$  and $\F_{q_v^2}\subset L_v$.

\item $\dag_{A / L, v}=1$, otherwise.
\end{enumerate}

\begin{remark}\label{r:dag} The definition of $\dag_{A/L,v}$, $v\in S_1$, depends on the choice of $\sigma_v$,
while a different choice changes $\dag_{A/L,v}$ by multiplication by a unit in $\Lambda_\Gamma$, so that the ideal $(\dag_{A/L,v})$
is well-defined. It is known that $\dag_{A/L}\cdot \mathscr L_{A/L}$ is independent of such choices, see \cite[Definition 3.4.1]{tan24}. When $L_0\subset L$, it follows that $S_1=\emptyset$, and such ambiguity disappears.

\end{remark}

\subsubsection{The fudge factor $\star_\omega$}\label{ss:fudge} 

Let $\omega\in\hat\Gamma$ and denote by $D_\omega$ its global Artin conductor. Let $S_o\subset S$ (resp. $S_m\subset S$) denote the subset consisting of places where $A$ has good ordinary (resp. multiplicative) reduction. For $v\in S_o$ let $\alpha_v$ be the eigenvalue in
\S\ref{ss:locfactor}. For $v\in S_m$,
let $\lambda_v=1$, if $A$ has split multiplicative reduction at $v$; otherwise, put $\lambda_v=-1$. 

We have
\begin{equation}\label{e:fudge}
\star_\omega=\omega(\dag_{A / L})^{-1} \cdot \alpha_{D_{\omega}}^{-1} \cdot \tau_{\omega} \cdot q^{\frac{\deg(\Delta)}{12}+\kappa-1} \cdot \Xi_{S, \omega},
\end{equation}
where
\begin{enumerate}
\item 
$$\alpha_{D_\omega}:=\prod_{v\in S_o}\alpha_v^{\ord_v D_\omega}\cdot \prod_{v\in S_m\cap \mathrm{Supp}(D_\omega)}\lambda_v^{\ord_v D_\omega-1}.$$
\item $\tau_\omega$ is the Gauss sum defined in \cite[\S 3.1.2]{tan24}.
\item $\Delta$ is the global discriminant of $A/K$ and $\kappa$ is the genus of $K$.
\item According to \cite[\S 3.2.2]{tan24},
$$\Xi_{S,\omega}=\prod_{\substack{v \in S_{m} \\ v \notin \mathrm{Supp}(D_{\omega})}}(\lambda_{v}-\omega([v]_{L^{Ker(\omega)}/K})^{-1}) \prod_{\substack{v \in S_{o} \\ v \notin \mathrm{Supp}(D_{\omega})}}(1-\alpha_{v}^{-1} \omega([v]_{L^{Ker(\omega)}/K}))(1-\alpha_{v}^{-1} \omega([v]_{L^{Ker(\omega)}/K})^{-1}).
$$
\end{enumerate}

\subsection*{Organization of the article}
The remainder of the article is organized as follows.

In \S\ref{s:chif} we prove the $\chi$-formulae (Theorems~\ref{t:chi} and~\ref{t:1st}), which provide the
key link between the analytic and algebraic sides of \eqref{e:imc} after twisting and
specializing along the unramified direction.

In \S\ref{s:imc} we deduce the Iwasawa Main Conjecture from these formulae, under the
`unramified $\mu$-minimality hypothesis' of \S\ref{su:hypo}; in particular, we explain the reduction from $d>2$ to the rank-two case.

In \S\ref{s:isotrivial} we treat the remaining isotrivial case, which requires a separate argument.
Here we analyze the relevant torsion subgroups and the associated field of twist, and then
complete the proof of \eqref{e:imc} in this setting.

The Appendix \ref{s:app} establishes a result showing that the `$\mu$-minimality
hypothesis' holds on a Zariski open dense locus in the relevant moduli space.

\section{The $\chi$-formula}\label{s:chif} 

In this section, we prove Theorem \ref{t:chi} and Theorem \ref{t:1st}.  Recall that $L_0':=K'\cdot K^{(p)}_\infty$.
Denote $\Psi_0:=\Gal(L/L_0)$, $\Psi'_0:=\Gal(L/L_0')$.
The restriction of Galois action identifies $G_\infty:=\Gal(L_0'/L_0)$ with $G$.
\usetikzlibrary{arrows.meta}

\begin{figure}[ht]
  \centering
  \begin{tikzpicture}[>=latex, node distance=3cm, font=\small]
    \node (L)   at (0,  0)   {$L$};
    \node (L1)  at (3,  -1)  {$L_1$};
    \node (L0)  at (0, -3)   {$L_0$};
    \node (K)   at (3, -4)   {$K$};

    \node (L')   at (0, -1.5)   {$L_0'$};
    \node (Kchi) at (3, -2.5)   {$K'$};

    \draw[<->, bend left=20] (L)   to node[above] {$\Psi_1$}   (L1);
    \draw[<->, bend left=20] (L')  to node[above] {$\Psi'_1$}  (Kchi);
    \draw[<->, bend left=20] (K)   to node[below] {$\Gamma_0$} (L0);

    \draw[<->, bend left=20] (K)   to node[left]  {$G$}        (Kchi);
    \draw[<->, bend left=20] (L0)  to node[left]  {$\Psi_0$}   (L);

    \draw[<->, bend right=20] (L') to node[right] {$\Psi'_0$}  (L);
    \draw[<->, bend left=20] (L1)  to node[right] {$\Gamma_1$} (K);
    \draw[<->, bend left=20] (L')  to node[right] {$G_\infty$} (L0);

    \draw (L)  -- (L');
    \draw (L') -- (L0);
    \draw (L1) -- (Kchi) -- (K);
    \draw (L') -- (Kchi);
    \draw (L)  -- (L1);
    \draw (L0) -- (K);
  \end{tikzpicture}
  \caption{Field extension diagram}
  \label{fig:field-diagram-setup}
\end{figure}
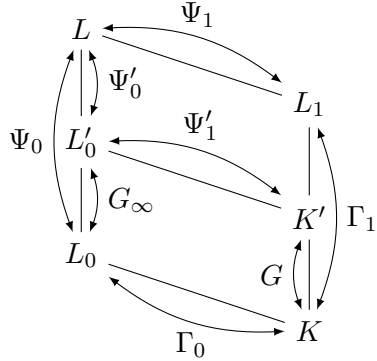

\subsection{The proof of Theorem \ref{t:chi}}\label{su:pftchi}

We will need the following lemmas.

\begin{lemma}\label{LemFin}
Let $\chi:G\to \Q_\chi^\times$ be a character with values in a finite extension
$\Q_\chi/\Q_p$.
Let $M$ be a module over $\Lambda_{\Gamma_0\times G}$.
Assume that $M$ is finitely generated (resp.\ finitely generated and torsion, resp.\ finitely generated and pseudo-null)
as a $\Lambda_{\Gamma_0}$-module. Then $M^{(\chi)}$ is finitely generated (resp.\ finitely generated and torsion, resp.\ trivial)
as a module over $\Q_\chi\Lambda_{\Gamma_0}$.
\end{lemma}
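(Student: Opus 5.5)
The idea is to realize $M^{(\chi)}$ as a direct summand (or image of an idempotent) of $\Q_\chi\otimes_{\Z_p}M$, viewed as a module over $\Q_\chi\Lambda_{\Gamma_0}$. Set $\O_\chi\Lambda_{\Gamma_0}:=\O_\chi\otimes_{\Z_p}\Lambda_{\Gamma_0}$, which is again a complete regular local ring, a power series ring over $\O_\chi$. Then $\Q_\chi\Lambda_{\Gamma_0}=\O_\chi\Lambda_{\Gamma_0}[1/p]$ is a Noetherian regular ring.

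Since $G$ is finite and $p$ is inverted, the element
\[
e_\chi=\frac{1}{|G|}\sum_{g\in G}\chi(g)^{-1}g\in\Q_\chi[G]
\]
is an idempotent in $\Q_\chi[G]$, central in $\Q_\chi\Lambda_{\Gamma_0\times G}$ because everything is commutative. We then check the identification
\[
M^{(\chi)}=M\otimes_{\Z_p[G]}\Q_\chi\;\simeq\;(\Q_\chi\otimes_{\Z_p}M)\otimes_{\Q_\chi[G]}\Q_\chi\;\simeq\;e_\chi(\Q_\chi\otimes_{\Z_p}M),
\]
and these are isomorphisms of $\Q_\chi\Lambda_{\Gamma_0}$-modules, since the $\Gamma_0$-action commutes with $G$. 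Thus $M^{(\chi)}$ is a quotient, indeed a direct summand, of $N:=\Q_\chi\otimes_{\Z_p}M=\Q_\chi\Lambda_{\Gamma_0}\otimes_{\Lambda_{\Gamma_0}}M$. The argument reduces to showing that base change $\Lambda_{\Gamma_0}\to\Q_\chi\Lambda_{\Gamma_0}$ preserves each of the three properties, and that quotients preserve them.

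\textbf{Finite generation.} Generators of $M$ over $\Lambda_{\Gamma_0}$ generate $N$ over $\Q_\chi\Lambda_{\Gamma_0}$, and hence their images under $e_\chi$ generate $M^{(\chi)}$.

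\textbf{Torsion.} If $M$ is torsion, there is a nonzero $f\in\Lambda_{\Gamma_0}$ annihilating $M$, for instance a product of annihilators of the generators. This $f$ stays nonzero in $\Q_\chi\Lambda_{\Gamma_0}$, because $\O_\chi$ is free over $\Z_p$ and so $\Lambda_{\Gamma_0}\to\O_\chi\Lambda_{\Gamma_0}$ is injective and flat. Therefore $f$ annihilates $N$ and its quotient $M^{(\chi)}$.

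\textbf{Pseudo-null.} Here $\Lambda_{\Gamma_0}\simeq\Z_p[[T]]$ has dimension $2$. A finitely generated pseudo-null module is then finite, so it is killed by some power $p^n$. The image of $p^n$ is a unit in $\Q_\chi\Lambda_{\Gamma_0}$, hence $N=0$ and so $M^{(\chi)}=0$. Equivalently, the annihilator ideal has height at least $2$, so it contains two coprime elements; since the only height-two prime is $(p,T)$, the ideal contains a power of $p$, which becomes a unit after inverting $p$.

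The only step needing care is verifying that the tensor identification is compatible with the $\Lambda_{\Gamma_0}$-structures. This is formal, since the $G$- and $\Gamma_0$-actions commute. The real content is the use of $\dim\Lambda_{\Gamma_0}=2$ in the pseudo-null case, which gives that pseudo-null means finite.
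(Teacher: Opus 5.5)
Your proof is correct and follows essentially the same route as the paper. Both arguments realize $M^{(\chi)}$ as a quotient of $\Q_\chi\otimes_{\Z_p}M$, transfer finite generation and torsion along $\Lambda_{\Gamma_0}\to\Q_\chi\Lambda_{\Gamma_0}$, and use $\dim\Lambda_{\Gamma_0}=2$ to see that a pseudo-null module is finite and so vanishes after tensoring with $\Q_\chi$. Your idempotent $e_\chi$ (showing $M^{(\chi)}$ is even a direct summand) and your single global annihilator are harmless refinements, and the latter makes the torsion step slightly cleaner than the paper's element-by-element remark.
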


\begin{proof}
Let \( R := \Lambda_{\Gamma_0} \).
There is a natural surjection
\[
M \otimes_{\mathbb{Z}_p} \Q_\chi \longrightarrow M^{(\chi)}
\]
given by \( m \otimes \lambda \mapsto m \otimes \lambda \), whose kernel is the \( \Q_\chi \)-submodule generated by the relations
\[
g \cdot m \otimes \lambda - m \otimes \chi(g)\lambda, \quad \text{for all } g \in G, m \in M, \lambda \in \Q_\chi.
\]

Now, since \( M \) is finitely generated over \( R \), the scalar extension \( M \otimes_{\mathbb{Z}_p} \Q_\chi \) is finitely generated over \( R\otimes_{\mathbb{Z}_p}\Q_\chi\), and the quotient \( M^{(\chi)} \) is therefore also finitely generated over this ring.

If \( M \) is torsion over \( R \), then for each \( m \in M \), there exists a nonzero \( f \in R \) such that \( f \cdot m = 0 \). It follows that \( f \cdot (m \otimes 1) = 0 \) in \( M^{(\chi)} \), so \( M^{(\chi)} \) is torsion over \( \Q_\chi \otimes R \).

Finally, if $M$ is pseudo-null, then it is a finite group since $\Lambda_{\Gamma_0}\simeq \Z_p[[T]]$ has Krull dimension 2 as a ring so that  \( M \otimes_{\mathbb{Z}_p} \Q_\chi=0 \). Hence, its quotient \( M^{(\chi)} \) must be also trivial. \qedhere
\end{proof}

\begin{lemma}\label{crys-descent}
Let \( C/\mathbb{F}_q \) be a smooth proper curve, \( Z \subset C \) a divisor with normal crossings, and let \( f: C' \to C \) be a finite Galois cover with group \( G \), ramified only over \( Z \). Let \( C'^\sharp \) and \( C^\sharp \) denote the log-smooth schemes associated with \( Z' := f^{-1}(Z) \) and \( Z \), respectively.

Let \( E \) be a finite locally free $F$-log-crystal on \( C^\sharp/W(\mathbb{F}_q) \). Consider the tower of constant field extensions:
\[
C_n^{'\sharp} := C^{'\sharp} \times_{\mathbb{F}_q} \mathbb{F}_{q^{p^n}}, \qquad C_\infty^{'\sharp} := C^{'\sharp} \times_{\mathbb{F}_q} \mathbb{F}_{q^{p^\infty}},
\]
with Galois group \( \Gamma_0 \). Let \( N \in \mathbb{Z}_{\geq 0} \) and write \( C_N^{'\sharp} := C^{'\sharp} \times_{\mathbb{F}_q} \mathbb{F}_{q^{p^N}} \).

Define:
\begin{align*}
P^i_\infty &:= \coh^i \left( \varinjlim_n R\Gamma_{\log\rm{-crys}}(C'_n{}^\sharp/W(\mathbb{F}_{q^{p^n}}), E) \otimes^{\mathbb{L}} \mathbb{Q}_p/\mathbb{Z}_p  \right)^\vee, \\
P^i_N &:= \coh^i\left(  R\Gamma_{\log\rm{-crys}}(C'_N{}^\sharp/W(\mathbb{F}_{q^{p^N}}), E) \otimes^{\mathbb{L}} \mathbb{Q}_p/\mathbb{Z}_p  \right)^\vee.
\end{align*}

Then \( P^i_\infty \) is naturally a \( \Lambda_{\Gamma_0 \times G} \)-module, and \( P^i_N \) is a \( \mathbb{Z}_p[\Gamma_{0,N} \times G] \)-module, where $\Gamma_{0,N}$ is the finite quotient of $\Gamma_0$ isomorphic to $\mathbb{Z}/p^N$. Let \( \psi = \omega \cdot \chi \in \widehat{\Gamma_0 \times G} \) be a finite-order character factoring through \( \Gamma_{0,N} \times G \), so that \( \mathbb{Q}_\psi := \mathbb{Q}_\omega \cdot \mathbb{Q}_\chi \).

Then, we have a canonical Frobenius-equivariant isomorphism:
\[
(P^i_\infty)^{(\psi)} := P^i_\infty \otimes_{\mathbb{Z}_p[[\Gamma_0 \times G]]} \mathbb{Q}_\psi
\cong
P^i_N \otimes_{\mathbb{Z}_p[\Gamma_{0,N} \times G]} \mathbb{Q}_\psi =: (P^i_N)^{(\psi)}.
\]
\end{lemma}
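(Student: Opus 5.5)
We will deduce the isomorphism from a Hochschild--Serre type descent along $\Gamma_0$, and then exploit the fact that $\Q_\psi$ is a flat (in fact projective) module over the semisimple algebra $\Q_p[\Gamma_{0,N}\times G]$. Write $\Gamma_0^{(N)}:=\Gal(\F_{q^{p^\infty}}/\F_{q^{p^N}})\simeq\Z_p$, so that $\Gamma_{0,N}=\Gamma_0/\Gamma_0^{(N)}$. Put
\[
\mathcal C_\infty:=\varinjlim_n R\Gamma_{\log\text{-crys}}(C'_n{}^\sharp/W(\F_{q^{p^n}}),E)\otimes^{\mathbb L}\Q_p/\Z_p,
\]
and let $\mathcal C_N$ denote the corresponding finite-level complex. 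The first step is the finite-étale base change for log-crystalline cohomology. The covering $C'^\sharp_n\to C'^\sharp_N$ is a constant field extension, so it is Galois with group $\Gamma_0^{(N)}/\Gamma_0^{(n)}$. Since $W(\F_{q^{p^n}})$ is finite étale over $W(\F_{q^{p^N}})$, base change gives $R\Gamma(C'^\sharp_n)\simeq R\Gamma(C'^\sharp_N)\otimes_{W(\F_{q^{p^N}})}W(\F_{q^{p^n}})$, compatibly with the $G$-action, the Frobenius and the Galois action. Passing to the limit and applying Galois descent for the $\Gamma_0^{(N)}$-action, which is just Hilbert 90 for the Witt vectors in the étale setting, we obtain a Frobenius- and $G$-equivariant quasi-isomorphism
\[
R\Gamma\bigl(\Gamma_0^{(N)},\mathcal C_\infty\bigr)\simeq \mathcal C_N .
\]
We note that the $\Lambda_{\Gamma_0\times G}$-module structure on $P^i_\infty$ comes from functoriality: $G$ acts through $C'$, and $\Gamma_0$ acts through the constant field.

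The second step applies this descent to the Pontryagin duals. Since $\Gamma_0^{(N)}\simeq\Z_p$ has cohomological dimension $1$, the quasi-isomorphism above yields, for each $i$, a short exact sequence
\[
0\to (P^{i+1}_\infty)^{\Gamma_0^{(N)}}\to P^i_N\to (P^i_\infty)_{\Gamma_0^{(N)}}\to 0,
\]
with the outer terms dualized appropriately. Here $(\,\cdot\,)_{\Gamma_0^{(N)}}$ denotes coinvariants, which is the dual of the $H^0$ term, while the $H^1$ term dualizes to the invariants of the next degree. These sequences are equivariant for $\Gamma_{0,N}\times G$ and for Frobenius. Now $P^i_\infty\otimes_{\Lambda_{\Gamma_0\times G}}\Q_\psi=(P^i_\infty)_{\Gamma_0^{(N)}}\otimes_{\Z_p[\Gamma_{0,N}\times G]}\Q_\psi$, because $\psi$ factors through $\Gamma_{0,N}\times G$. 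Moreover, $-\otimes_{\Z_p[\Gamma_{0,N}\times G]}\Q_\psi$ is exact: after inverting $p$ the group algebra is semisimple and $\Q_\psi$ is a direct summand of it. Tensoring the sequence with $\Q_\psi$ therefore gives
\[
0\to \bigl((P^{i+1}_\infty)^{\Gamma_0^{(N)}}\bigr)^{(\psi)}\to (P^i_N)^{(\psi)}\to (P^i_\infty)^{(\psi)}\to 0 .
\]

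The third step, and the main obstacle, is to show that the left-hand term vanishes. To do this we use that $P^{i+1}_\infty$ is finitely generated over $\Lambda_{\Gamma_0}$; this follows from the finiteness of $P^{i+1}_N$ together with the descent sequence and Nakayama's lemma. Let $\gamma$ be a generator of $\Gamma_0^{(N)}$. Then $(P^{i+1}_\infty)^{\Gamma_0^{(N)}}$ is the kernel of $\gamma-1$. We decompose $\Q_pP^{i+1}_\infty$ as a $\Q_p\Lambda_{\Gamma_0}$-module, using the structure theorem together with Lemma~\ref{LemFin} (which discards the pseudo-null part after tensoring). The $\psi$-part of $\ker(\gamma-1)$ is nonzero only if $\omega$ is a zero of the characteristic power series of the torsion part, or if there is a free part.

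On the other hand, $\ker(\gamma-1)$ equals its own $\Gamma_0^{(N)}$-coinvariants up to finite groups, and its $\psi$-part is detected by $\ker$ and $\operatorname{coker}$ of $\gamma-1$ simultaneously. This lets us run the argument in both directions. The term $(P^{i+1}_\infty)^{\Gamma_0^{(N)}}$ is the kernel of the map $(P^{i+1}_N)\to$ (next coinvariants) from the sequence in degree $i+1$. Comparing $\Q_\psi$-dimensions in the sequences for all $i$, together with the Euler-characteristic identity $\sum(-1)^i\dim(P^i_N)^{(\psi)}=\sum(-1)^i\operatorname{rank}$, forces all connecting terms to be zero.

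If this dimension count turns out to be insufficient, the fallback is to use Frobenius weights. The $\psi$-isotypic pieces of the invariants and coinvariants of $\gamma-1$ carry the same Frobenius eigenvalues, and the maps in the sequence are built from $\gamma=\Frob^{p^N}$-type operators. One would then try to show directly that the composite connecting map $\bigl((P^{i+1}_\infty)^{\Gamma_0^{(N)}}\bigr)^{(\psi)}\to(P^i_N)^{(\psi)}$ is zero, because $\gamma$ acts by $\omega(\gamma)=1$ on the $\psi$-part. Either route gives the canonical Frobenius-equivariant isomorphism $(P^i_N)^{(\psi)}\cong(P^i_\infty)^{(\psi)}$.
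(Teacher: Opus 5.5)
There is a genuine gap, and it sits in your third step.

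\textbf{The descent sequence.} The Hochschild--Serre bookkeeping in your second step needs correcting. For $\Gamma_0^{(N)}\simeq\Z_p$ the spectral sequence gives $0\to\coh^1(\Gamma_0^{(N)},\coh^{i-1}(\mathcal C_\infty))\to\coh^i(\mathcal C_N)\to\coh^i(\mathcal C_\infty)^{\Gamma_0^{(N)}}\to0$. Dualizing yields
\[
0\to (P^i_\infty)_{\Gamma_0^{(N)}}\to P^i_N\to (P^{i-1}_\infty)^{\Gamma_0^{(N)}}\to 0 .
\]
So the error term is the invariants of $P^{i-1}_\infty$, and it appears as a quotient, not a submodule.

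\textbf{The gap.} Nothing you write shows that $\bigl((P^{i-1}_\infty)^{\Gamma_0^{(N)}}\bigr)^{(\psi)}=0$.
\begin{enumerate}
\item The dimension count is vacuous. Let $M$ be any finitely generated module over the principal ideal domain $\Q_\psi\Lambda_{\Gamma_0}$, let $\gamma_0$ generate $\Gamma_0$, and set $f:=\gamma_0-\omega(\gamma_0)$. Then $\dim M/fM-\dim M[f]=\operatorname{rank}M$. Hence your alternating-sum identity holds whether or not the invariant terms vanish.
\item A free part contributes nothing to $\ker(\gamma-1)$; only torsion can.
\item The Frobenius-weight fallback is not an argument. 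That $\gamma$ acts trivially at level $N$ does not make any map zero.
\item No abstract argument can work. For example, $M=\Lambda_{\Gamma_0\times G}/(\gamma-1)$ satisfies $\bigl(M^{\Gamma_0^{(N)}}\bigr)^{(\psi)}\neq 0$. The vanishing must therefore come from the specific shape of $P^{i-1}_\infty$.
\end{enumerate}

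\textbf{The missing idea.} For a constant-field tower this shape is explicit, and you already have the tools from your first step. Write $W_n:=W(\F_{q^{p^n}})$.
\begin{itemize}
\item Flat base change, together with flatness of $W_n$ over $W_N$, gives $\coh^j(\mathcal C_\infty)\cong\varinjlim_n \coh^j(\mathcal C_N)\otimes_{W_N}W_n$, with $\Gamma_0^{(N)}$ acting only on $W_n$.
\item $W_n$ is free of rank one over $W_N[\Gal(\F_{q^{p^n}}/\F_{q^{p^N}})]$ (integral normal basis for unramified extensions). So each layer is an induced module.
\item Therefore $\coh^1(\Gamma_0^{(N)},\coh^{i-1}(\mathcal C_\infty))=\varinjlim_n\coh^1\bigl(\Gal(\F_{q^{p^n}}/\F_{q^{p^N}}),\coh^{i-1}(\mathcal C_N)\otimes_{W_N}W_n\bigr)=0$, that is, $(P^{i-1}_\infty)^{\Gamma_0^{(N)}}=0$.
\end{itemize}
Hence $(P^i_\infty)_{\Gamma_0^{(N)}}\cong P^i_N$, even integrally and compatibly with $G$ and Frobenius by functoriality of the edge map. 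Your exactness of $-\otimes_{\Z_p[\Gamma_{0,N}\times G]}\Q_\psi$ then finishes the proof.

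This control isomorphism, in the form $P^i_\infty\otimes_{\Lambda_{\Gamma_0}}\Q_p[\Gamma_{0,N}]\cong P^i_N[1/p]$, is exactly what the paper imports from the proof of Lemma~3.1.2 of \cite{lltt16}. The paper then only checks $G$-equivariance and applies the tensor identity $\bigl(M\otimes_{\Lambda_{\Gamma_0}}\Q_p[\Gamma_{0,N}]\bigr)\otimes_{\Q_p[\Gamma_{0,N}\times G]}\Q_\psi\cong M\otimes_{\Lambda_{\Gamma_0\times G}}\Q_\psi$.
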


\begin{proof}

First, note that by the proof of \cite[Lemma 3.1.2]{lltt16}, we have a Frobenius and $\Gamma_{0,N}$ equivariant isomorphism 
\[
P^i_\infty \otimes_{\Lambda_{\Gamma_0}} \mathbb{Q}_p[\Gamma_{0,N}] \xrightarrow{\sim} P^i_N[1/p].
\]
This isomorphism is moreover \( G \)-equivariant. Indeed, the cover \( f: C' \to C \) is a finite Galois cover with group \( G \), so each \( C_n^{'\sharp} \) admits a canonical right action by \( G \) via log-scheme automorphisms over \( C \).
The log-crystalline cohomology is functorial under log-scheme automorphisms. Thus, the action of \( G \) on each \( C_n^{'\sharp} \) induces an action on the cohomology groups. The tower \( \{C_n^{'\sharp}\} \) is formed by constant field extensions \( \mathbb{F}_{q^{p^n}}/\mathbb{F}_q \), which are linearly disjoint from the extension \( C'/C \), so the actions of \( \Gamma_0 \) and \( G \) commute. Hence, the transition maps in the inverse system defining \( P^i_\infty \) are \( G \)-equivariant, and so the inverse limit inherits a natural \( G \)-action. Finally, the scalar extension from \( \Lambda_{\Gamma_0} \) to \( \mathbb{Q}_p[\Gamma_{0,N}] \) affects only the \( \Gamma_0 \)-action, and leaves the \( G \)-structure untouched and we can conclude the required $G$-equivariance.

By tensoring both sides of the $\Gamma_{0,N} \times G$-equivariant isomorphism $P^i_\infty \otimes_{\Lambda_{\Gamma_0}} \mathbb{Q}_p[\Gamma_{0,N}] \xrightarrow{\sim} P^i_N[1/p]$ with $\otimes_{\mathbb{Q}_p[\Gamma_{0,N} \times G]} \mathbb{Q}_\psi$ and applying the next lemma to the left-hand side, we obtain the claimed isomorphism:
\[
(P^i_\infty)^{(\psi)} \xrightarrow{\sim} (P^i_N)^{(\psi)}.
\]

\end{proof}

  \begin{lemma}
Let $M$ be a $\Lambda_{\Gamma_0 \times G}$-module. Set \[
N \;:=\; M \otimes_{\Lambda_{\Gamma_0}} \mathbb{Q}_p[\Gamma_{0,N}], \]
where $M$ is viewed as a $\Lambda_{\Gamma_0}$-module by restriction of scalars. Then $N$ has a natural $\mathbb{Q}_p[\Gamma_{0,N} \times G]$-module structure given by
\[
(\bar\gamma,h)\cdot(m\otimes \bar\gamma') \;=\; (h\cdot m)\otimes \bar\gamma\,\bar\gamma'\qquad(\bar\gamma,\bar\gamma'\in\Gamma_{0,N},\ h\in G,\ m\in M).
\]
Let $\psi:\Gamma_{0,N} \times G \to \mathbb{Q}_\psi^*$ be a character. Then there is a canonical isomorphism
\[
M \otimes_{\Lambda_{\Gamma_0 \times G}} \mathbb{Q}_\psi
\;\cong\;
N \otimes_{\mathbb{Q}_p[\Gamma_{0,N}\times G]} \mathbb{Q}_\psi,
\]
where in both tensor products the right tensor factor is taken via the character $\psi$.
\end{lemma}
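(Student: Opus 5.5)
The plan is to reduce the statement to associativity of tensor products. The map $\Lambda_{\Gamma_0\times G}\to\Q_\psi$ factors as
\[
\Lambda_{\Gamma_0\times G}\longrightarrow \Q_p[\Gamma_{0,N}\times G]\longrightarrow \Q_\psi,
\]
because $\psi$ factors through $\Gamma_{0,N}\times G$ and $\Q_\psi$ is a $\Q_p$-vector space. Associativity then gives
\[
M\otimes_{\Lambda_{\Gamma_0\times G}}\Q_\psi\cong \bigl(M\otimes_{\Lambda_{\Gamma_0\times G}}\Q_p[\Gamma_{0,N}\times G]\bigr)\otimes_{\Q_p[\Gamma_{0,N}\times G]}\Q_\psi .
\]
So it suffices to produce a canonical isomorphism of $\Q_p[\Gamma_{0,N}\times G]$-modules
\[
M\otimes_{\Lambda_{\Gamma_0\times G}}\Q_p[\Gamma_{0,N}\times G]\;\cong\; M\otimes_{\Lambda_{\Gamma_0}}\Q_p[\Gamma_{0,N}]=N,
\]
where $N$ carries the module structure described in the statement.

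To build this isomorphism I would use the identifications
\[
\Lambda_{\Gamma_0\times G}=\Lambda_{\Gamma_0}\otimes_{\Z_p}\Z_p[G],\qquad \Q_p[\Gamma_{0,N}\times G]=\Q_p[\Gamma_{0,N}]\otimes_{\Z_p}\Z_p[G],
\]
both valid since $G$ is finite. The latter ring is therefore the base change
\[
\Q_p[\Gamma_{0,N}\times G]=\Lambda_{\Gamma_0\times G}\otimes_{\Lambda_{\Gamma_0}}\Q_p[\Gamma_{0,N}].
\]
Substituting and using associativity once more gives
\[
M\otimes_{\Lambda_{\Gamma_0\times G}}\bigl(\Lambda_{\Gamma_0\times G}\otimes_{\Lambda_{\Gamma_0}}\Q_p[\Gamma_{0,N}]\bigr)\cong M\otimes_{\Lambda_{\Gamma_0}}\Q_p[\Gamma_{0,N}].
\]
Explicitly, the map sends $m\otimes(\bar\gamma,h)\mapsto (hm)\otimes\bar\gamma$. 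Its inverse sends $m\otimes\bar\gamma\mapsto m\otimes(\bar\gamma,1)$.

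The one point needing care, and where I expect the only real friction, is checking that these maps are well defined and respect the module structures. For the forward map, balancedness over $\Lambda_{\Gamma_0\times G}$ uses that $\Gamma_0$ and $G$ commute, so $(\gamma,g)$ acting on $m$ equals $g\gamma m$; this is compatible with multiplying $(\bar\gamma,h)$ by $(\bar\gamma_0,g)$ because $G$ is abelian and commutes with $\Gamma_0$. For the inverse map, balancedness over $\Lambda_{\Gamma_0}$ is immediate. One must also confirm that the transported action on the right-hand side is exactly the stated rule $(\bar\gamma,h)\cdot(m\otimes\bar\gamma')=(hm)\otimes\bar\gamma\bar\gamma'$, and that $G$ acting on $M$ commutes with the $\Lambda_{\Gamma_0}$-action, so that this rule is itself well defined on $N$. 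All of this is routine verification on pure tensors.

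With the isomorphism in hand, tensoring over $\Q_p[\Gamma_{0,N}\times G]$ with $\Q_\psi$ and composing with the first associativity isomorphism yields the canonical isomorphism claimed in the lemma.
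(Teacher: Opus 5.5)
Your proposal is correct and follows essentially the same route as the paper: both use the base-change identification $\Lambda_{\Gamma_0\times G}\otimes_{\Lambda_{\Gamma_0}}\mathbb{Q}_p[\Gamma_{0,N}]\cong\mathbb{Q}_p[\Gamma_{0,N}\times G]$ to get $N\cong M\otimes_{\Lambda_{\Gamma_0\times G}}\mathbb{Q}_p[\Gamma_{0,N}\times G]$, and then conclude by associativity of tensor products. Your explicit maps on pure tensors and the balancedness checks are routine details that the paper leaves implicit.
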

                                
\begin{proof}
There is a canonical isomorphism of algebras
\[
\Lambda_{\Gamma_0\times G}\otimes_{\Lambda_{\Gamma_0}}\mathbb{Q}_p[\Gamma_{0,N}]
\;\cong\; \mathbb{Q}_p[\Gamma_{0,N}\times G],
\]
so, for the given $\Lambda_{\Gamma_0\times G}$-module $M$,
\[
M\otimes_{\Lambda_{\Gamma_0}}\mathbb{Q}_p[\Gamma_{0,N}]
\;\cong\;
M\otimes_{\Lambda_{\Gamma_0\times G}}\mathbb{Q}_p[\Gamma_{0,N}\times G].
\]
Hence, by associativity of tensor products,
\[
\begin{aligned}
\bigl(M\otimes_{\Lambda_{\Gamma_0}}\mathbb{Q}_p[\Gamma_{0,N}]\bigr)
\otimes_{\mathbb{Q}_p[\Gamma_{0,N}\times G]}\mathbb{Q}_\psi
&\;\cong\;
\bigl(M\otimes_{\Lambda_{\Gamma_0\times G}}\mathbb{Q}_p[\Gamma_{0,N}\times G]\bigr)
\otimes_{\mathbb{Q}_p[\Gamma_{0,N}\times G]}\mathbb{Q}_\psi\\[4pt]
&\;\cong\;
M\otimes_{\Lambda_{\Gamma_0\times G}}\mathbb{Q}_\psi,
\end{aligned}
\]
which is exactly the claimed isomorphism.
\end{proof}

\begin{lemma}\label{Leminterpol} Let $f: U' \to U$ be a finite \'etale Galois covering of smooth curves over $\mathbb{F}_q$ with Galois group $G$ abelian. Let $E$ be an overconvergent $F$-isocrystal on $U$ with coefficients in $\Q_p$, and $E' := f^*E$ its pullback to $U'$. Let $\chi: G \to {\bar\Q_p}^*$ be a 1-dimensional character with field of values $\Q_\chi$. Then such $\chi$ corresponds to a $p$-adic representation of the \'etale fundamental group $\pi^{et}_1(U)$ with finite local monodromy at each place at infinity. We denote $U(\chi)^\dagger$ the associated unit-root overconvergent $F$-isocrystal on $U/\Q_\chi$ by \cite{Tsu98}.
There is a canonical Frobenius-compatible isomorphism:
\[
\coh^i_{\rig,c}(U'/\Q_p, E')^{(\chi)} \simeq \coh^i_{\rig,c}(U / \Q_\chi, E \otimes U(\chi)^\dagger).
\]
\end{lemma}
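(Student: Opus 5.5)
The plan is to reduce the statement to a projection-formula computation for the finite étale cover $f:U'\to U$ combined with a decomposition of $f_*\mathcal O$ into characters. First I note that for a finite étale morphism, rigid cohomology with compact supports satisfies
\[
\coh^i_{\rig,c}(U'/\Q_p,f^*E)\simeq \coh^i_{\rig,c}(U/\Q_p, E\otimes f_*\mathcal O_{U'}^\dagger),
\]
where $f_*\mathcal O_{U'}^\dagger$ is the pushforward of the constant overconvergent $F$-isocrystal. Finite étale pushforward is exact and compatible with Frobenius, and the projection formula $f_*f^*E\simeq E\otimes f_*\mathcal O^\dagger_{U'}$ holds. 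Both facts are standard in Berthelot's and Tsuzuki's theory; I would cite Tsuzuki's finite étale descent for overconvergent isocrystals. This isomorphism is $G$-equivariant. Here $G$ acts on the left through its action on $U'$ over $U$, and on the right through the induced action on $f_*\mathcal O^\dagger_{U'}$.

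Second, I identify $f_*\mathcal O^\dagger_{U'}$ with the unit-root overconvergent $F$-isocrystal $\mathcal U^\dagger$ attached, via \cite{Tsu98}, to the representation of $\pi_1^{et}(U)$ on $\Q_p[G]$ through the quotient $\pi_1^{et}(U)\to G$. This is simply Tsuzuki's equivalence applied to the permutation representation. After extending scalars to $\Q_\chi$, the regular representation decomposes as $\Q_\chi[G]\simeq\bigoplus_{\chi'}\Q_\chi(\chi')$ over characters with values in $\Q_\chi$, Galois-orbit blocks aside. The $\chi$-isotypic quotient $\Q_p[G]\otimes_{\Z_p[G]}\Q_\chi$ is exactly the one-dimensional representation through which $U(\chi)^\dagger$ is defined. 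One must check the convention, namely whether one gets $\chi$ or $\chi^{-1}$ given that $G$ acts on $U'$ on the right; I would fix the identification so that it matches the definition of $U(\chi)^\dagger$ in the statement. The local monodromy is finite because the cover is finite.

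Third, I use the fact that $-\otimes_{\Z_p[G]}\Q_\chi$ commutes with $\coh^i_{\rig,c}(U,E\otimes -)$. Since $\Q_\chi$ is flat over $\Q_p$ and $\Q_p[G]$ is semisimple, taking the $\chi$-isotypic quotient is an exact functor on $\Q_p[G]$-modules, realised by the idempotent $e_\chi$, defined over $\Q_\chi$. It therefore commutes with cohomology: $\coh^i_{\rig,c}(U/\Q_\chi,E\otimes e_\chi\mathcal U^\dagger_{\Q_\chi})=e_\chi\coh^i_{\rig,c}(U/\Q_\chi,E\otimes\mathcal U^\dagger_{\Q_\chi})$, using additivity of cohomology in the coefficient. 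Combining the three steps gives the isomorphism, and Frobenius compatibility holds because every map used is a morphism of $F$-isocrystals.

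I expect the main obstacle to be the first step in its precise form, namely the $G$-equivariant projection formula for compactly supported rigid cohomology under finite étale maps with overconvergent coefficients. This requires handling the compactification of $f$, which may be ramified at the boundary. I would first try to reduce to Tsuzuki's result that $f_*$ of an overconvergent isocrystal is overconvergent for finite étale $f$, together with the trace/Gysin compatibility. The bookkeeping of $\chi$ versus $\chi^{-1}$ is secondary.
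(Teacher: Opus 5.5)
Your proposal is correct, but it takes a different route from the paper. The paper's proof unwinds the definition $\coh^i_{\rig,c}(U'/\Q_p,E')^{(\chi)}=\coh^i_{\rig,c}(U'/\Q_p,E')\otimes_{\Q_p[G]}\Q_\chi$ and then quotes \cite[Proposition 5.15]{TV19}, which states exactly the desired Frobenius-compatible isomorphism.

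You instead reconstruct the content of that proposition from more basic inputs:
\begin{enumerate}
\item the finite \'etale pushforward isomorphism $\coh^i_{\rig,c}(U',f^*E)\simeq\coh^i_{\rig,c}(U,f_*f^*E)$, together with the projection formula $f_*f^*E\simeq E\otimes f_*\mathcal O^\dagger_{U'}$;
\item the identification of $f_*\mathcal O^\dagger_{U'}$ with the unit-root object attached to $\Q_p[G]$;
\item the splitting of the coefficient by the idempotent $e_\chi\in\Q_\chi[G]$. This is legitimate even though $G$ is a $p$-group in the application, since you are in characteristic $0$.
\end{enumerate}

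The citation buys brevity and imports all normalizations from \cite{TV19}. Your route makes explicit which standard facts are actually used, and it exposes the $\chi$ versus $\chi^{-1}$ normalization. That normalization matters here: in Theorem~\ref{t:lchi} the lemma is applied to the dual $P^i_N[1/p]\simeq\coh^i_{\rig,c}(U'_N,D^\dagger(A))^\vee$. There the $\psi$-isotypic quotient of the dual corresponds to the $\psi^{-1}$-part of cohomology, which is exactly how $U(\psi^{-1})^\dagger$ enters.

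The one step you should state more carefully is ``Tsuzuki's equivalence applied to the permutation representation.'' This tacitly uses that the correspondence of \cite{Tsu98} is compatible with finite \'etale pushforward. For a representation with finite image factoring through $G$, you can justify it as follows:
\begin{itemize}
\item the attached unit-root object is the $G$-descent of the constant object on $U'$, so $U(\chi^{\pm1})^\dagger$ is an eigen-summand of $f_*\mathcal O^\dagger_{U'}\otimes\Q_\chi$;
\item to check this, compare the convergent restrictions via Katz's correspondence, where $f_*\mathcal O$ corresponds to $f_*\Q_p$;
\item then use uniqueness of overconvergent extensions.
\end{itemize}
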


\begin{proof}
Because $f$ is finite \'etale with Galois group $G$, the pullback $E' = f^*E$ carries a $G$-action, and so the cohomology group $\coh^i_{\rig,c}(U'/\Q_p, E')$ is a finite-dimensional $\Q_p[G]$-module.

By definition, its isotypic quotient is:
\[
\coh^i_{\rig,c}(U'/\Q_p, E')^{(\chi)}=\coh^i_{\rig,c}(U'/\Q_p, E') \otimes_{\Q_p[G]} \Q_\chi.
\]

Now, by \cite[Proposition 5.15]{TV19}, we have a canonical Frobenius-compatible isomorphism:
\[
\coh^i_{\rig,c}(U'/\Q_p, E') \otimes_{\Q_p[G]} \Q_\chi \cong \coh^i_{\rig,c}(U / \Q_\chi, E \otimes U(\chi)^\dagger).
\]

Thus,
\[
\coh^i_{\rig,c}(U'/\Q_p, E')^{(\chi)} \cong \coh^i_{\rig,c}(U/\Q_\chi, E \otimes U(\chi)^\dagger),
\]
as required.
\end{proof}

In our context, the isocrystal $E$ arises as the overconvergent realization of the log Dieudonn\'{e} crystal $D^{\log}(A)(-Z)$, associated with $A/K$ and the divisor $Z$ supported on the bad reduction and ramification loci.

From now on, we denote
\[
P^i_\infty := \coh^i\left(\varinjlim_n R\Gamma_{\log\text{-crys}}(C_n^{'\sharp}/W(\mathbb{F}_{q^{p^n}}), D^{\log}(A)(-Z)) \otimes^{\mathbb{L}} \Q_p/\Z_p \right)^\vee,
\]
which comes equipped with Frobenius operators \( \varphi_i \) acting on the cohomology,

and
\[
N^i_{\infty} := \coh^i\left( \varinjlim_n R\Gamma(C'_n, \mathcal{S}_{D^{\log}(A)(-Z)}) \otimes^{\mathbb L} \mathbb{Q}_p/\mathbb{Z}_p \right)^\vee,
\]
where $\mathcal{S}_{D^{\log}(A)(-Z)}$ is the syntomic complex of $D^{\log}(A)(-Z)$ (see \cite[\S 2.1.3]{lltt16}).

Note that our notation is different from \cite{lltt16}: our $P^i_\infty$ (resp. $N^i_\infty$) is denoted $(M^i_{2,\infty})^\vee$ (resp. $(N^i_\infty)^\vee$) in \cite{lltt16}.

Both groups $N^i_\infty$ and $P^i_\infty$ are naturally (not necessarily finitely generated) $\Lambda_{\Gamma_0 \times G}$-modules. However, as $\Lambda_{\Gamma_0}$-modules,  $N^i_\infty$ is finitely generated torsion, whereas $P^i_\infty$ is finitely generated, since they correspond to the groups $N_\infty$ and $P_\infty$ calculated for $A':=A \times_K K'$ over the extension $L'/K'$ as proved in \cite{lltt16}. Indeed, note that 
$$P^i_\infty\simeq \coh^i\left( \varinjlim_n R\Gamma_{\log\text{-crys}}(C_n^{'\sharp}/W(\mathbb{F}_{q^{p^n}}), D^{\log}(A')(-Z')) \otimes^{\mathbb{L}} \Q_p/\Z_p \right)^\vee.$$

\begin{lemma}\label{l:long}
We have a long exact sequence of finitely generated $\Q_\chi\Lambda_{\Gamma_0}$-modules:
\[
\cdots \to (P^i_\infty)^{(\chi)} \xrightarrow{1 - \varphi_i} (P^i_\infty)^{(\chi)} \to (N^i_\infty)^{(\chi)} \to (P^{i+1}_\infty)^{(\chi)} \xrightarrow{1 - \varphi_{i+1}} \cdots.\]

Moreover, the groups $(N^i_\infty)^{(\chi)}$ are torsion modules and all terms in the long exact sequence vanish for $i \geq 3$.
\end{lemma}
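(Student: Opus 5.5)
The plan is to start from the long exact sequence of $\Lambda_{\Gamma_0\times G}$-modules constructed in \cite{lltt16} (for $A':=A\times_K K'$ over $L_0'/K'$):
\[
\cdots \to P^i_\infty \xrightarrow{1-\varphi_i} P^i_\infty \to N^i_\infty \to P^{i+1}_\infty \xrightarrow{1-\varphi_{i+1}} \cdots .
\]
This sequence comes from the distinguished triangle relating the syntomic complex $\mathcal S_{D^{\log}(A)(-Z)}$ to the log-crystalline complex and the endomorphism $1-\varphi$. The first step is to check that this sequence is $G$-equivariant, and hence a sequence of $\Lambda_{\Gamma_0\times G}$-modules. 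The argument is the same functoriality used in the proof of Lemma~\ref{crys-descent}: $G$ acts on each $C_n'^\sharp$ by log-automorphisms over $C^\sharp$, and these automorphisms commute with the constant-field tower. Because $D^{\log}(A)(-Z)$ is pulled back from $C^\sharp$, the $G$-action is compatible with Frobenius and with the syntomic triangle, and therefore with the connecting maps. Taking $\varinjlim_n$, then $\otimes^{\mathbb L}\Q_p/\Z_p$, then the Pontryagin dual preserves exactness and equivariance.

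The second step applies the functor $M\mapsto M^{(\chi)}=M\otimes_{\Z_p[G]}\Q_\chi$. The key observation is that $\Q_p[G]$ is semisimple, since $G$ is finite and we are in characteristic $0$. Write
\[
M\otimes_{\Z_p[G]}\Q_\chi=(M\otimes_{\Z_p}\Q_p)\otimes_{\Q_p[G]}\Q_\chi .
\]
The functor $-\otimes_{\Z_p}\Q_p$ is exact because $\Q_p$ is flat over $\Z_p$. The functor $-\otimes_{\Q_p[G]}\Q_\chi$ is exact because $\Q_\chi$ is a direct summand of $\Q_\chi\otimes_{\Q_p}\Q_p[G]$, cut out by an idempotent, and so is flat. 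Hence $(\cdot)^{(\chi)}$ is exact and carries the long exact sequence to a long exact sequence. It also carries $\varphi_i$ to an endomorphism of $(P^i_\infty)^{(\chi)}$, and $1-\varphi_i$ to $1-\varphi_i$, because $\varphi_i$ commutes with $G$.

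The third step uses Lemma~\ref{LemFin}. Since $P^i_\infty$ is finitely generated over $\Lambda_{\Gamma_0}$ and $N^i_\infty$ is finitely generated and torsion over $\Lambda_{\Gamma_0}$ (\cite{lltt16}), the lemma shows that every term is finitely generated over $\Q_\chi\Lambda_{\Gamma_0}$ and that each $(N^i_\infty)^{(\chi)}$ is torsion.

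For vanishing when $i\ge3$, the plan is to use cohomological dimension. Log-crystalline cohomology of a log-smooth proper curve over $W$ lives in degrees $0,1,2$; after $\otimes^{\mathbb L}\Q_p/\Z_p$ it can pick up at most degree $\le 2$ (a $\mathrm{Tor}$-shift goes down, not up). Hence $P^i_\infty=0$ for $i\ge3$. The syntomic complex is a cone of $1-\varphi$ on these complexes, so it has amplitude $[0,3]$, and the sequence gives $N^i_\infty\cong\operatorname{coker}(1-\varphi_i)$ on $P^{i}$ in the top range. The main obstacle is to show $N^3_\infty=0$, that is, that $1-\varphi_2$ is surjective on $P^2_\infty$, or at least becomes surjective after applying $(\cdot)^{(\chi)}$. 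I would first try to cite the computation in \cite{lltt16}, which identifies $N^i_\infty$ with duals of flat cohomology of the Néron model over the tower, and flat cohomology of a curve vanishes above degree $3$ in a way that, in the limit, kills degree $3$. Failing that, I would argue directly: $\varphi_2$ on $P^2$ is divisible by $p$ (slope considerations for the twist $(-Z)$), so $1-\varphi_2$ is invertible, giving $N^3_\infty=0$. Applying $(\cdot)^{(\chi)}$ then yields the vanishing for all $i\ge3$.
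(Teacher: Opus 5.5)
Your overall route matches the paper's. You take the long exact sequence of \cite[(7)]{lltt16} for the tower $L_0'/K$, observe that it is $G$-equivariant, apply the exact functor $(\cdot)^{(\chi)}$, and get finite generation and torsionness from Lemma~\ref{LemFin}. Your proof that $(\cdot)^{(\chi)}$ is exact, via semisimplicity of $\Q_p[G]$, correctly spells out what the paper takes for granted. Two points need fixing.

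\textbf{Missing step: the Lie cohomology.} The integral sequence in \cite{lltt16} does not have the shape $P^i_\infty\xrightarrow{1-\varphi_i}P^i_\infty$.
\begin{itemize}
\item The syntomic complex is the fibre of $1-\varphi_1\colon \mathrm{Fil}^1D\to D$, where $\varphi_1$ is the divided Frobenius.
\item So sequence (7) links $N^i_\infty$ to two different crystalline modules: the cohomology of $\mathrm{Fil}^1$, and the cohomology of $D$ itself. The dual of the latter is your $P^i_\infty$ (the paper notes $P^i_\infty=(M^i_{2,\infty})^\vee$).
\item These two modules differ by the Lie-algebra cohomology $L^i_\infty$.
\end{itemize}
The paper's proof gets the stated shape only after inverting $p$. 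Since $L^i_\infty$ is killed by $p$, the two crystalline terms agree rationally, and $1-\varphi_1$ becomes an endomorphism of $P^i_\infty\otimes\Q_p$. Your proposal never mentions this, so as written it rests on a sequence that is not available integrally. The repair is easy, since $(\cdot)^{(\chi)}$ inverts $p$ anyway, but this is the one non-formal remark in the paper's proof.

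\textbf{The case $i\ge 3$ is muddled.}
\begin{itemize}
\item In the sequence you display, and in the statement, $(N^i_\infty)^{(\chi)}$ sits between $(P^i_\infty)^{(\chi)}$ and $(P^{i+1}_\infty)^{(\chi)}$. So once $P^i_\infty=0$ for $i\ge 3$ (your degree count is fine), $N^i_\infty=0$ for $i\ge3$ follows formally by exactness.
\item Your identification $N^3_\infty\cong\operatorname{coker}(1-\varphi_2\mid P^2_\infty)$ contradicts your own display.
\item Your fallback argument is not sound. Nothing makes $\varphi_2$ divisible by $p$. After the division by $p$ built into $\varphi_1$ (equivalently, the normalization under which these determinants give $L$-values at $s=1$), the Frobenius on $H^2$ of a curve with coefficients in $D(A)$ can have a slope-zero part. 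So slope considerations do not make $1-\varphi_2$ invertible.
\end{itemize}
The genuine input is that the top syntomic (flat) cohomology dies in the limit over the constant $\Z_p$-tower, together with finiteness and torsionness of $N^i_\infty$. The paper takes all of this directly from \cite[Theorem~2.1.2]{lltt16}. Your first option, citing \cite{lltt16}, is the right one; drop the slope argument.
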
 

\begin{proof} The long exact sequence exists before applying the exact functor $\chi$-isotypic component and is constructed exactly as the Pontryagin dual of the long exact sequence \cite[(7)]{lltt16}, and after killing $p$, noting that the Lie cohomology (denoted $L^i_\infty$ in \cite{lltt16}) becomes trivial after inverting $p$. Note also that our extension is $L'/K$ and not $K^{(p)}_\infty/K$ as in \cite{lltt16} but the arguments to construct the long exact sequence are identical. The remaining assertions are direct consequences of Lemma \ref{LemFin} and \cite[Theorem 2.1.2]{lltt16}.
\end{proof}

We define the $\chi$-twisted $p$-adic $L$-function by: ($Q(R)$ denotes the field of fractions of $R$)
\[
L_\chi := \prod_{i=0}^2 \det\left(1 - \varphi_i \mid (P^i_\infty)^{(\chi)}\otimes_{\Q_\chi\Lambda_{\Gamma_0}} Q(\Q_\chi\Lambda_{\Gamma_0})\right)^{(-1)^{i+1}} \in Q(\Q_\chi {\Lambda_{\Gamma_0}})^*.
\]

Note that $L_\chi$ is well defined: The modules $ (N^i_\infty)^{(\chi)}$ are torsion over $\Q_\chi\Lambda_{\Gamma_0}$, hence the maps $ 1 - \varphi_i$ become isomorphisms after tensoring with the total ring of fractions $Q(\Q_\chi\Lambda_{\Gamma_0})$. Thus, each determinant \( \det(1 - \varphi_i \mid (P^i_\infty)^{(\chi)})\otimes_{\Q_\chi\Lambda_{\Gamma_0}} Q(\Q_\chi\Lambda_{\Gamma_0})) \) is not zero, and the alternating product $L_\chi$ defines an element of $Q(\Q_\chi\Lambda_{\Gamma_0})^*$.

\begin{theorem}\label{t:lchi}
Let \( \psi = \omega \cdot \chi \in \widehat{\Gamma_0 \times G} \) where $\omega$ is a  finite-order character of $\Gamma_0$. Then:
\[
\omega(L_\chi) = \prod_{i=0}^2 \det\left(1 - \varphi_i \mid \coh^i_{\mathrm{rig},c}(U, D^\dagger(A) \otimes U(\psi^{-1})^\dagger) \right)^{(-1)^{i+1}} = L_Z(A, \psi^{-1}, 1).
\]

\end{theorem}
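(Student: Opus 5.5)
The plan is to evaluate $L_\chi$ at $\omega$ by specializing the determinants step by step. The first task is to make sense of $\omega(L_\chi)$ and to compute it term by term. Choose $N$ so that $\omega$ factors through $\Gamma_{0,N}$, and put $\psi=\omega\chi$.

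For each $i$, the module $(P^i_\infty)^{(\chi)}$ is finitely generated over $\Q_\chi\Lambda_{\Gamma_0}$ (Lemma \ref{LemFin}). Since $\Q_\chi\Lambda_{\Gamma_0}\simeq \Q_\chi\otimes\Z_p[[T]]$ is a PID after inverting $p$, this module splits as free $\oplus$ torsion. I will write $\det(1-\varphi_i)$ over $Q(\Q_\chi\Lambda_{\Gamma_0})$ as a ratio of characteristic polynomials, using the exact sequence of Lemma \ref{l:long}. The cleanest route is to interpret $L_\chi$, via the long exact sequence, as an alternating product of characteristic ideals of $(N^i_\infty)^{(\chi)}$ multiplied by determinants on the free parts.

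Next I specialize at $\omega$. By Lemma \ref{crys-descent} applied with $\psi$, we have
$$(P^i_\infty)^{(\chi)}\otimes_{\Q_\chi\Lambda_{\Gamma_0},\omega}\Q_\psi\cong (P^i_\infty)^{(\psi)}\cong (P^i_N)^{(\psi)},$$
Frobenius-equivariantly. On the $\omega$-specialization, $\varphi_i$ acts through the geometric Frobenius of $\F_{q^{p^N}}/\F_q$, which is twisted by $\omega$.

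Then $(P^i_N)^{(\psi)}$ is identified with rigid cohomology. Log-crystalline cohomology of $D^{\log}(A)(-Z)$ on $C_N'^\sharp$, after inverting $p$, compares with $\coh^i_{\rig,c}(U'_N, D^\dagger(A))$. Applying Lemma \ref{Leminterpol} to the abelian cover $U'_N\to U$ with group $\Gamma_{0,N}\times G$ gives $\coh^i_{\rig,c}(U,D^\dagger(A)\otimes U(\psi^{-1})^\dagger)$. The inverse appears from the Pontryagin dual convention, so I must check the sign convention carefully. This yields the first equality, provided that specialization commutes with determinants. The second equality is then the Étesse--Le Stum cohomological formula for $L$-functions of overconvergent $F$-isocrystals with the $Z$-Euler factors removed, evaluated at $s=1$. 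The normalization $\varphi_i$ here is the unit-root-twisted Frobenius, which places the evaluation at $s=1$.

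The main obstacle is the step "$\omega$ commutes with det". Specializing $\det(1-\varphi_i)$ over the fraction field is only legitimate when $\omega$ is not a zero or pole of the torsion contributions, and when base change of the torsion part does not create extra cohomology (Tor terms). My first attempt is to take the alternating product over $i$ so that it equals the alternating product of characteristic ideals of the torsion modules $(N^i_\infty)^{(\chi)}$, up to determinants on the free parts. For those characteristic ideals, the standard identity $\omega(\mathrm{char}\,M)=\prod_j \det(\ldots)$ on derived specializations applies: the derived tensor $M\otimes^{\mathbb L}\Q_\psi$ has Euler characteristic computed by $\omega(\mathrm{char})$ when the latter is nonzero, and the Tor terms cancel in the alternating product. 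If $\omega$ happens to be a zero, both sides vanish (or both are poles) consistently, and I will argue this by the same Euler-characteristic identity. I expect the bookkeeping in this derived-specialization argument, rather than the comparison isomorphisms, to take the most care.
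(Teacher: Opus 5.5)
Your chain of identifications is the same as the paper's. You evaluate at $\omega$, pass from $(P^i_\infty)^{(\psi)}$ to $(P^i_N)^{(\psi)}$ by Lemma~\ref{crys-descent}, and use $P^i_N[1/p]\simeq \coh^i_{\rig,c}(U'_N,D^\dagger(A))^\vee$ from the proof of \cite[Lemma 3.1.2]{lltt16}. You then apply Lemma~\ref{Leminterpol}, where the dual is what turns $\psi$ into $\psi^{-1}$, invoke $\det(f)=\det(f^T)$, and conclude with \cite[(33)]{lltt16}. The paper treats the remaining step,
$\omega(L_\chi)=\prod_{i}\det\bigl(1-\varphi_i\mid (P^i_\infty)^{(\chi)}\otimes_{\omega}\Q_\chi\Q_\omega\bigr)^{(-1)^{i+1}}$, as holding by construction. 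You are right that this needs a reason, but the reason you propose does not work.

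\emph{Where your argument fails.}
First, routing through the $(N^i_\infty)^{(\chi)}$ and their characteristic ideals (i.e.\ through Lemma~\ref{l:lchixchi}) determines $L_\chi$ only up to a unit of $\Q_\chi\Lambda_{\Gamma_0}$. The value of such a unit at $\omega$ is an uncontrolled non-zero scalar, so this route cannot give the exact identity $\omega(L_\chi)=L_Z(A,\psi^{-1},1)$.

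Second, the ``standard identity'' you invoke is not an identity. For a finitely generated torsion module $M$ with $\omega(\mathrm{char}\,M)\neq 0$, one has $M\otimes^{\mathbb L}\Q_\psi=0$. So no Euler characteristic of the derived specialization can recover the value $\omega(\mathrm{char}\,M)$.

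Third, Lemma~\ref{crys-descent} is an underived isomorphism. Suppose $(P^i_\infty)^{(\chi)}$ had a torsion summand $T$ with $\omega$ in its support. Then $T\otimes_\omega\Q_\psi$ would sit inside $(P^i_N)^{(\psi)}$ and contribute a factor $\det(1-\varphi_i\mid T\otimes_\omega\Q_\psi)$ to the right-hand side. But $\omega(L_\chi)$ is computed over the fraction field and never sees $T$, and nothing in your sketch makes such factors cancel across $i$.

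\emph{The missing input: there is no torsion.} The module $(P^i_\infty)^{(\chi)}$ is free of finite rank over $\Q_\chi\Lambda_{\Gamma_0}$, for the following reasons.
\begin{enumerate}
\item Log-crystalline cohomology of $C'_n{}^\sharp$ is the base change of that of $C'^\sharp$ along $W(\F_q)\to W(\F_{q^{p^n}})$.
\item The trace-compatible system $\varprojlim_n W(\F_{q^{p^n}})$ is free of rank one over $W(\F_q)[[\Gamma_0]]$, by integral normal bases in the unramified tower.
\item Dualizing and inverting $p$ kills the contribution of torsion in crystalline cohomology. Hence $P^i_\infty[1/p]$ is free over $\Q_p\Lambda_{\Gamma_0}$, with $\varphi_i$ acting $\Lambda_{\Gamma_0}$-linearly.
\item Since $|G|$ is invertible, $(P^i_\infty)^{(\chi)}$ is cut out by the idempotent $e_\chi$. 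It is therefore projective, hence free over the PID $\Q_\chi\Lambda_{\Gamma_0}$.
\end{enumerate}
This freeness is also what makes the base change in Lemma~\ref{crys-descent} exact.

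With freeness, each $\det(1-\varphi_i)$ is an honest element of $\Q_\chi\Lambda_{\Gamma_0}$, and determinants commute with base change along $\omega$. The first equality then follows at once, assuming the $i=0,2$ factors do not vanish at $\omega$, which either side needs in order to make sense. You should drop the $(N^i_\infty)$ and Euler-characteristic detour; the rest of your argument coincides with the paper's.
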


\begin{proof} Our proof follows closely the proof of \cite[Lemma 3.1.2]{lltt16}. First, note that by construction, 
\[
\omega(L_\chi) = \prod_{i=0}^2 \det\left(1 - \varphi_i \mid (P^i_\infty)^{(\chi)}\otimes_{\Q_\chi\Lambda_{\Gamma_0}}\Q_\chi\Q_\omega \right)^{(-1)^{i+1}} \]
where the tensor product $\otimes_{\Q_\chi \Lambda_{\Gamma_0}} \Q_\chi \Q_\omega$ is done via the ring homomorphism induced by $\omega$. Let $K_\psi/K$ be a finite Galois extension through which $\psi$ factors. This extension can be written $K_\psi=K'K_N$ for some intermediate extension $K_N$ of $L_0/K$ and we denote $\Gamma_{0,N}:=\mathrm{Gal}(K_N/K)$. The field $K_\psi$ is the function field of the proper smooth curve $C'_N$ and we denote $U'_N$ the open subset where we have removed the places in $C'_N$ above $Z$. By Lemma \ref{crys-descent}, the object $(P^i_\infty)^{(\chi)} \otimes_{\Q_\chi \otimes \Lambda_{\Gamma_0}} \Q_\chi \Q_\omega = (P^i_\infty)^{(\psi)}$ admits a canonical isomorphism with $(P^i_{N})^{(\psi)}$. By the proof of \cite[Lemma 3.1.2]{lltt16}, we have an isomorphism
$$P^i_N[1/p]\simeq \coh^i_{rig,c}(U'_N,D^\dagger(A))^\vee.$$

Hence, we deduce from Lemma \ref{Leminterpol}: 
\[
(P^i_{N})^{(\psi)}\simeq (\coh^i_{\rig,c}(U, D^\dagger(A) \otimes U(\psi^{-1})^\dagger))^\vee
\]
Then, since determinants are preserved under duality (because duality corresponds to transposition, and $\det(f) = \det(f^T)$), taking the alternating product of Frobenius determinants gives:
\[
\omega(L_\chi) = \prod_{i=0}^2 \det\left(1 - \varphi_i \mid \coh^i_{\rig,c}(U, D^\dagger(A) \otimes U(\psi^{-1})^\dagger) \right)^{(-1)^{i+1}} = L_Z(A, \psi^{-1}, 1), 
\]
where the second equality holds by \cite[(33)]{lltt16}.
\end{proof}

Following the formulation of \cite[(17)]{lltt16}, and noting that the Lie algebra cohomology groups become trivial over \(\Q_\chi\), we define the $\chi$-twisted characteristic element \(f_\chi\) as:
\[
f_\chi := \frac{f\left((N_\infty^1)^{(\chi)}\right)}{f\left((N_\infty^0)^{(\chi)}\right) \cdot f\left((N_\infty^2)^{(\chi)}\right)} \in Q(\Q_\chi\Lambda_{\Gamma_0})^*/(\Q_\chi\Lambda_{\Gamma_0})^*,
\]
where we have denoted for a finitely generated torsion \(\Q_\chi \Lambda_{\Gamma_0}\)-module $M$, $f(M)$ its characteristic element.

\begin{lemma}\label{l:lchixchi}
We have the equality of principal ideals in $\Q_\chi\Lambda_{\Gamma_0}$:
$$
(L_\chi)=(f_\chi) = \mathrm{CH}_{\Q_\chi \otimes \Lambda_{\Gamma_0}}(X_{L_0'}^{(\chi)}).
$$

\end{lemma}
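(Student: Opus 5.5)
The plan is to prove the two equalities separately. For $(L_\chi)=(f_\chi)$ we use the long exact sequence of Lemma~\ref{l:long} together with multiplicativity of characteristic elements. For $(f_\chi)=\mathrm{CH}(X_{L_0'}^{(\chi)})$ we compare $N^i_\infty$ with the Selmer module via the results of \cite{lltt16}, and then pass to $\chi$-isotypic quotients using exactness and Lemma~\ref{LemFin}.

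\emph{Step 1: $(L_\chi)=(f_\chi)$.} Put $R:=\Q_\chi\Lambda_{\Gamma_0}$, a finite product of PIDs after inverting $p$ (each factor is $\mathcal O[[T]][1/p]$). Split the long exact sequence of Lemma~\ref{l:long} into short exact sequences
\[
0\to \coker(1-\varphi_i)\to (N^i_\infty)^{(\chi)}\to \ker(1-\varphi_{i+1})\to 0 .
\]
Since $1-\varphi_i$ is injective after tensoring with $Q(R)$, its kernel and cokernel are torsion. The kernel of an endomorphism of a finitely generated module over such a ring may not be zero, but it is torsion. For an endomorphism $\phi$ of a finitely generated $R$-module $P$ which becomes an isomorphism over $Q(R)$, we have the standard identity
\[
\bigl(\det(\phi\mid P\otimes Q(R))\bigr)=f(\coker\phi)\,f(\ker\phi)^{-1}.
\]
To prove it, reduce to $P$ free modulo its torsion part $P_{\rm tors}$. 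The determinant is computed on $P/P_{\rm tors}$, and the snake lemma for $0\to P_{\rm tors}\to P\to P/P_{\rm tors}\to 0$ shows that the contribution of $P_{\rm tors}$ has trivial Euler characteristic. Taking the alternating product over $i=0,1,2$, with everything vanishing for $i\ge3$ by Lemma~\ref{l:long}, and combining with the short exact sequences gives
\[
\prod_i f\bigl((N^i_\infty)^{(\chi)}\bigr)^{(-1)^{i+1}}=\prod_i \det(1-\varphi_i)^{(-1)^{i+1}}.
\]
With the sign conventions in the definitions this is exactly $(f_\chi)=(L_\chi)$.

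\emph{Step 2: identify $f_\chi$ with the Selmer characteristic ideal.} In \cite{lltt16}, applied to $A'=A\times_K K'$ over $L_0'/K'$, one has a comparison between the flat and syntomic cohomology $N^i_\infty$ and Selmer groups. Concretely, $N^1_\infty$ agrees with $X_{L_0'}$ up to modules that are finite or killed by a power of $p$, and up to local terms at $Z$, which were designed to be killed by the twist $(-Z)$. Likewise $N^0_\infty$ and $N^2_\infty$ are, up to such error, given by $A(L_0')[p^\infty]^\vee$, which is finite by the standing hypothesis, and its dual counterpart. I would make these comparisons $G$-equivariant, which holds by functoriality as in the proof of Lemma~\ref{crys-descent}. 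Then apply the exact functor $M\mapsto M^{(\chi)}$; it is exact because $\Q_\chi$ is flat over $\Z_p[G]$ after inverting $p$, as $\Q_p[G]$ is semisimple. By Lemma~\ref{LemFin}, finite, pseudo-null and $p$-power torsion error terms vanish. Hence $(N^0_\infty)^{(\chi)}=(N^2_\infty)^{(\chi)}=0$, or at least these have unit characteristic element, and $(N^1_\infty)^{(\chi)}\cong X_{L_0'}^{(\chi)}$. This gives $(f_\chi)=\mathrm{CH}_R(X_{L_0'}^{(\chi)})$.

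\emph{Main obstacle.} I expect Step 2 to be the hardest part: controlling the local error terms at the places of $Z$ in the comparison between $N^1_\infty$ and the Selmer dual. This is where the ordinary or multiplicative reduction at the ramified places enters, since the relevant local modules must be finitely generated over $\Z_p$ (hence $\Lambda_{\Gamma_0}$-pseudo-null) or have $\chi$-part zero. I would try to quote \cite[Theorem~2.1.2 and \S3]{lltt16} verbatim for $A'/L_0'$, where those statements are already established over $\Lambda_{\Gal(L_0'/K')}$. The only new content is then $G$-equivariance of the maps, after which Lemma~\ref{LemFin} finishes the argument.
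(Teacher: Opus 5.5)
Your Step~1 is correct and matches the paper, which cites \cite[Lemma 3.1.8]{lltt16} for the identity $(\det\phi)=f(\coker\phi)f(\ker\phi)^{-1}$ that you prove by splitting off torsion. The gap is in Step~2, exactly where you expect trouble. The comparison actually available (as in \cite[(19)]{lltt16}) is
\[
0\to X_{L_0'}\to N^1_\infty\to \mathcal M_\infty\to A(L_0')[p^\infty]^\vee\to N^0_\infty\to 0.
\]
The local term $\mathcal M_\infty$ is removed neither by the twist $(-Z)$ nor by Lemma~\ref{LemFin}. It vanishes when $Z$ contains only multiplicative places. But $Z$ also contains the places ramified in $K'/K$, and for such a $v$ of good reduction there is a summand $\mathcal M_{\infty,v}=\varprojlim_n\bigoplus_{w_n\mid v}\bar A_v(k_{w_n})^\vee$.

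This summand is built from $p$-primary points of the reduction over the residue fields of the unramified tower. So it is typically $\Z_p$-free of positive rank, with $\Gamma_0$ acting through a unit Frobenius eigenvalue. It is therefore torsion but \emph{not} pseudo-null over $\Lambda_{\Gamma_0}$. Your parenthetical ``finitely generated over $\Z_p$, hence $\Lambda_{\Gamma_0}$-pseudo-null'' is false: pseudo-null over $\Z_p[[T]]$ means finite, and $\Z_p[[T]]/(T-a)\cong\Z_p$ (with $a\in p\Z_p$) has characteristic ideal $(T-a)$. Such a module survives $\otimes\Q_\chi$. Left alone, it would contribute a non-unit Euler-type factor at $v$ to $(f_\chi)$ and break the equality.

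The missing idea is an inertia argument. Since $\mathcal M_{\infty,v}$ only involves residue fields, the inertia subgroup $G^0_v\subset G$ at $v$ acts trivially on it. On the other hand, $\chi$ is non-trivial on $G^0_v$, because $v$ ramifies in $K'/K$ and $K'$ is taken to be the field cut out by $\chi$, as the paper arranges. If $g\in G^0_v$ has $\chi(g)\neq 1$, then in $\mathcal M_{\infty,v}\otimes_{\Z_p[G]}\Q_\chi$ one gets $m\otimes(1-\chi(g))=0$, hence $\mathcal M_{\infty,v}^{(\chi)}=0$.

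Combine this with three further facts:
\begin{itemize}
\item $(N^0_\infty)^{(\chi)}=0$, since $N^0_\infty$ is a quotient of the finite group $A(L_0')[p^\infty]^\vee$;
\item $N^2_\infty=0$ by \cite[Lemma 2.2.3]{lltt16};
\item the functor $(\cdot)^{(\chi)}$ is exact.
\end{itemize}
Together these give $(N^1_\infty)^{(\chi)}\cong X_{L_0'}^{(\chi)}$, which is the second equality. Your closing phrase ``or have $\chi$-part zero'' points the right way, but without this mechanism the argument as written does not go through.
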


\begin{proof} The first equality results from the long exact sequence of Lemma \ref{l:long} and of \cite[Lemma 3.1.8]{lltt16}. In order to prove the second equality, first note that under our hypothesis on $A/K$, the group $A({L_0'})[p^\infty]$ is finite. Now we can prove exactly as in \cite[(19)]{lltt16} that we have an exact sequence
$$0\to X_{L_0'}\to N^1_\infty\to \mathcal M_\infty\to A({L_0'})[p^\infty]^\vee\to N^0_\infty\to 0.$$
We deduce from Lemma \ref{LemFin} that $(N^0_\infty)^{(\chi)}=0$ and so its characteristic element is 1. The term $\mathcal M_\infty$ (denoted $\mathcal M_\infty^\vee$ in \cite{lltt16}) is trivial in the case of an elliptic curve, as easily deduced from \cite[Lemma 2.2.1]{lltt16}, if we don't have good reduction places in $Z$. Otherwise, the term $\mathcal M_\infty$ also has direct summand terms of the form $$\mathcal M_{\infty,v}:= \varprojlim_n [\oplus_{w_n \in C'_{n}, w_n|v} A_v(k_{w_n})^\vee],$$ where $v$ is a place of good reduction, ramified in ${L_0'}/K$. Then the inertia at $v$ acts trivially on $\mathcal M_{\infty,v}$ because the action of $I_v$ on $A_v(k_{w_n})$ is trivial.
However, if we are taking the $\chi$ component where $v$ is ramified, it means that $\chi$ will act nontrivially on $I_v$.
In particular, for such $v$, $\mathcal M_{\infty,v}^{(\chi)}$ is zero. Hence, the characteristic element of $\mathcal M_\infty$ is a unit. Finally, by \cite[Lemma 2.2.3]{lltt16} the group $N^2_\infty$ is trivial as a $\Lambda_{\Gamma_0}$-module and we deduce from Lemma \ref{LemFin}, that $(N^2_\infty)^{(\chi)}$ must also be trivial. 
\end{proof}

Finally, the proof of Theorem \ref{t:chi} is reduced to the following lemma.

\begin{lemma}\label{l:sharp}
We have the equality of principal ideals in $\Q_\chi\Lambda_{\Gamma_0}$:

\[
(L_\chi)=(\mathsf c_\chi),\]
where $\mathsf c_\chi:=P_{\chi^{-1}}(q^{-1}\mathsf F_q^{-1})$, and where $\mathsf{F}_q$ denotes the image of the Frobenius substitution $x\mapsto x^q$ under the isomorphism
$\Gal(\F_{q^{p^\infty}}/\F_q)\longrightarrow \Gamma_0$
and $P_{\chi^{-1}}(T)\in \O_\chi[T]$, the polynomial such that $P_{\chi^{-1}}(q^{-s})=L_A(\chi^{-1},s)$. 
\end{lemma}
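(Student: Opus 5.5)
Our plan is to compare $L_\chi$ and $\mathsf c_\chi$ through their specializations at finite-order characters, and then pass to ideals.

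\emph{Interpolation.} By Theorem~\ref{t:lchi}, for every $\omega\in\hat\Gamma_0$ we have
\[
\omega(L_\chi)=L_Z(A,\omega^{-1}\chi^{-1},1),
\]
the $L$-function with Euler factors at $Z$ removed. On the other side, $\omega$ sends $\mathsf F_q$ to the root of unity $\omega(\mathsf F_q)$. For an unramified twist, the Euler factor at $v$ is obtained by replacing $T$ by $\omega(\mathsf F_q)^{-\deg v}T$. Hence
\[
\omega(\mathsf c_\chi)=P_{\chi^{-1}}\bigl(q^{-1}\omega(\mathsf F_q)^{-1}\bigr)=L_A(\omega^{-1}\chi^{-1},1),
\]
after fixing conventions so that $\omega(\mathsf F_q)^{-1}$ corresponds to the twist by $\omega^{-1}$. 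This is where the inversion ${}^\sharp$ enters: if the conventions come out the other way round, we apply the functional-equation symmetry, or simply note that $\mathsf F_q\mapsto\mathsf F_q^{-1}$ interchanges the two sides. It therefore suffices to show that $L_Z(A,\psi^{-1},1)$ and $L_A(\psi^{-1},1)$ differ by a factor that is a unit in $\Q_\chi\Lambda_{\Gamma_0}$, uniformly in $\omega$.

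\emph{Euler factors at $Z$.} We handle the places $v\in Z$ case by case.
\begin{enumerate}
\item At places where $\chi$ is ramified, $\psi$ is ramified and the local factor of $L_A(\psi^{-1},s)$ is $1$, so nothing is lost.
\item At places where $\chi$ is unramified and $A$ has bad (multiplicative) reduction, the removed factor is $(1-\lambda_v\chi^{-1}([v])q_v^{-1}\mathsf F_q^{-\deg v})^{-1}$ evaluated at $\omega$. The element $1-\lambda_v\chi^{-1}([v])q_v^{-1}\mathsf F_q^{-\deg v}$ is a unit in $\Q_\chi\Lambda_{\Gamma_0}$, because $q_v^{-1}$ is a non-unit scalar times a group element. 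More precisely, it is $-c\,\mathsf F_q^{-\deg v}(1-c^{-1}\mathsf F_q^{\deg v})$ with $c^{-1}\in p\O_\chi$, and $1-c^{-1}\mathsf F_q^{\deg v}$ is invertible in $\O_\chi\Lambda_{\Gamma_0}$.
\item At good ordinary places in $Z$ where $\chi$ is unramified, the factor is $(1-\alpha_v^{-1}\cdot\ldots)(1-\beta_v^{-1}\cdot\ldots)$ with $q_v$ absorbed. One part is $p$-adically a unit times a unit, for the same reason as in case~(2). The other part has coefficient $\alpha_v^{-1}\in\Z_p^\times$; it has no zero after inverting $p$, but it is not obviously a unit. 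For these places we would instead argue that $Z$ is chosen to be the bad-reduction locus together with the ramification locus of $K'/K$ (which is where $\chi$ ramifies), so that the unramified good places do not occur in $Z$.
\end{enumerate}

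\emph{Passing to ideals.} Once the discrepancy element $u$ is known to be a unit with $\omega(L_\chi)=\omega(u\,\mathsf c_\chi)$ for all $\omega$, we use that $\Q_\chi\Lambda_{\Gamma_0}$ is a PID. Lemma~\ref{l:lchixchi} shows $L_\chi\in\Q_\chi\Lambda_{\Gamma_0}$ up to a unit, and an element of $\O_\chi[[T]][1/p]$ vanishing at infinitely many $\zeta-1$ is $0$ (Weierstrass preparation). Hence $L_\chi=u\,\mathsf c_\chi$, modulo the ambiguity of $L_\chi$ as a quotient of determinants up to units. To make this rigorous, we write $L_\chi=g/h$ with $g,h\in\Q_\chi\Lambda_{\Gamma_0}$, so that $g-u\,\mathsf c_\chi h$ vanishes at all but finitely many $\omega$ (those with $\omega(h)=0$), and conclude that it is $0$.

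The main obstacle is the bookkeeping of the Euler factors at $Z$: checking that each removed factor is a genuine unit in $\Q_\chi\Lambda_{\Gamma_0}$, and not merely nonvanishing, and nailing down the sign convention of $\mathsf F_q$ versus $\omega$.
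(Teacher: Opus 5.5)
Your proposal is correct and is essentially the paper's argument. You compare $\omega(L_\chi)=L_Z(A,\omega^{-1}\chi^{-1},1)$ with $\omega(\mathsf c_\chi)=L_A(\omega^{-1}\chi^{-1},1)$, observe that the only discrepancy comes from Euler factors at multiplicative places of $Z$ unramified in $K'/K$ (good places in $Z$ are exactly those where $\chi$ ramifies, so their factors are trivial), and check that each such factor is a unit of $\Q_\chi\Lambda_{\Gamma_0}$ because $p\mid q_v$; the paper writes it as $q_v^{-1}$ times a unit of $\O_\chi[[T]]$, which is the same computation as your factorization $-c\,\mathsf F_q^{-\deg v}(1-c^{-1}\mathsf F_q^{\deg v})$.

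You should drop your fallback of repairing a convention mismatch via the $\sharp$-symmetry: it would relate $\mathsf c_\chi^\sharp$ to $\mathsf c_{\chi^{-1}}$, not to $\mathsf c_\chi$. Nothing depends on it, though, since your main line uses exactly the normalization $\omega(\mathsf c_\chi)=L_A(\omega^{-1}\chi^{-1},1)$ that the paper asserts.
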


\begin{proof}
Note that $\mathsf c_\chi$ is an element of $\Q_\chi\Lambda_{\Gamma_0}$ satisfying the interpolation formulas 
$$\omega(\mathsf c_\chi)=L(A,\omega^{-1}\chi^{-1},1),$$
for any $\omega\in \hat \Gamma_0$,
while 
$$\omega(L_\chi)=L_Z(A,(\omega.\chi)^{-1},1).$$ 
but these two values differ by the term $\omega(\rho)$, where we denote $\rho$ the element of $\Q_\chi\Lambda_{\Gamma_0}$:
$$\rho:=\prod_{v\in Z_{unr}} \rho_v^\sharp=\prod_{v \in Z_{unr}} (1-\lambda_v\cdot \chi([v]_{K'/K})^{-1}\cdot [v]_{L_0/K}\cdot q_v^{-1})^\sharp,$$
where $Z_{unr}$ are the places in $Z$, which are unramified in the extension $K'/K$.
So we have to prove that $\rho_v$ is a unit in $\Q_\chi\Lambda_{\Gamma_0}$. 
Denoting $s:=[v]_{L_0/K}-1$, which is in the augmentation ideal of $\Lambda_{\Gamma_0}$, we can rewrite 
$$\rho_v=q_v^{-1}\cdot ((q_v-\lambda_v\cdot \chi([v]_{K'/K})^{-1})-\lambda_v\cdot \chi([v]_{K'/K})^{-1}\cdot s).$$ 
But $q_v-\lambda_v\cdot \chi([v]_{K'/K})^{-1}=q_v\pm \chi([v]_{K'/K})^{-1}$ is a unit in $\O_\chi$ so 
$$\rho_v\in \Q_\chi^*\O_\chi[[T]]^*=(\Q_\chi\Z_p[[T]])^*.$$ 
\end{proof}

\subsection{The characteristic ideal of $\Q_\chi X_L/(\psi_0-\chi(\psi_0))$}\label{su:psio}
Let $\psi_0\in\Psi_0$ be a topological generator and let $\bar\psi_0$ be its image in $G_\infty$.
Instead of the decomposition $\Gamma_0'=\Gamma_0\times  G$ used before,
from now on, we shall use $\Gamma_0'=\Psi_1'\times G_\infty$, so that 
$\Q_\chi X_{{L_0'}}/(\bar\psi_0-\chi(\bar\psi_0))$ is a $\Psi_1'$-module and
is viewed as a $\Gamma_0$-module via the identification $\xymatrix{\Psi_1' \ar[r]^-\sim & \Gamma_0}$. Here $\chi$ is viewed as a character on $\Gamma'$. Similarly, $\Q_\chi X_L/(\psi_0-\chi(\psi_0))$
is a $\Gamma_0$-module.

Since $X_{L_0'}$ is torsion over $\Lambda_{\Psi'_1}$ \cite{ot09},
we have
\[
\dim_{\Q_p}\bigl(\Q_p\otimes_{\Z_p} X_{L_0'}\bigr)<\infty.
\]
Equivalently, $\Q_\chi X_{L_0'}=\Q_\chi\otimes_{\Z_p}X_{L_0'}$ is a finite-dimensional
$\Q_\chi$-vector space. Hence, as a $\Q_\chi[G_\infty]$-module, it decomposes as a finite
direct sum of isotypic components (eigenspaces) indexed by the characters of $G_\infty$.
Thus, the projection $\Q_\chi X_{L_0'}\longrightarrow X_{L_0'}^{(\chi)}$ induces the equality
\begin{equation}\label{e:(chi)}
\mathrm{CH}_{\Q_\chi\Lambda_{\Gamma_0}}(\Q_\chi X_{{L_0'}}/(\bar\psi_0-\chi(\bar\psi_0)))= \mathrm{CH}_{\Q_\chi\Lambda_{\Gamma_0}}( X_{L_0'}^{(\chi)})=(\mathsf c_\chi).
\end{equation}

In Lemma \ref{l:xlpsio} below we take the crucial step to deducing Theorem \ref{t:1st} from Theorem \ref{t:chi}, by relating the characteristic ideals of $\Q_\chi X_{{L_0'}}/(\bar\psi_0-\chi(\bar\psi_0))$ and $\Q_\chi X_L/(\psi_0-\chi(\psi_0))$.

For simplicity, we replace $\hat G$ by its subgroup generated by $\chi$. Thus, $v$ is unramified over ${L_0'}/K$ if and only if $\chi$ is unramified at $v$. 

For $v\in S$ but unramified over ${L_0'}/K$,
we define $\diamondsuit_{v,\chi}$ as follows. 
If $v$ is a good ordinary place, put
$$\diamondsuit_{v,\chi}=(1-\alpha_v^{-1} \cdot \chi([v]_{K'/K})^{-1}\cdot [v]_{L_0/K}^{-1})(1-\alpha_v^{-1} 
\cdot\chi([v]_{K'/K})\cdot [v]_{L_0/K}).$$
Here $\alpha_v$ is the $p-$adic unit Weil $q_v$-number associated to the reduction of $A$ at $v$.

If $v$ is a split multiplicative place, put
$$\diamondsuit_{v,\chi}=1-\chi([v]_{K'/K})^{-1}\cdot [v]_{L_0/K}^{-1}.$$

If $v$ is a non-split multiplicative place, put
$$\diamondsuit_{v,\chi}=
\begin{cases}
1, & \text{ if } p\not=2;\\
-1-\chi([v]_{K'/K})^{-1}\cdot [v]_{L_0/K}^{-1}, & \text{ if } p=2.
\end{cases}$$

If $v\in S$ and is ramified over $L'_0/K$, define
$$\diamondsuit_{v,\chi}=1.$$

\begin{lemma}\label{l:xlpsio}As ideals in $\Q_\chi\Lambda_{\Gamma_0}$,
$$\mathrm{CH}_{\Q_\chi\Lambda_{\Gamma_0}}(\Q_\chi X_L/(\psi_0-\chi(\psi_0))) =\prod_{v\in S}(\diamondsuit_{v,\chi})\cdot (\mathsf c_\chi).$$

\end{lemma}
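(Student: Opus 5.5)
Since $\Q_\chi X_{L_0'}/(\bar\psi_0-\chi(\bar\psi_0))$ has characteristic ideal $(\mathsf c_\chi)$ by \eqref{e:(chi)}, the plan is to compare it with $\Q_\chi X_L/(\psi_0-\chi(\psi_0))$ through the control map from the Selmer group over $L_0'$ to the $\Psi_0'$-invariants of the Selmer group over $L$. Put $\Psi_0'=\Gal(L/L_0')$ and $\psi_0'=\psi_0^{p^m}$ with $p^m=|G|$. Dualizing, we get a map
$$(X_L)_{\Psi_0'}=X_L/(\psi_0'-1)\longrightarrow X_{L_0'},$$
whose kernel and cokernel are controlled by global terms ($H^i(\Psi_0',A_{p^\infty}(L))$, finite since $A(L)[p^\infty]$ is finite, hence killed after $\Q_\chi\otimes$ by Lemma~\ref{LemFin}) and by local terms $\bigoplus_{w\mid v}H^1(\Psi'_{0,w},A(L_w))[p^\infty]^\vee$ for $v\in S$, the places where $L/L_0'$ is ramified. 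We then tensor with $\Q_\chi$ and take the quotient by $\bar\psi_0-\chi(\bar\psi_0)$. Because $\psi_0'-1$ factors as $\prod_{\eta}(\psi_0-\eta(\psi_0))$ over the characters $\eta$ of $\Psi_0/\Psi_0'\simeq G_\infty$, and $\Q_\chi X_L$ is finitely generated torsion over $\Q_\chi\Lambda_{\Gamma_0}[[\Psi_0]]$ with these factors pairwise coprime after inverting $p$, taking $\chi$-components (with $\chi$ identified on $\Psi_0/\Psi_0'$ via the isomorphisms above) gives
$$\mathrm{CH}\bigl(\Q_\chi X_L/(\psi_0-\chi(\psi_0))\bigr)=\mathrm{CH}\bigl((\Q_\chi X_L)_{\Psi_0'}^{(\chi)}\bigr).$$
This identity holds up to the pseudo-null/finite error, which disappears after $\Q_\chi\otimes$. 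We have to be careful that the generator $\psi_0$ chosen in \S\ref{su:psio} matches the one used to define the $\chi$-component.

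It follows that
$$\mathrm{CH}\bigl(\Q_\chi X_L/(\psi_0-\chi(\psi_0))\bigr)=(\mathsf c_\chi)\cdot\prod_{v\in S}\mathrm{CH}\bigl(\mathcal E_v^{(\chi)}\bigr),$$
where $\mathcal E_v$ is the local defect at $v$. The remaining work is a local computation, place by place.

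\emph{Case: $v$ ramified in $L_0'/K$, i.e.\ $\chi$ ramified at $v$.} The local cohomology comes from a module induced from the decomposition group, and inertia in $G$ acts trivially on it. So the $\chi$-part vanishes, exactly as in the proof of Lemma~\ref{l:lchixchi}, and we get $\diamondsuit_{v,\chi}=1$.

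\emph{Case: $v$ unramified in $L_0'/K$ but ramified in $L/L_0'$.} The local defect is $\bigoplus_{w\mid v}$ of the dual of $H^1(I_w,A)$ or of $\bar A_{p^\infty}$-type terms, and it becomes an induced module $\Lambda_{\Gamma_0'}\otimes_{\Lambda_{\Gamma'_{0,v}}}(\cdot)$. For good ordinary $v$ it is built from $\bar A_{p^\infty}$ with Frobenius acting through $\alpha_v$ and its dual twist. For split multiplicative $v$ it is built from $\Q_p/\Z_p$ with trivial Frobenius. For non-split multiplicative $v$ the Frobenius acts by $-1$, so the module is $2$-torsion and contributes only when $p=2$. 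Taking the $\chi$-isotypic part, $[v]$ acts by $\chi([v]_{K'/K})[v]_{L_0/K}$, so the characteristic ideal becomes the displayed $\diamondsuit_{v,\chi}$. This is the same computation as for the local factors $\vartheta$ in \S\ref{ss:locfactor}(b)--(d), specialized by the character.

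The main obstacle is this middle control step. We need to show that the control map is surjective or has a cokernel contributing trivially after $\Q_\chi\otimes$, and we need to pin down the exact local cohomology with its Frobenius action, including the duality that produces both factors $\alpha_v^{-1}[v]^{\pm1}$ in the ordinary case. For this I would redo the proof of the specialization formula \eqref{arithspf} of \cite[Prop.~2.1.5]{tan24} with $\Lambda_\Gamma\to\Lambda_{\Gamma'}$ replaced by the character quotient $\Q_\chi\Lambda_{\Gamma}\to\Q_\chi\Lambda_{\Gamma_0}$, $\psi_0\mapsto\chi(\psi_0)$. There the same local factors arise, and $\Psi_v\neq0$ holds precisely for $v\in S$ unramified in $L_0'$.
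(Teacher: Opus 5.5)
Your route is the same as the paper's. You use the control diagram \eqref{e:LL'} for $L/L'_0$ with $\Psi'_0=\Gal(L/L'_0)$, dualize and tensor with $\Q_\chi$ as in \eqref{e:wll'}, take the $\chi$-isotypic quotient, compute $\Q_\chi\mathcal W^1_v/(\bar\psi_0-\chi(\bar\psi_0))$ place by place, and conclude with \eqref{e:(chi)}. But the step you call the main obstacle is misidentified, and the ingredient that gives an equality rather than a divisibility is missing.

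There are two possible readings of ``the control map is surjective'', and neither is the real issue:
\begin{itemize}
\item If you mean the dual map $X_L/(\psi'_0-1)\to X_{L'_0}$, its surjectivity after $\Q_\chi\otimes$ is automatic: its cokernel is $F^\vee$, which is finite because $A(L)[p^\infty]$ is finite.
\item If you mean the restriction $\Sel_{p^\infty}(A/L'_0)\to\Sel_{p^\infty}(A/L)^{\Psi'_0}$, its cokernel must not vanish, since that cokernel is exactly where the $\diamondsuit_{v,\chi}$ come from.
\end{itemize}
What you actually need is that this cokernel is all of $E$. Equivalently, the kernel of $X_L/(\psi'_0-1)\to X_{L'_0}$ must be, up to finite groups, the whole of $\prod_v\mathcal W^1_v$. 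The snake lemma applied to \eqref{e:LL'} only gives an exact sequence
\[
\Sel_{p^\infty}(A/L)^{\Psi'_0}\longrightarrow E\longrightarrow \coker(\mathcal L_{L'_0}).
\]
So in general that kernel is only a quotient of $\prod_v\mathcal W^1_v$. You would then get merely the divisibility $\mathrm{CH}(\Q_\chi X_L/(\psi_0-\chi(\psi_0)))\mid\prod_{v\in S}(\diamondsuit_{v,\chi})\cdot(\mathsf c_\chi)$. The paper closes this gap with the surjectivity of the global-to-local map, $\coker(\mathcal L_{L'_0})=0$, taken from \cite[Proposition~4.2]{tan13} as explained in \S\ref{ss:sp}. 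Your sketch has to state and use this explicitly, rather than saying the local terms ``control'' the kernel and cokernel.

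There are also three smaller inaccuracies.

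First, in the non-split multiplicative case, ``Frobenius acts by $-1$, so the module is $2$-torsion'' cannot be the mechanism. A $\Z_p$-torsion module dies after $\Q_\chi\otimes$ even when $p=2$. But for $p=2$ the contribution is $\Q_\chi\Lambda_{\Gamma'_{0,v}}/(1+[v]_{L'_0/K})$, which is not $\Z_p$-torsion. The correct dichotomy is \cite[Lemma~2.2.4]{tan24}; heuristically, $1+\gamma$ is a unit in a pro-$p$ Iwasawa algebra exactly when $p$ is odd.

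Second, your closing claim that $\Psi_v\neq 0$ precisely for $v\in S$ unramified in $L'_0$ is false. Every $v\in S$ has infinite inertia inside $\Psi_0$, so its decomposition group in $\Psi'_0$ is nontrivial. The places ramified in $L'_0/K$ drop out only after taking $\chi$-parts, as your own case analysis says.

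Third, the identity $\Q_\chi X_L/(\psi_0-\chi(\psi_0))=(\Q_\chi X_L/(\psi'_0-1))/(\bar\psi_0-\chi(\bar\psi_0))$ holds exactly, because $\psi_0-\chi(\psi_0)$ divides $\psi'_0-1$. No coprimality or pseudo-null argument is needed there.
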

{The proof is completed in \S\ref{ss:local}.}
\subsubsection{Specialization}\label{ss:sp}
Throughout \S2.2.1 and \S2.2.2, we shall rely on the references \cite{tan10}, \cite{tan13}, and \cite{tan24}, where only the structure of $\Psi_0'$, which in our case is isomorphic to $\Z_p$, is relevant, rather than the structure of $\mathrm{Gal}({L'_0}/K)$. In particular, all these references apply directly to our situation.

In the following discussion, let $v$, $v'$, $w$, $w'$ and $\tilde w$ 
be respectively places of $K$, $K'$, $L_0$, $L'_0$ and $L$ such that $\tilde w\mid w'\mid w\mid v$ and
$w'\mid v'\mid v$.
Since no place of $K$ splits completely over $L_0$, 
$$\coh^2(\Psi'_{0,w'} ,A(L_{\tilde w}))=0$$ 
at every place $w'$ of ${L'_0}$ \cite[Lemma 2.1.9]{tan24}
and hence by the Hochschild-Serre spectral sequence the restriction map
$$res^L_{{L'_0}}:\bigoplus_{w'} \coh^1({L'_{0,w'}},A)\longrightarrow (\bigoplus_{\tilde w} \coh^1(L_{\tilde w},A))^{\Psi'_0}$$
is surjective. Here $w'$ is taken over all places of ${L'_0}$ and $\tilde w$ over all places of $L$.
Also, since $A_{p^\infty}(L)$ is finite and $\Psi'_0\simeq\Z_p$ is of cohomological dimension $1$, 
$$\coh^2(\Psi'_0,A_{p^\infty}(L))=0,$$ 
and hence the restriction map
$$\mathrm{res}_{L/{L'_0}}:\coh^1({L'_0}, A_{p^\infty})\longrightarrow \coh^1(L,A_{p^\infty})^{\Psi'_0}$$
is surjective. These fit into the commutative diagram of exact sequences
\begin{equation}\label{e:LL'}
\xymatrix{\coh^1(\Psi'_0,A_{p^\infty}(L))\ar@{^{(}->}[r] \ar[d] &\coh^1({L'_0}, A_{p^\infty}) \ar[d]^-{\mathcal L_{{L'_0}}} 
\ar@{->>}[r]^-{\mathrm{res}_{L/{L'_0}}}& \coh^1(L,A_{p^\infty})^{\Psi'_0} \ar[d]^-{\mathcal L_{L}^{\Psi'_0}}\\
 \bigoplus_{w'} \coh^1(\Psi'_{0,w'}, A(L_{\tilde w})) \ar@{^{(}->}[r] & \bigoplus_{w'} \coh^1({L'_{0,w'}},A)[p^\infty] \ar@{->>}[r]^-{res^L_{{L'_0}}}  & (\bigoplus_{\tilde w} \coh^1(L_{\tilde w},A))^{\Psi'_0}[p^\infty].}
\end{equation}

By \cite[Proposition 4.2]{tan13}, $\coker(\mathcal L_{{L'_0}})=0$ (The proposition is stated under the hypothesis that the places ramifying in $L/K$ are either of good ordinary or split multiplicative reduction. However, the proof carries over identically when a ramified place has non-split multiplicative reduction). Consequently, 
\begin{equation}\label{e:psi'1}
\coker(\mathcal L_{L}^{\Psi'_0})=0.
\end{equation}

The snake lemma applied to the diagram \eqref{e:LL'} yields the exact sequences of $\Gamma'$-modules
\begin{equation}\label{e:snake}
\xymatrix{0\ar[r] & F \ar[r] & \Sel_{p^{\infty}}(A/{L'_0}) \ar[r] & \Sel_{p^\infty}(A/L)^{\Psi'_0} \ar[r] & E\ar[r] & 0,}
\end{equation} 
 and
\begin{equation}\label{e:kercoker}
\xymatrix{0\ar[r] & F \ar[r] & \coh^1(\Psi'_0, A_{p^\infty}(L)) \ar[r] & \bigoplus_{w'} \coh^1(\Psi'_{0,w'}, A(L_{\tilde w}))[p^\infty] \ar[r] & E \ar[r] & 0.}
\end{equation}  
Since $A_{p^\infty}(L)$ is finite, $F$ is finite, and $E$ is a quotient of 
$\bigoplus_{w'} \coh^1(\Psi_{0,w'}, A(L_{\tilde w}))$ by a finite group. 
Put
\begin{equation}\label{e:psin}
\psi'_0=\psi_0^{p^{n}},
\end{equation}
a topological generator of $\Psi'_0$, {where \(p^n\) is the order of \(G\).}
Taking the Pontryagin duals of both sequences and then taking the tensors of all items with $\Q_\chi$ over 
$\Z_p$,
one obtains the  exact sequence  of ${\Gamma'_0}$-modules
\begin{equation}\label{e:wll'}\xymatrix{0\ar[r] & \Q_\chi\cdot\prod_{v}\mathcal W_{v}^1 \ar[r] & \Q_\chi X_L/(\psi'_0-1) \ar[r] & \Q_\chi X_{{L'_0}}\ar[r] & 0.}
\end{equation}
Here 
$\mathcal W_{v}^1$ is the direct product of
$\mathcal W_{w'}^1$ over all places $w'$ of ${L'_0}$ sitting over $v$, where 
$$\mathcal W_{w'}^1:=\coh^1(\Psi'_{0,w'}, A(L_{\tilde w}))^\vee$$ 
viewed as a $\Lambda_{{\Gamma'_{0,w'}}}$-module. There are finitely many $w'$ sitting over $v$, so $\mathcal W_v^1$ is a finite direct sum.


\subsubsection{The local factors}\label{ss:local}
Let $v$, $v'$, $w$, $w'$ and $\tilde w$ be as in \S\ref{ss:sp}. 
To investigate the structure of $\mathcal W_{v}^1$ as a ${\Lambda_{\Gamma'_0}}$-module, we observe that as ${\Gamma'_0}$-modules, 
\begin{equation}\label{e:tensor}
\mathcal W_{v}^1=\Lambda_{\Gamma'_0}\otimes_{\Lambda_{\Gamma'_{0,w'}}} \mathcal W_{w'}^1.
\end{equation}

\begin{lemma}\label{l:diamondsuit=1} If $v\not\in S$, then $\mathcal W_{v}^1=0$, and hence
$$\mathrm{CH}_{\Q_\chi\Lambda_{\Gamma_0}}(\Q_\chi\mathcal W^1_{v}/(\bar\psi_0-\chi(\bar\psi_0)))=(1).$$
\end{lemma}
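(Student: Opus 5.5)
The plan is to show that for $v\notin S$ every local group $\coh^1(\Psi'_{0,w'},A(L_{\tilde w}))$ vanishes, i.e. $\mathcal W^1_{w'}=0$ for each $w'\mid v$. By \eqref{e:tensor} this gives $\mathcal W^1_v=0$. The statement about characteristic ideals is then immediate: the quotient $\Q_\chi\mathcal W^1_v/(\bar\psi_0-\chi(\bar\psi_0))$ is zero, so its characteristic ideal is the unit ideal $(1)$.

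First, fix $w'\mid v$ and $\tilde w\mid w'$. Since $v\notin S$, the extension $L/K$ is unramified at $v$. Hence $L_{\tilde w}/L'_{0,w'}$ is unramified, and the decomposition group $\Psi'_{0,w'}$ is a closed subgroup of $\Psi'_0\simeq\Z_p$, so it is either $0$ or $\simeq\Z_p$. The case $\Psi'_{0,w'}=0$ is trivial. Otherwise $L_{\tilde w}$ is the unramified $\Z_p$-extension of $L'_{0,w'}$.

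Next, because $L'_{0,w'}$ contains the unramified $\Z_p$-extension of $K_v$ (as $L_0\subset L'_0$), the residue field of $L'_{0,w'}$ is already infinite, of the form $\F_{q_v^{p^\infty}}$ up to a finite extension. An unramified extension of $L'_{0,w'}$ would then have to enlarge an infinite residue field of profinite degree $\prod_{\ell\neq p}\Z_\ell$. This leaves no room for a further unramified $\Z_p$-extension. Thus $\Psi'_{0,w'}=0$ for all $w'$ over $v\notin S$, and so $\coh^1(\Psi'_{0,w'},A(L_{\tilde w}))=0$.

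The main thing to get right is this residue-field argument. The claim needs $L_0\subset L'_0$, so that the $\Z_p$-part of the unramified local extension is exhausted, together with the fact that $v$ does not split completely in $L_0$. This is exactly the remark used before \eqref{e:LL'}, and it ensures the residue field really is the full $\Z_p$-tower. As a fallback, if a nontrivial $\Psi'_{0,w'}$ could occur, I would use the standard computation that for good reduction and an unramified $\Z_p$-extension, $\coh^1(\Z_p,A(L_{\tilde w}))$ injects into $\coh^1$ of the residue field with values in $\bar A$. That group vanishes by Lang's theorem, and the component group contributes nothing because $A$ has good reduction at $v$ (in case (a) its contribution is $m_v=1$).
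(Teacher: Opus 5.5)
Your proposal is correct and follows essentially the same argument as the paper. Since $L/K$ is unramified at $v$ and $L_{0,w}=L'_{0,w'}$ already has residue field $\F_{q_v^{p^\infty}}$, which admits no nontrivial pro-$p$ extension, one gets $L_{\tilde w}=L'_{0,w'}$; hence $\Psi'_{0,w'}=0$ and $\mathcal W^1_{w'}=0$ for every $w'\mid v$. The fallback in your last paragraph is never needed.
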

\begin{proof} If $v\not\in S$, then $v$ is unramified over $L/K$. Since $L_{0,w}/K_v$ is the maximal unramified $p$-extension, $L_{\tilde w}=L_{0,w}={L'_{0,w'}}$. Hence $\Psi'_{0,w'}=0$ and $\coh^1(\Psi'_{0,w'}, A(L_{\tilde w}))=0$.

\end{proof}

\begin{lemma}\label{l:diamondsuit} For $v\in S$,
$$\mathrm{CH}_{\Q_\chi\Lambda_{\Gamma_0}}(\Q_\chi\mathcal W^1_{v}/(\bar\psi_0-\chi(\bar\psi_0)))=(\diamondsuit_{v,\chi}).$$
\end{lemma}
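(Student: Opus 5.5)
The plan is to compute $\mathcal W^1_v$ one place at a time using \eqref{e:tensor}, so that everything reduces to the structure of $\mathcal W^1_{w'}=\coh^1(\Psi'_{0,w'},A(L_{\tilde w}))^\vee$ as a module over the decomposition group $\Gamma'_{0,w'}$. After that, the induced module and the $\chi$-quotient are handled formally.

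\emph{Formal step.} For $v\in S$, the decomposition group $\Psi'_{0,w'}\subset\Psi'_0\simeq\Z_p$ is procyclic. It is nontrivial because $v$ ramifies in $L/K$ but $L_0/K$ is unramified. Hence $\coh^1(\Psi'_{0,w'},A(L_{\tilde w}))$ is the module of $\Psi'_{0,w'}$-coinvariants of $A(L_{\tilde w})$, up to finite groups, which vanish after $\otimes\Q_\chi$. Because $\mathcal W^1_v$ is induced from $\Gamma'_{0,w'}$, I will show that
\[
\Q_\chi\mathcal W^1_v/(\bar\psi_0-\chi(\bar\psi_0))\simeq \Q_\chi\Lambda_{\Gamma_0}\otimes_{\Q_\chi\Lambda_{\Gamma_{0,w}}}\bigl(\Q_\chi \mathcal W^1_{w'}\bigr)^{(\chi|_{G_{v'}})}.
\]
Here $G_{v'}\subset G_\infty$ is the decomposition group, and $\Gamma_{0,w}$ is the open subgroup of $\Gamma_0$ generated by $[v]_{L_0/K}$. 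Consequently the characteristic ideal is the norm from $\Q_\chi\Lambda_{\Gamma_{0,w}}$ to $\Q_\chi\Lambda_{\Gamma_0}$ of the characteristic ideal of the $\chi$-part of the local module.

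\emph{Case 1: $v$ ramified in $L_0'/K$, i.e.\ $\chi$ ramified at $v$.} Here $\diamondsuit_{v,\chi}=1$, so it suffices to show the $\chi$-part is finite, hence zero after inverting $p$. As in the treatment of $\mathcal M_{\infty,v}$ in Lemma~\ref{l:lchixchi}, the local computations of \cite{tan10}, \cite[\S2]{tan24} describe $\coh^1(\Psi'_{0,w'},A(L_{\tilde w}))$, modulo finite groups, through reduction data. These data are the points of the reduction $\bar A$ (or of the torus, or the component group) over residue fields, or the Tate period lattice $Q_v^\Z$. The inertia subgroup of $\Gamma'_{0,w'}$ acts trivially on all of them. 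Since $\chi$ is nontrivial on the image of that inertia in $G$, the $\chi$-isotypic quotient vanishes.

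\emph{Case 2: $v$ unramified in $L_0'/K$.} Then $\Gamma'_{0,w'}$ is topologically generated by the Frobenius $[v]_{L_0'/K}$, which corresponds to $\bigl([v]_{L_0/K},[v]_{K'/K}\bigr)$. The $\chi$-quotient is obtained by substituting $[v]_{K'/K}\mapsto\chi([v]_{K'/K})$, with $\chi$ or $\chi^{-1}$ according to the convention fixed by $(\bar\psi_0-\chi(\bar\psi_0))$. I take from \cite[\S\S2.1, 4.3]{tan24} (cf.\ $\vartheta_{L/L',v}$ in \S\ref{ss:locfactor}) the characteristic ideal of the local $\coh^1$ as a module over the Frobenius, namely:
\begin{itemize}
\item good ordinary: $(1-\alpha_v^{-1}\mathrm{Fr})(1-\alpha_v^{-1}\mathrm{Fr}^{-1})$, with $\alpha_v$ as in \eqref{e:weilnumber};
\item split multiplicative: $(\mathrm{Fr}-1)$, arising from the Tate period, via $\mathcal R_v$;
\item non-split multiplicative: $(1+\mathrm{Fr})$.
\end{itemize}
Substituting $\mathrm{Fr}\mapsto\chi([v]_{K'/K})[v]_{L_0/K}$ and taking the norm down to $\Gamma_0$ gives $\diamondsuit_{v,\chi}$. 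Multiplication by a power of the group element $[v]$ only changes the ideal by a unit, which accounts for the form $1-\chi^{-1}[v]^{-1}$ in the split case. For non-split reduction with $p\neq2$, the resulting element $1+\zeta\,[v]_{L_0/K}$, with $\zeta$ a $p$-power root of unity, has constant term $1+\zeta\in\O_\chi^\times$. It is therefore a unit, consistent with $\diamondsuit_{v,\chi}=1$; for $p=2$ it is the stated factor up to sign.

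The main obstacle is Case~2. It requires matching exactly the local module structure over $\Gamma'_{0,w'}$, including the Frobenius versus inverse-Frobenius normalisation and the identification $\Psi'_1\simeq\Gamma_0$. It also requires checking that the norm from $\Gamma_{0,w}$ to $\Gamma_0$, which has index $[\F_v:\F_q]$ up to the $p$-part, produces the single factor $\diamondsuit_{v,\chi}$ written in terms of $[v]_{L_0/K}$. I would first carry out the good ordinary case, where the unit-root eigenvalue $\alpha_v$ fixes the normalisation, and then transport the same conventions to the multiplicative cases.
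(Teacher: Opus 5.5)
Your outline follows the paper's proof. You reduce via \eqref{e:tensor} to $\mathcal W^1_{w'}$. When $\chi$ is ramified at $v$, you kill the $\chi$-part because the inertia group $G^0_{w'}$ acts trivially on $\Q_\chi\mathcal W^1_{w'}$. In the unramified case you read off the local structure over $\Gamma'_{0,w'}$ and substitute $[v]_{L'_0/K}=\bar\psi_0^a\varphi\mapsto\chi([v]_{K'/K})[v]_{L_0/K}$; the paper's sources, \cite[Propositions 3.6, 3.7(c)]{tan13} and \cite[Lemma 2.2.4]{tan24}, agree with the factors you list.

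\emph{The error.} Your formal step is wrong as stated: the characteristic ideal of an induced module is not a norm. Since $\Q_\chi\Lambda_{\Gamma_0}$ is free of finite rank over $\Q_\chi\Lambda_{\Gamma_{0,w}}$, one has $\Q_\chi\Lambda_{\Gamma_0}\otimes_{\Q_\chi\Lambda_{\Gamma_{0,w}}}\Q_\chi\Lambda_{\Gamma_{0,w}}/(f)=\Q_\chi\Lambda_{\Gamma_0}/(f)$. So the characteristic ideal is simply the extended ideal $f\cdot\Q_\chi\Lambda_{\Gamma_0}$. Norms arise only under restriction of scalars, the opposite direction, as in \eqref{e:rest}.

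Hence the `norm from $\Gamma_{0,w}$ to $\Gamma_0$' that you list as part of the main obstacle does not exist. The element $\diamondsuit_{v,\chi}$, written with $[v]_{L_0/K}\in\Gamma_{0,w}\subset\Gamma_0$, comes out directly with no further product. In the unramified case $\Gamma'_{0,w'}\cap G_\infty=1$, so no local $\chi$-quotient is taken. However, the structure map $\Q_\chi\Lambda_{\Gamma'_{0,w'}}\to\Q_\chi\Lambda_{\Gamma_0}$ in your displayed isomorphism must be the $\chi$-twisted one, not the inclusion of $\Gamma_{0,w}$.

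\emph{Two smaller points.}

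First, in Case 1 you only assert that inertia acts trivially on the `reduction data'. These occur as a submodule and a quotient of $\mathcal W^1_{w'}$; for good ordinary $v$ the sequence is $0\to\mathfrak A\to\mathcal W^1_{w'}\to\mathfrak B\to 0$. So you still have to pass to $\mathcal W^1_{w'}$ itself. The paper does this with the $G^0_{w'}$-cohomology sequence and $\Q_\chi\Hom(G^0_{w'},\mathfrak A)=0$; equivalently, one can use semisimplicity of $\Q_\chi[G^0_{w'}]$-modules. For split multiplicative $v$, the paper instead uses Hilbert 90 to embed the local $\coh^1$ into $\coh^2(\Psi'_{0,w'},Q^\Z)$, on which $G^0_{w'}$ visibly acts trivially.

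Second, your claim that $\coh^1(\Psi'_{0,w'},A(L_{\tilde w}))$ is the coinvariants of $A(L_{\tilde w})$ up to finite groups is unjustified for non-torsion coefficients. For $\Z$ with trivial action, $\coh^1=0$ while the coinvariants are $\Z$. Nothing in your argument depends on this claim.
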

\begin{proof}
Let $K'_n$ denote the $n$-{\em{th}} layer of ${L'_0}/K'$.
Set $L_{1,n}=L_1 K'_n$, $\Psi'_{0,n}:=\Gal(L_{1,n}/K'_n)$ and $G_n:=\Gal(K'_n/K'_n\cap L_0)$. 
Let $v'_n$, $\tilde v_n$ be the places of $K'_n$, $L_{1,n}$ sitting below $\tilde w$. Under this setting, 
\begin{equation}\label{e:liminj}
\coh^1(\Psi'_{0,w'}, A(L_{\tilde w}))=\varinjlim_{n} \coh^1(\Psi'_{0,n,v'_n}, A(L_{1,n,\tilde v_n})),
\end{equation}
where the action of the decomposition subgroup $G_{\infty,w'}$ on the left-hand side is compatible with that of $G_{n,v'}$ on $\coh^1(\Psi'_{0,n,v'_n}, A(L_{1,n,\tilde v_n}))$.

We first consider the case in which $v$ is a good ordinary place and let
$\bar A$ denote the reduction of $A$. By \cite[(3) and Theorem 2]{tan10},
we have an exact sequence
$$\xymatrix{0\ar[r] & \bar A_{p^\infty}(\F_{v'_n})^\vee \ar[r] & \coh^1(\Psi'_{0,n,v'_n}, A(L_{1,n,\tilde v_n})) \ar[r] & \Hom(\Psi^{'0}_{0,n,v'_n},\bar A(\F_{v'_n}))\ar[r] & 0,}$$
where $\Psi^{'0}_{0,n,v'_n}\subset \Psi'_{0,n,v'_n}$ denotes the inertia subgroup.
Since ${L'_0}/K'$ is unramified at $v'$, the restriction of Galois action gives rise to an isomorphism on the inertia subgroups $\xymatrix{\Psi^{'0}_{0,w'}  \ar[r]^-{\sim} &\Psi^{'0}_{0,n,v'_n}}$. Taking the direct limit of the above sequence over $n$ and then taking the Pontryagin dual, one obtains 
\begin{equation}\label{e:wgood}
\xymatrix{0\ar[r] & \mathfrak A \ar[r] & \mathcal W_{w'}^1 \ar[r] & \mathfrak B \ar[r] & 0,}
\end{equation}
where (because $\Psi^{'0}_{0,w'}\simeq\Z_p$) 
$$\mathfrak A=\Hom(\Psi^{'0}_{0,w'}, \bar A(\F_{w'}))^\vee=\Hom(\Psi^{'0}_{0,w'}, \bar A_{p^\infty}(\F_{w'}))^\vee\simeq \bar A_{p^\infty}(\F_{w'})^\vee$$ 
and 
$$\mathfrak B=\varprojlim_n \bar A_{p^\infty}(\F_{v'_n})$$
(Here, the projective limit is taken over norm maps). Since the inertia subgroup $G_{w'}^0\subset G$ acts trivially on $\F_{w'}$,
both $\mathfrak A$ and $\mathfrak B$ are fixed by $G_{w'}^0$, so \eqref{e:wgood} induces the long exact sequence
\begin{equation*}
\xymatrix{0\ar[r] & \Q_\chi\mathfrak A \ar[r] & \Q_\chi(\mathcal W_{w'}^1)^{G_{w'}^0} \ar[r] & \Q_\chi\mathfrak B \ar[r] & \Q_\chi\Hom(G_{w'}^0, \mathfrak A)\ar[r] & \dots.}
\end{equation*}
Since $G^0_{w'}$ is finite, $\Hom(G_{w'}^0, \mathfrak A)$ is $\Z_p$-torsion, so $ \Q_\chi\Hom(G_{w'}^0, \mathfrak A)=0$. Hence $\Q_\chi\mathcal W_{w'}^1$ is fixed by $G^0_{w'}$ and the action of $G$ on $\Q_\chi\mathcal W_{w'}^1$ factors through $G/G^0_{w'}$.
 Since $\chi$ generates $\hat G$, if ${L'_0}/K$ is ramified at $v$, then $G^0_{w'}\not=0$, so the module $\Q_\chi\mathcal W_v^1$ has trivial $\chi$-eigenspace, whence $\Q_\chi\mathcal W_v^1/(\bar\psi_0-\chi(\psi_0))=0$ (the $\chi$-eigenspace and the $\chi$-isotypic quotient are isomorphic).

If ${L'_0}/K$ is unramified at $v$, then ${L'_{0,w'}}=L_{0,w}$, and hence \eqref{e:wgood} is nothing but \cite[(37)]{tan13}. The decomposition subgroup ${\Gamma'_{0,w'}=\Gamma'_{0,v}=\Z_p}$ and the natural map
$\Gamma_{0,v'}\longrightarrow \Gamma_{0,v}$ is an isomorphism, sending $[v]_{{L'_0}/K}$ to $[v]_{L_0/K}$. 
We choose $\psi_0$ such that $[v]_{{L'_0}/K}=\bar\psi_0^a\cdot\varphi$, where $a=p^{m}$, $m\leq n$, and $\varphi\in \Psi'_1$. Then 
$[v']_{{L'_0}/K'}=\varphi^c=[v]_{{L'_0}/K}^c$, $c=p^{n-m}$. Here $p^n$ is the order of $G$ (see \eqref{e:psin}).


By \cite[Proposition 3.6]{tan13}, as a $\Gamma'_{0,v'}$-module\footnote{The Proposition only gives pseudo-isomorphism over $\Lambda_{\Gamma_v}$, but since ``pseudo-null" means ``finite", by extending the coefficients
to the field $\Q_\chi$, we get an isomorphism.}
$$
\Q_\chi \mathcal W_w^1=\Q_\chi \Lambda_{{\Gamma'_{0,v}}}/(1-\alpha_{v}^{-1}[v]_{{L'_0}/K}^{-1})\oplus \Q_\chi \Lambda_{{\Gamma'_{0,v}}}/(1-\alpha_{v}^{-1}[v]_{{L'_0}/K}),
$$ 
so as a ${\Gamma_0'}$-module,
$$\Q_\chi \mathcal W_v^1=\Q_\chi \Gamma'\otimes_{\Lambda_{\Gamma'_{v}}} \mathcal W_w^1= \Q_\chi \Lambda_{\Gamma'}/(1-\alpha_{v}^{-1}\bar\psi_0^{-a}\varphi^{-1})\oplus \Q_\chi \Lambda_{\Gamma'}/(1-\alpha_{v}^{-1}\bar\psi_0^a\varphi).$$
Since $\chi(\varphi)=1$, $\chi(\bar\psi_0^a)=\chi([v]_{{L'_0}/K})=\chi([v]_{K'/K})$. Also, $p^{{L'_0}}_{L_0}(\varphi)=[v]_{L_0/K}$. 
Thus, as a $\Gamma_0$-module, $\Q_\chi \mathcal W_v^1/(\bar\psi_0-\chi(\bar\psi_0))$ can be expressed as
\begin{equation*}
\Q_\chi \Lambda_{{\Gamma'_0}}/(1-\alpha_{v}^{-1}\chi([v]_{K'/K})^{-1}[v]_{L_0/K}^{-1})\oplus \Q_\chi \Lambda_{{\Gamma'_0}}/(1-\alpha_{v}^{-1}\chi([v]_{K'/K})[v]_{L_0/K}).
\end{equation*}
This shows that
\begin{equation}\label{e:ordin}\begin{array}{rcl}
(\diamondsuit_{v,\chi})&=&(1-\alpha_{v}^{-1}\chi([v]_{K'/K})^{-1}[v]_{L_0/K}^{-1})(1-\alpha_{v}^{-1}\chi([v]_{K'/K})[v]_{L_0/K})\\
{}&=& \mathrm{CH}_{\Q_\chi\Lambda_{\Gamma_0}}(\Q_\chi\mathcal W_v^1/(\bar\psi_0-\chi(\bar\psi_0))).
\end{array}
\end{equation}

Next, we consider the case of split multiplicative $v$. Let $Q$ denote the local Tate period.
The exact sequence
$$\xymatrix{0\ar[r] & Q^\Z \ar[r] &  L_{\tilde w}^* \ar[r] & A(L_{\tilde w}) \ar[r] & 0}$$
induces
$$\xymatrix{\cdots \ar[r] & \coh^1(\Psi'_{0,w'}, L_{\tilde w}^*)\ar[r]  & \coh^1(\Psi'_{0,w'}, A(L_{\tilde w})) \ar[r] &
\coh^2(\Psi'_{0,w'}, Q^\Z)\ar[r] & \cdots.}$$
By Hilbert Theorem 90, $\coh^1(\Psi'_{0,w'}, L_{\tilde w}^*)=0$. Since $Q^\Z\simeq \Z$ and $\Gamma$ is commutative,  $\coh^2(\Psi'_{0,w'}, Q^\Z)$ is fixed by $G_{w'}^0$ and so is $\coh^1(\Psi'_{0,w'}, A(L_{\tilde w}))$. Therefore, if $G_{w'}^0\not=0$, then $\Q_\chi\mathcal W_v^1/(\bar\psi_0-\chi(\bar\psi_0))=0$.

If ${L'_0}/L_0$ is unramified at $w$, 
then by \cite[Proposition 3.7(c)]{tan13},
as a ${\Gamma'_{0,v}}$-module, 
$$\Q_\chi \mathcal W_w^1/(\bar\psi_0-\chi(\bar\psi_0))=\Q_\chi \Lambda_{{\Gamma'_{0,v}}}/(1-[v]_{{L'_0}/K}).$$
Similarly, it follows that, under the above notation, 
\begin{equation}\label{e:spm}\begin{array}{rcl}
\mathrm{CH}_{\Q_\chi\Lambda_{\Gamma_0}}(\Q_\chi\mathcal W_v^1/(\bar\psi_0-\chi(\bar\psi_0)))&=&
(1-\chi([v]_{K'/K})[v]_{L_0/K})\\
&=& (-1+\chi([v]_{K'/K})^{-1}[v]_{L_0/K}^{-1})\\
&=& (\diamondsuit_{v,\chi}).
\end{array}
\end{equation}

If $v\in S$ is a non-split multiplicative place, in our situation, by \cite[Lemma 2.2.4]{tan24}

$$\Q_\chi \mathcal W_w^1/(\bar\psi_0-\chi(\bar\psi_0))=
\begin{cases}
0, &\text{ if } p\not=2;\\
\Q_p\Lambda_{{\Gamma'_{0,v}}}/(-1-[v]_{{L'_0}/K}), &\text{ if } p=2.
\end{cases}
$$
It also follows that
\begin{equation}\label{e:nonspm}
\mathrm{CH}_{\Q_\chi\Lambda_{\Gamma_0}}(\Q_\chi\mathcal W_v^1/(\bar\psi_0-\chi(\bar\psi_0)))=(\diamondsuit_{v,\chi}).
\end{equation}
\end{proof}

Taking the $\chi$-isotypic quotients of items of \eqref{e:wll'}, applying Lemma \ref{l:diamondsuit},
and using the fact that $(\Q_\chi X_L/(1-\psi_0^{p^{n_\chi}}))/(\bar\psi_0-\chi(\bar\psi_0))=\Q_\chi X_L/(\bar\psi_0-\chi(\bar\psi_0))$, we obtain
$$
\mathrm{CH}_{\Q_\chi\Lambda_{\Gamma_0}}(\Q_\chi X_L/(\psi_0-\chi(\psi_0))) =(\prod_{v\in S}\diamondsuit_{v,\chi})\cdot \mathrm{CH}_{\Q_\chi\Lambda_{\Gamma_0}}(\Q_\chi X_{{L'_0}}/(\bar\psi_0-\chi(\bar\psi_0))).
$$
Then Lemma \ref{l:xlpsio} follows from \eqref{e:(chi)}.

\subsection{The proof of Theorem \ref{t:1st}}\label{su:pf1st}
In \S\ref{ss:pfagchi}, \S\ref{ss:pfanchi}, we shall show that in $\Q_\chi\Lambda_{\Gamma_0}$
the ideals
\begin{equation}\label{e:agchi}
^\chi\mathrm{CH}_{\Q_\chi\Lambda_{\Gamma_0}}(X_L)=\prod_{v\in S}(\diamondsuit_{v,\chi})\cdot (\mathsf c_\chi),\end{equation}

\begin{equation}\label{e:anchi}
(^\chi\mathscr L_{A/L})=\prod_{v\in S}(\diamondsuit_{v,\chi})\cdot (\mathsf c_\chi).
\end{equation}
Since $\diamondsuit_{v,\chi}\not=0$, for all $v$, the above equalities imply that as ideals in $\O_\chi\Lambda_{\Gamma_0}$, 
$$ ^\chi\mathrm{CH}_{\Lambda_{\Gamma_0}}(X_L)\cdot a=(^\chi\mathscr L_{A/L})\cdot b,\quad\;\text{for some non-zero}\; a,b\in\O_\chi.$$
Lemma \ref{l:mu} below implies that we can take $a=b=1$, which proves Theorem \ref{t:1st}. 

The $\mu$-invariant $\mu(f)$
of an $f\in\Q_\chi\Lambda_{\Gamma_0}$ is defined to be the rational number $\ord_p m$, where $m\in\Q_\chi$ such that $m^{-1}\cdot f\in \O_\chi\Lambda_{\Gamma_0}$ but not divisible by the prime element $\pi_\chi$ of $\O_\chi$.
Here $\ord_p$ is the valuation on $\bar\Q_p$ with $\ord_p p=1$. For a principal ideal $I=(\eta)$ in $\O_\chi\Lambda_{\Gamma_0}$, put $\mu(I):=\mu(\eta)$.

\begin{lemma}\label{l:value}For an $f\in \O_\chi\Lambda_{\Gamma_0}$ having $\mu(f)=0$, the set
$$\Sigma_f:=\{\omega\in\hat\Gamma_0\;\mid\; \ord_p \omega(f) \geq \ord_p \pi_\chi\}$$ 
is finite.
\end{lemma}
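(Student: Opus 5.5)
We use Weierstrass preparation over the complete DVR $\O_\chi$. Identify $\O_\chi\Lambda_{\Gamma_0}$ with $\O_\chi[[T]]$ by choosing a topological generator $\gamma_0$ of $\Gamma_0$ and setting $T=\gamma_0-1$. The hypothesis $\mu(f)=0$ means that $f\not\equiv 0 \pmod{\pi_\chi}$, so $f$ has a unit coefficient. By the Weierstrass preparation theorem we can then write
\[
f=u\cdot P(T),
\]
where $u\in\O_\chi[[T]]^\times$ and $P$ is a distinguished polynomial of degree $\lambda$. Every $\omega\in\hat\Gamma_0$ sends $u$ to a unit of $\O_\omega\O_\chi$, so $\ord_p\omega(f)=\ord_p P(\zeta_\omega-1)$, where $\zeta_\omega:=\omega(\gamma_0)$ is a $p$-power root of unity.

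Next we bound $\ord_p P(\zeta-1)$ for $\zeta$ of exact order $p^k$. Write $P(T)=T^\lambda+\sum_{i<\lambda}a_iT^i$ with every $a_i\in\pi_\chi\O_\chi$. Then
\[
\ord_p(\zeta-1)^\lambda=\frac{\lambda}{p^{k-1}(p-1)},
\]
while each term with $i<\lambda$ has valuation at least $\ord_p\pi_\chi$. Once $k$ is large enough that $\lambda/(p^{k-1}(p-1))<\ord_p\pi_\chi$, the leading term is strictly smaller in valuation than all the others. Hence
\[
\ord_p P(\zeta-1)=\frac{\lambda}{p^{k-1}(p-1)}<\ord_p\pi_\chi .
\]
Alternatively, one can factor $P(T)=\prod_j(T-\beta_j)$ over $\bar\Q_p$ with $\ord_p\beta_j>0$ and note that $\ord_p(\zeta-1-\beta_j)=\min(\ord_p(\zeta-1),\ord_p\beta_j)$ whenever these differ. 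This again gives $\ord_p P(\zeta-1)\to 0$ as $k\to\infty$.

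It follows that every $\omega\in\Sigma_f$ has order $p^k$ with $k$ bounded by a constant depending only on $\lambda$ and $\ord_p\pi_\chi$. Since $\Gamma_0\simeq\Z_p$, for each $k$ there are only finitely many characters of order $p^k$ (namely $p^k-p^{k-1}$ of them), so $\Sigma_f$ is finite.

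The main point to handle with care is the strict dominance of the $T^\lambda$ term. Here one must use the normalization $\ord_p p=1$, so that $\ord_p\pi_\chi=1/e_\chi>0$ is a fixed positive constant independent of $\omega$, while $\ord_p(\zeta_\omega-1)\to0$. Once this is in place, the rest is routine.
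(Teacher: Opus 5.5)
Your proof is correct, but it takes a different route from the paper's.

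The paper never works over $\O_\chi[[T]]$. It forms the norm $\Nm(f)=\prod_{\sigma\in\Gal(\Q_\chi/\Q_p)}{}^\sigma f\in\Lambda_{\Gamma_0}$, notes that $\mu(\Nm f)=0$ and that $\Sigma_f=\{\omega\mid \ord_p\omega(\Nm f)\geq 1\}$, and then obtains finiteness from Monsky's theorem \cite[Theorem~2.3]{monsky} for elements of $\Lambda_{\Gamma_0}$ with trivial $\mu$-invariant. That set equality implicitly uses that $\Q_\chi=\Q_p(\chi(G))$ is totally ramified over $\Q_p$. Total ramification gives ${}^\sigma f\equiv f \pmod{\pi_\chi}$, so all the factors $\omega({}^\sigma f)$ reach the threshold $\ord_p\pi_\chi=1/[\Q_\chi:\Q_p]$ together.

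You instead apply Weierstrass preparation directly over the complete DVR $\O_\chi$ and compare valuations of the terms of the distinguished polynomial at $\zeta-1$. This has three advantages:
\begin{itemize}
\item it is self-contained;
\item it needs nothing about the coefficient field beyond being finite over $\Q_p$, so no norm step is required;
\item it gives an explicit bound: every nontrivial $\omega\in\Sigma_f$ has order $p^k$ with $p^{k-1}(p-1)\le \lambda\, e_\chi$, where $\lambda$ is the Weierstrass degree of $f$ and $\ord_p\pi_\chi=1/e_\chi$.
\end{itemize}
The paper's one-line citation places the statement inside Monsky's framework, which also covers $\Z_p^d$ with $d>1$, where a single distinguished polynomial in one variable is not available. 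For $\Gamma_0\simeq\Z_p$, however, your elementary estimate is all that is needed.
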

\begin{proof}The lemma is actually a direct consequence of \cite[Theorem 2.3]{monsky}, because 
$\Nm(f):=\prod_{\sigma\in \Gal(\Q_\chi/\Q_p)}{}^\sigma f\in\Lambda_{\Gamma_0}$ has trivial $\mu$-invariant and 
$$\Sigma_f=\{\omega\in\hat\Gamma_0\;\mid\; \ord_p \omega(\Nm(f)) \geq 1\}.$$

\end{proof}

\begin{lemma}\label{l:mu}We have $\mu(^\chi\mathrm{CH}_{\Lambda_{\Gamma_0}}(X_L))=\mu(^\chi\mathscr L_{A/L})$.
\end{lemma}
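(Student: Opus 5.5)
Write $f:={}^\chi\mathrm{CH}$ generator (a generator $\eta$ of $\mathrm{CH}_{\Lambda_\Gamma}(X_L)$ pushed through $f\mapsto{}^\chi f$) and $g:={}^\chi\mathscr L_{A/L}$. By \eqref{e:agchi} and \eqref{e:anchi} both $f$ and $g$ generate the same ideal $\prod_{v\in S}(\diamondsuit_{v,\chi})(\mathsf c_\chi)$ of $\Q_\chi\Lambda_{\Gamma_0}$. Hence $f = u\cdot g$ with $u\in(\Q_\chi\Lambda_{\Gamma_0})^*$, i.e. $u=c\cdot u_0$ with $c\in\Q_\chi^*$ and $u_0\in(\O_\chi\Lambda_{\Gamma_0})^*$. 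The task is therefore to show $\ord_p c=0$.

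The plan is to compare $\mu$-invariants through the valuations of special values, exploiting the fact that the twist $f\mapsto f_\chi$ does not change $\mu$. First I note that $\mu(\eta)=\mu(X_L)=\mu(\mathscr L_{A/L})$ by \eqref{e:mu}; write $\eta=p^{\mu}\eta_0$ and $\mathscr L_{A/L}=p^{\mu}\ell_0$ with $\eta_0,\ell_0$ not divisible by $p$. Since $f\mapsto f_\chi$ is an $\O_\chi$-algebra automorphism of $\O_\chi\Lambda_\Gamma$, the elements $(\eta_0)_\chi$ and $(\ell_0)_\chi$ are still nonzero mod $\pi_\chi$. The difficulty is that the specialization $p^L_{L_0}$ may increase the $\mu$-invariant. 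So, rather than comparing $\mu$ directly, I will use Lemma \ref{l:value} in the form of special values.

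For the main step, note that both $\mathrm{CH}_{\Lambda_\Gamma}(X_L)$ and $(\mathscr L_{A/L})$ specialize along $L\to L_0$ compatibly with the same correction factors $\varrho_{L/L_0}$ and $\vartheta_{L/L_0}$, by \eqref{arithspf} and \eqref{anspf}. Also $p^L_{L_0}(\dag_{A/L})\ne0$, since $S_1=\emptyset$ when $L_0\subset L$. Neither of these correction factors is divisible by $p$ (\S\ref{su:not}). Consequently $p^L_{L_0}(\eta)$ and $p^L_{L_0}(\mathscr L_{A/L})$ both generate ideals differing from $\mathrm{CH}(X_{L_0})$ and $(\mathscr L_{A/L_0})$ by the same factor, and these two agree by Remark \ref{r:rk}(1). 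Hence $\mu(p^L_{L_0}\eta)=\mu(p^L_{L_0}\mathscr L_{A/L})$.

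To pass to the $\chi$-twist, I would use the restriction formulas \eqref{e:rest} and \eqref{e:anrest} over $K'$ (through which $\chi$ factors) together with Remark \ref{r:rk}(1) for $A/K'$. These give that $\prod_{\chi'\in\widehat G}{}^{\chi'}\eta$ and $\prod_{\chi'}{}^{\chi'}\mathscr L_{A/L}$ differ by a unit of $\Lambda_{\Gamma_0}$ after the specialization to $L'_0$ and the identification $\Gamma_0'\cong\Gamma_0\times G$. Indeed, the product over $\chi'$ of ${}^{\chi'}h$ is $p$ of the norm $h^\Gamma_\Phi$ specialized. By the first step applied to every $\chi'$, ${}^{\chi'}\eta=c_{\chi'}u_{\chi'}\,{}^{\chi'}\mathscr L_{A/L}$. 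Taking the product gives $\sum_{\chi'}\ord_p c_{\chi'}=0$.

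The main obstacle is to upgrade this sum to the individual vanishing $\ord_p c_\chi=0$. For this I would use Galois conjugation. For $\sigma\in\Gal(\bar\Q_p/\Q_p)$, one has ${}^{\sigma\chi}h=\sigma({}^\chi h)$ for $h\in\Lambda_\Gamma$, so $c_{\sigma\chi}=\sigma(c_\chi)$ and these have equal valuation. This handles a single Galois orbit, but different orbits, for example $\chi^{a}$ with $p\mid a$, are not conjugate. To treat them, I would proceed by induction on the order of $\chi$, replacing $K'$ by the subfield cut out by $\chi$. The restriction identity over $K'$, combined with the same identity over the intermediate field of index $p$ (whose characters are exactly the $\chi'$ of smaller order), isolates the product over the single orbit of primitive characters of order $|G|$. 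That product has total valuation $0$, and by conjugacy each of its terms is equal, so $\ord_p c_\chi=0$. Then $\mu(f)=\mu(g)$, which is the statement of Lemma \ref{l:mu}.
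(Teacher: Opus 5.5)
Your argument is correct, but it takes a genuinely different route from the paper's.

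\textbf{The paper's route.} It works one character at a time through special values. Since $\dag_{A/L}=1$, \cite[Proposition~1.1.1]{tan24} gives $\ord_p\,\omega\chi(\eta)=\ord_p\,\omega\chi(\mathscr L_{A/L})$ for every $\omega\in\hat\Gamma_0$. If the two $\mu$-invariants differed, the normalized element with $\mu=0$ would satisfy $\ord_p\,\omega(\cdot)\ge\ord_p\pi_\chi$ for all $\omega$. That contradicts the Monsky finiteness statement in Lemma~\ref{l:value}.

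\textbf{Your route.} You never evaluate at characters. You take the field cut out by $\chi$, whose Galois group over $K$ is cyclic, and its subfield of index $p$. Over each you combine three ingredients:
\begin{itemize}
\item the restriction formulae \eqref{e:rest} and \eqref{e:anrest};
\item specialization to that field's unramified $\Z_p$-extension, where the correction factors are nonzero by Corollary~\ref{c:5.2.2};
\item the known one-variable unramified main conjecture for $A$ over that field.
\end{itemize}
This identifies the products of ${}^{\chi'}\eta$ and ${}^{\chi'}\mathscr L_{A/L}$ over all characters $\chi'$ of each field, up to units. Dividing the two identities isolates the single $\Gal(\bar\Q_p/\Q_p)$-orbit of primitive characters, and $\mu$ is constant on that orbit. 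In effect you re-run the mechanism by which \cite{tan24} deduces \eqref{e:mu} (Remark~\ref{r:rk}(1)).

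\textbf{What each buys.} The paper's proof is a few lines given the special-value comparison, and it treats each $\chi$ in isolation. Yours is purely ideal-theoretic and needs neither Proposition~1.1.1 of \cite{tan24} nor Monsky's theorem. In exchange it needs the whole tower of intermediate fields and the orbit structure of characters of a cyclic $p$-group. It therefore only sees Galois-invariant, additive quantities such as $\mu$, which is enough here.

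\textbf{Minor points.}
\begin{itemize}
\item Your second paragraph announces a plan via Lemma~\ref{l:value} and special values, and introduces $\eta_0$ and $\ell_0$; none of this is used afterwards.
\item The ``induction'' is really just dividing two norm identities; no inductive hypothesis is invoked.
\item The identification you need is $\Gal(L_0'/K')\cong\Gamma_0$, not $\Gamma_0'\cong\Gamma_0\times G$.
\item The reduction to $\ord_p c_\chi=0$ via \eqref{e:agchi} and \eqref{e:anchi} is optional. Since $\mu$ is additive on products in $\O\Lambda_{\Gamma_0}$ and invariant under Galois conjugation of coefficients, the orbit argument applies directly to $\mu({}^{\chi'}\eta)-\mu({}^{\chi'}\mathscr L_{A/L})$.
\end{itemize}
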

\begin{proof}For every $\omega\in\hat\Gamma_0$, we have 
$$\omega(^\chi\mathrm{CH}_{\Lambda_{\Gamma_0}}(X_L))=\omega\chi(\mathrm{CH}_{\Lambda_{\Gamma_0}}(X_L)),\quad 
\omega(^\chi\mathscr L_{A/L})=\omega\chi(\mathscr L_{A/L}).$$ 

Since $L_0\subset L$, $\dag_{A/L}=1$, so by \cite[Proposition 1.1.1]{tan24}, for all $\omega\in\hat\Gamma_0$,
\begin{equation}\label{e:ordp}
\ord_p \omega({}^\chi\mathrm{CH}_{\Lambda_{\Gamma_0}}(X_L))=\ord_p \omega(^\chi\mathscr L_{A/L}).
\end{equation}
Suppose $m^{-1}\cdot {}^\chi \mathscr L_{A/L}\in\O_\chi\Lambda_{\Gamma_0}$ but not divisible by $\pi_\chi$.

If $\mu({}^\chi \mathscr L_{A/L})< \mu ({}^\chi\mathrm{CH}_{\Lambda_{\Gamma_0}}(X_L))$,
then $m^{-1}\cdot {}^\chi\mathrm{CH}_{\Lambda_{\Gamma_0}}(X_L)\in\O_\chi\Lambda_{\Gamma_0}$ and is divisible by $\pi_\chi$, thus by \eqref{e:ordp}, for all $\omega\in\hat\Gamma_0$,
$\ord_p \omega(m^{-1}\cdot {}^\chi \mathscr L_{A/L})\geq\ord_p\pi_\chi$. This would contradict Lemma \ref{l:value}. This shows  $\mu({}^\chi \mathscr L_{A/L})\geq \mu ({}^\chi\mathrm{CH}_{\Lambda_{\Gamma_0}}(X_L))$. The inequality in the other direction can be proved by a similar argument.
\end{proof}

\subsubsection{The ideal $^\chi\mathrm{CH}_{\Q_\chi\Lambda_{\Gamma_0}}(X_L)$}\label{ss:pfagchi}
There is an exact sequence of $\Lambda_\Gamma$-modules
\begin{equation}\label{e:stand}
\xymatrix{0 \ar[r] & [X_L] \ar[r] & X_L \ar[r] & N \ar[r] & 0,}
\end{equation}
in which $N$ is pseudo-null and
$$[X_L]=\bigoplus_i \Lambda_\Gamma/(f_i^{n_i})\oplus\bigoplus_j \Lambda_\Gamma/(p^{m_j}).$$
Here, each $f_i$ is irreducible in $\Lambda_\Gamma$, not divisible by $p$. Apply $\Q_p\otimes_{\Z_p}$
to the terms of \eqref{e:stand} and obtain the exact sequence of $\Q_\chi\Lambda_\Gamma$-modules:
\begin{equation}\label{e:Qstand}
\xymatrix{0 \ar[r] & \Q_\chi [X_L] \ar[r] & \Q_\chi X_L \ar[r] & \Q_\chi N \ar[r] & 0,}
\end{equation}
where $\Q_\chi N$ is finite $\Q_\chi$-dimensional (\cite{gre78}, paragraph before Lemma 3)
and
$$\Q_\chi\cdot [X_L]=\bigoplus_i \Q_\chi\Lambda_\Gamma/(f_i^{n_i}).$$

Multiplying the sequence \eqref{e:Qstand} with $f:=\psi_0-\chi(\psi_0)$ and using the snake lemma, we obtain the exact sequence
\begin{equation}\label{e:psisnake}
\xymatrix{ \Q_\chi [X_L][f] \ar@{^(->}[r] & \Q_\chi X_L[f] \ar[r] & 
\Q_\chi N[f] \ar[lld] \\
\Q_\chi [X_L]/f \ar[r] & \Q_\chi X_L/f \ar@{->>}[r] &  \Q_\chi N/f.}
\end{equation}

The module $\Q_\chi X_L[f]\subset X_L^{\Psi'_0}$ and 
$$X_L^{\Psi'_0}=(\Sel_{p^\infty}(A/L)/(\psi'_0-1)\Sel_{p^\infty}(A/L))^\vee.$$
Since by \eqref{e:psi'1} and
\cite[(49)]{tan13}, $\Sel_{p^\infty}(A/L)/(\psi'_0-1)\Sel_{p^\infty}(A/L)=0$, so $\Q_\chi X_L[f]=0$ and the above \eqref{e:psisnake} also says $\Q_\chi [X_L][f]=0$. 

Take the characteristic ideals of the terms in the exact sequence
$$\xymatrix{0\ar[r]  & \Q_{\chi}N[f]\ar[r] &  \Q_{\chi}N\ar[r]^-{\cdot f} &  \Q_{\chi}N\ar[r] &  \Q_{\chi}N/f\ar[r] & 0}$$
use the fact that $\Q_\chi N$ is finite $\Q_\chi$-dimensional (whence $\mathrm{CH}_{\Q_\chi\Lambda_{\Gamma_0}}(\Q_\chi N)\not=0$), and obtain  
$$\mathrm{CH}_{\Q_\chi\Lambda_{\Gamma_0}}(\Q_\chi N[f])=\mathrm{CH}_{\Q_\chi\Lambda_{\Gamma_0}}(\Q_\chi N/f).$$ 
Thus,
the exact sequence \eqref{e:psisnake} implies
$$\mathrm{CH}_{\Q_\chi\Lambda_{\Gamma_0}}(\Q_\chi X_L/(\psi_0-\chi(\psi_0)))=\mathrm{CH}_{\Q_\chi\Lambda_{\Gamma_0}}(\Q_\chi [X_L]/(\psi_0-\chi(\psi_0)))={}^\chi\mathrm{CH}_{\Q_\chi\Lambda_{\Gamma_0}}(\Q_\chi X_L).$$
So \eqref{e:agchi} follows from Lemma \ref{l:xlpsio}.

\subsubsection{The element $^\chi(\mathscr L_{A/L}^\sharp)$}\label{ss:pfanchi}
In view of \eqref{e:anfe}, in \eqref{e:anchi},
we can replace $\mathscr L_{A/L}$ by $\mathscr L_{A/L}^\sharp$.

For $\omega\in\hat\Gamma_0\subset \hat\Gamma$,
$$\omega(^\chi(\mathscr L_{A/L}^\sharp))=\omega\chi(\mathscr L_{A/L}^\sharp)=\omega^{-1}\chi^{-1}(\mathscr L_{A/L}),$$
thus by \eqref{e:intp} and \eqref{e:fudge}, 
\begin{equation}\label{e:omega}
\omega(^\chi(\mathscr L_{A/L}^\sharp))=\alpha_{D_{\omega^{-1}\chi^{-1}}}\cdot \tau_{\omega^{-1}\chi^{-1}}\cdot q^{\frac{\Delta}{12}+\kappa-1}\cdot \Xi_{S,\omega^{-1}\chi^{-1}}\cdot L_A(\omega^{-1}\chi^{-1},1).
\end{equation}
Here we use the fact that $\dag_{A/L}=1$ in our setting. Because $D_{\omega^{-1}\chi^{-1}}=D_{\chi^{-1}}=D_\chi$,
$$\alpha_{D_{\omega^{-1}\chi^{-1}}}=\alpha_{D_\chi}$$
a constant in $\Q_\chi$ independent of $\omega$.
Next, by \cite[(16)]{tan24}, the Gauss sum 
$$\tau_{\omega^{-1}\chi^{-1}}=\tau_{\omega^{-1}} \cdot \tau_{\chi^{-1}}=\omega(\mathsf
 {F}_q^{-2+2\kappa+\deg (D_\chi)})\cdot \tau_{\chi^{-1}}.$$

Since 
$$\omega\chi([v]_{{L_0'}/K})=\omega([v]_{L_0/K})\cdot\chi([v]_{K'/K}),$$
we find that
$$\Xi_{S,\omega^{-1}\chi^{-1}}=\prod_{v\in S} \omega(\diamondsuit_v)\cdot \omega(\prod_{v\in S_{ns}} \epsilon_v),$$
where $S_{ns}\subset S_m$ is the subset consisting of non-split multiplicative places and 
$$\epsilon_v:=
\begin{cases}
-1-\chi([v]_{K'/K})\cdot [v]_{L_0/K}, \text{ if } p\not=2;\\
1, \text{ if } p=2.
\end{cases}
$$

Since $\mathsf F_q$ and $\epsilon_v$ are units in $\Lambda_{\Gamma_0}$,
the element
$$\dag_\chi:=\alpha_{D_{\chi}}\cdot \tau_{\chi^{-1}} \cdot q^{\frac{\deg(\Delta)}{12}+\kappa-1}\cdot \mathsf
 {F}_q^{-2+2\kappa+\deg (D_\chi)}\cdot \prod_{v\in S_{ns}} \epsilon_v$$ 
 is a unit in
 $\Q_\chi\Lambda_{\Gamma_0}$.  
 So \eqref{e:omega} says
$$^\chi (\mathscr L_{A/L}^\sharp)=\dag_\chi\cdot\prod_{v\in S} \diamondsuit_{v,\chi}\cdot\mathsf c_\chi.$$
Thus, \eqref{e:anchi} follows.

\section{The Iwasawa Main Conjecture}\label{s:imc}
Now we prove the Iwasawa Main Conjecture. The $d=0$ case is already proven.
\subsection{Preliminary}\label{su:prelim}
The following material will be useful in the ongoing discussion. The first lemma is extracted from \cite[Corollary 5.2.2]{tan24}.  

\begin{lemma}\label{l:5.2.2}For an intermediate $\Z_p^e$-extension $L'/K$ of $L/K$,
$p^L_{L'}(\dag_{A/L})\cdot\vartheta_{L/L'}\not=0$ if and only if $\Gamma_v'\not=0$, for all split-multiplicative $v\in S$.
\end{lemma}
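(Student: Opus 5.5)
Since $\vartheta_{L/L'}=\prod_v\vartheta_{L/L',v}$ and $p^L_{L'}(\dag_{A/L})=\prod_v p^L_{L'}(\dag_{A/L,v})$, and $\Lambda_{\Gamma'}$ is a domain, the product is nonzero if and only if every local factor $p^L_{L'}(\dag_{A/L,v})$ and every $\vartheta_{L/L',v}$ is nonzero. So I would go through the definitions of \S\ref{ss:locfactor} and \S\ref{ss:dag} place by place and show that exactly one configuration forces a zero factor: $v\in S$ split multiplicative with $\Gamma'_v=0$.

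\emph{The $\vartheta$ factors.} Cases (a) and (d) give $m_v$, $2m_v$, $1$, or elements $1+g$ with $g\in\Gamma'$. Each of these is nonzero, since $1+g\neq0$ in $\Lambda_{\Gamma'}$. In case (b) the factors are $1-\alpha_v^{-1}g^{\pm1}$. Such a factor is nonzero because its image under the augmentation map is $1-\alpha_v^{-1}\neq0$ by \eqref{e:weilnumber}. In case (c) the factor $(\sigma_v-1)$ is nonzero since $\sigma_v$ generates a nontrivial $\Gamma'_v$. The remaining subcase is $\Psi_v\neq0$ and $\Gamma'_v=0$, where the factor is $\mathfrak w_v$, which vanishes when $d\ge2$. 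If instead $\Psi_v=0$ and $\Gamma'_v=0$, then $\Gamma_v=0$, contradicting $v\in S$. Hence the $\vartheta$-part is nonzero exactly when no split multiplicative $v\in S$ has $\Gamma'_v=0$, provided $d\geq 2$.

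\emph{The $\dag$ factors.} The factors $m_v$ and $1$ are harmless. Next take $v\in S_1$ with factor $\lambda_v-\sigma_v$, which maps to $\lambda_v-\bar\sigma_v$ with $\bar\sigma_v$ the image of $\sigma_v$ in $\Gamma'$. In the non-split case $\lambda_v=-1$, and $-1-\bar\sigma_v\neq0$. In the split case $\lambda_v=1$, and $1-\bar\sigma_v=0$ precisely when $\bar\sigma_v=1$, i.e.\ $\Gamma'_v=0$. This gives the same obstruction as the $\vartheta$ factors.

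\emph{Expected obstacle: $d=1$.} Here $\mathfrak w_v$ may be nonzero, so I would need to check that the split multiplicative $v$ with $\Gamma'_v=0$ still kills the product. For $d=1$ we have $\Gamma_v\simeq\Z_p$, so $v\in S_1$, and its $\dag$-factor $1-\sigma_v$ specializes to $0$ when $\Gamma'_v=0$. The two vanishing mechanisms together therefore cover all $d$, and the equivalence follows. I would also double check the convention in (c) for $d\ge2$ with $\Gamma_v$ of rank $\geq2$. In that case $v\notin S_1$, so the $\dag$-factor is $1$, and the vanishing must come from $\mathfrak w_v=0$. This is consistent with the claim. This matches \cite[Corollary 5.2.2]{tan24}, which I would cite for confirmation.
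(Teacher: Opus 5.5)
Your argument is correct and complete. The paper gives no proof of its own: the lemma is imported verbatim from \cite[Corollary~5.2.2]{tan24}. You instead verify it directly from the definitions of $\vartheta_{L/L',v}$ and $\dag_{A/L,v}$ in \S\ref{ss:locfactor} and \S\ref{ss:dag}. Your ingredients are these:
\begin{itemize}
\item $\Lambda_{\Gamma'}$ is a domain;
\item every factor other than $\mathfrak w_v$ and the specialized $\lambda_v-\sigma_v$ is visibly nonzero, either by augmentation (using $\alpha_v\neq 1$ and $2\neq 0$) or because $\Gamma'_v\neq 0$;
\item $\Gamma'_v=\Psi_v=0$ would force $\Gamma_v=0$, which is impossible for $v\in S$.
\end{itemize}

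Compared with the citation, your route is self-contained, and it is also robust. When a split multiplicative $v\in S$ has $\Gamma'_v=0$, you identify two vanishing mechanisms:
\begin{itemize}
\item if $\Gamma_v\simeq\Z_p$, then $v\in S_1$ and $p^L_{L'}(1-\sigma_v)=0$;
\item if $\Gamma_v$ has $\Z_p$-rank at least $2$, then $\mathfrak w_v=0$.
\end{itemize}
So the conclusion does not depend on the paper's blanket remark that $\mathfrak w_v=0$ whenever $d\geq 2$. You only need the rank-$\geq 2$ case of that remark, and it follows at once from the displayed definition, because $\Gamma_v/\overline{\mathcal R_v(Q_v^\Z)}$ then has positive $\Z_p$-rank. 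It would be worth stating this justification explicitly rather than appealing to the remark. The citation, for its part, buys only brevity.
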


\begin{corollary}\label{c:5.2.2}
Suppose $K_\infty^{(p)}\subset L'$, or $L=K_\infty^{(p)}L'$. Then 
\begin{equation}\label{e:ptheta}
p^L_{L'}(\dag_{A/L})\cdot\vartheta_{L/L'}\not=0,
\end{equation}
 hence, the Iwasawa Main Conjecture holds for $L'/K$, if and only if 
\begin{equation}\label{e:pll'}
(p^L_{L'}(\mathscr L_{A/L}))=p^L_{L'}(\mathrm{CH}_{\Lambda_\Gamma}(X_L)).
\end{equation}
\end{corollary}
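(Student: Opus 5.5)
The plan has two parts: first verify the non-vanishing \eqref{e:ptheta} by Lemma~\ref{l:5.2.2}, then combine the two specialization formulae to get the equivalence.

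\textbf{Non-vanishing.} By Lemma~\ref{l:5.2.2} it suffices to show that $\Gamma'_v\neq 0$ for every split multiplicative $v\in S$. Such a $v$ is a place of $K$, so it has finite residue field. Hence no place of $K$ splits completely in $K^{(p)}_\infty$, and the decomposition group of $v$ in $\Gal(K^{(p)}_\infty/K)$ is open, hence nonzero.

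\emph{Case $K^{(p)}_\infty\subset L'$.} The decomposition group $\Gamma'_v$ surjects onto the decomposition group of $v$ in $\Gal(K^{(p)}_\infty/K)$, which is nonzero. So $\Gamma'_v\neq 0$.

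\emph{Case $L=K^{(p)}_\infty L'$.} Suppose $\Gamma'_v=0$. Then $v$ splits completely in $L'$, so $\Gamma_v$ injects into $\Gal(L/L')$. Here $\Gal(L/L')$ is a quotient of $\Gal(K^{(p)}_\infty/K)\simeq\Z_p$ and is torsion-free, so $\Gamma_v\simeq\Z_p$ or $0$. This case is harder: $v\in S$ is ramified in $L$, and $\Gamma_v$ contains the nontrivial inertia group. Inertia maps to zero in the unramified quotient $\Gal(K^{(p)}_\infty/K)$, and its image in $\Gal(L'/K)$ is also zero since $\Gamma'_v=0$. But $\Gamma\hookrightarrow\Gal(K^{(p)}_\infty/K)\times\Gal(L'/K)$ because $L=K^{(p)}_\infty L'$, so inertia would be trivial. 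This contradicts $v\in S$. Hence $\Gamma'_v\neq0$ in either case, and \eqref{e:ptheta} holds.

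\textbf{Equivalence.} Since \eqref{e:ptheta} holds, in particular $p^L_{L'}(\dag_{A/L})\neq0$, so the analytic specialization \eqref{anspf} applies alongside the algebraic one \eqref{arithspf}:
\[
\varrho_{L/L'}\, p^L_{L'}\bigl((\mathscr L_{A/L})\bigr)=\vartheta_{L/L'}(\mathscr L_{A/L'}),\qquad
\varrho_{L/L'}\, p^L_{L'}\bigl(\CH_{\Lambda_\Gamma}(X_L)\bigr)=\vartheta_{L/L'}\CH_{\Lambda_{\Gamma'}}(X_{L'}).
\]
Dividing these is legitimate because $\Lambda_{\Gamma'}$ is a domain and $\varrho_{L/L'}$, $\vartheta_{L/L'}$ are nonzero principal ideals; here $\vartheta_{L/L'}\neq0$ by \eqref{e:ptheta}, and $\varrho_{L/L'}\neq0$ by \eqref{e:varrho}. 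Cancelling $\vartheta_{L/L'}$ and $\varrho_{L/L'}$ in the domain shows that $(\mathscr L_{A/L'})=\CH_{\Lambda_{\Gamma'}}(X_{L'})$ holds if and only if \eqref{e:pll'} holds.

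The zero-ideal cases need no separate treatment: cancellation of nonzero factors is valid in a domain even when the remaining ideals are $(0)$, and \eqref{e:nontor} is consistent with this. The only genuinely non-formal step is the local decomposition-group argument in the case $L=K^{(p)}_\infty L'$.
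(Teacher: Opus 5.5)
Your proof is correct and follows the paper's route. You get the non-vanishing from Lemma~\ref{l:5.2.2} by checking $\Gamma'_v\neq 0$, and your inertia argument in the case $L=K^{(p)}_\infty L'$ simply proves the paper's remark that $L'/K$ has the same ramification locus $S$; you then get the equivalence by cancelling the nonzero factors $\varrho_{L/L'}$ and $\vartheta_{L/L'}$ between \eqref{arithspf} and \eqref{anspf} in the domain $\Lambda_{\Gamma'}$, exactly as the paper does.
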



\begin{proof}
If $K_\infty^{(p)} \subset L'$, then $\Gamma'_v \neq 0$ for every place $v$. If $L = K_\infty^{(p)}L'$, then the ramification locus of $L'/K$ is identical to $S$, ensuring that for $v \in S$, $\Gamma'_v$ is non-trivial. Hence, by Lemma \ref{l:5.2.2}, the term $p_{L'}^L(\dag_{A/L}) \cdot \vartheta_{L/L'}$ is non-zero. By the specialization formulae \eqref{arithspf} and \eqref{anspf}, the identity (\ref{e:pll'}) is then equivalent to the Iwasawa Main Conjecture for $A/K$ over $L'/K$.
\end{proof}

\begin{lemma}\label{l:preparation}If $f\in\Z_p[[t_0,...,t_n]]$ satisfies $\mu(f(t_0,0,...,0))=\mu(f)$, then there is some non-negative integer $\lambda$ and some $u\in\Z_p[[t_0,...,t_n]]^*$ such that
$$p^{-\mu(f)}\cdot f=u\cdot (t_0^\lambda+\sum_{i=0}^{\lambda-1} a_i\cdot t_0^i),\;a_i\in (p,t_1,...,t_n)\subset
\Z_p[[t_1,...,t_n]],\; i=0,...,\lambda-1.$$
\end{lemma}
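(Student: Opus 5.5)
The plan is to reduce to the classical Weierstrass preparation theorem over the complete local ring $R:=\Z_p[[t_1,\dots,t_n]]$, with maximal ideal $\mathfrak m=(p,t_1,\dots,t_n)$, and to view $\Z_p[[t_0,\dots,t_n]]=R[[t_0]]$. First set $g:=p^{-\mu(f)}f$. By the definition of $\mu$, this is an element of $\Z_p[[t_0,\dots,t_n]]$ not divisible by $p$.

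The hypothesis $\mu(f(t_0,0,\dots,0))=\mu(f)$ says exactly that
\[
g(t_0,0,\dots,0)=p^{-\mu(f)}f(t_0,0,\dots,0)\in\Z_p[[t_0]]
\]
has $\mu$-invariant $0$. Since $\mu(f(t_0,0,\dots,0))\ge\mu(f)$ always, the hypothesis is precisely the statement that this specialization is not divisible by $p$. Reducing modulo $\mathfrak m$, the image $\bar g\in(R/\mathfrak m)[[t_0]]=\F_p[[t_0]]$ is the reduction mod $p$ of $g(t_0,0,\dots,0)$, and so it is nonzero. Let $\lambda\ge0$ be its $t_0$-adic order. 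Then $g$ is "$t_0$-distinguished of order $\lambda$" in the sense required by Weierstrass preparation over $R$: the coefficients of $t_0^i$ lie in $\mathfrak m$ for $i<\lambda$, and the coefficient of $t_0^\lambda$ is a unit of $R$.

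Next, apply the Weierstrass preparation theorem for power series in one variable over a complete Noetherian local ring (e.g.\ Bourbaki, Comm.\ Alg.\ VII \S3.8, or Lang's Algebra). This gives a unique factorization $g=u\cdot P$, where $u\in R[[t_0]]^*$ and $P=t_0^\lambda+\sum_{i<\lambda}a_it_0^i$ with $a_i\in\mathfrak m$. This is exactly the claimed form.

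The only genuinely substantive point is the identification of the hypothesis with the nonvanishing of $g$ modulo $\mathfrak m$, that is, the observation that reducing modulo $(t_1,\dots,t_n)$ and then modulo $p$ commute. The rest is the standard citation. If a self-contained argument is wanted instead, I would prove Weierstrass division by $t_0^\lambda$ via the usual $\mathfrak m$-adic contraction argument, which uses completeness of $R$.
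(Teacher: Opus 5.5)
Your proposal is correct and follows the paper's proof. Both arguments set $g=p^{-\mu(f)}f$ and note that the hypothesis means $g(t_0,0,\dots,0)\notin p\,\Z_p[[t_0]]$, so some $t_0$-coefficient of $g$ is a unit in $\Z_p[[t_1,\dots,t_n]]$; both then apply Weierstrass preparation over that complete local ring, and your identification of $\lambda$ as the $t_0$-order of $g$ modulo $(p,t_1,\dots,t_n)$ simply spells out what the paper leaves implicit.
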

\begin{proof}This is from the Weierstrass Preparation Theorem
\cite[VII, \S 8, Proposition 6]{bou}, because $p^{-\mu(f)}\cdot f(t_0,0,...,0)$ does not belong to $p\cdot \Z_p[[t_0]]$, so $p^{-\mu(f)}\cdot f$, as a power series in $t_0$ over $\Z_p[[t_1,...,t_n]]$,
 has some coefficient belonging to $\Z_p[[t_1,...,t_n]]^*$.
\end{proof}

\subsection{The $\mathscr P$-roots of two variable formal power series}\label{su:roots}
Let $\mathscr O$ denote the ring of integers in $\bar\Q_p$ and let $\mathscr P\subset \mathscr O$ be the maximal ideal. If $v_p$ denotes the valuation on $\bar\Q_p$ such that $v_p(p)=1$, then 
$\mathscr P$ consists of $x$ having $v_p(x)>0$.

Let $\O$ be the ring of integers in a finite extension of $\Q_p$. We call a point $(a,b)\in\mathscr P\times \mathscr P$ a $\mathscr P$-root of $f\in\O[[t_0,t_1]]$, if $f(a,b)=0$. 

\begin{lemma}\label{l:intpts}Two formal power series $f,g\in\O[[t_0,t_1]]$ having infinitely many $\mathscr P$-roots in common have a non-unit common divisor in $\O[[t_0,t_1]]$.
\end{lemma}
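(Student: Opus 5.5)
We argue by contraposition: assume $f$ and $g$ have no non-unit common divisor in $\O[[t_0,t_1]]$ and show that their common $\mathscr P$-roots form a finite set. The ring $R:=\O[[t_0,t_1]]$ is a regular local ring, hence a UFD, of Krull dimension $3$. If $f$ or $g$ is a unit there are no roots at all, so both may be taken to be non-units. Since $f$ and $g$ share no irreducible factor, the ideal $(f,g)$ has height $2$, and $R/(f,g)$ has dimension $\le 1$.

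\emph{Step 1: reduce to an Artinian ring over $\O$.} Each minimal prime of $(f,g)$ is either $(\pi,h)$ with $\pi$ a uniformizer of $\O$, or a height-two prime $\mathfrak q$ with $\pi\notin\mathfrak q$, for which $R/\mathfrak q$ is a one-dimensional local domain finite over $\O$. A common $\mathscr P$-root $(a,b)$ gives a continuous $\O$-homomorphism $R\to\mathscr O$, $t_0\mapsto a$, $t_1\mapsto b$, which kills $(f,g)$. Its kernel is a prime $\mathfrak p\supset(f,g)$ with $\pi\notin\mathfrak p$, because the map lands in characteristic $0$. So every root arises from a prime of the form $\mathfrak q$ above. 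It is therefore enough to show two things: $R/(f,g)\otimes_\O \mathrm{Frac}(\O)$ is finite-dimensional over $\mathrm{Frac}(\O)$, and it has only finitely many $\bar\Q_p$-points.

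\emph{Step 2: finiteness via Weierstrass preparation.} After a linear change of variables $t_0\mapsto t_0+t_1^N$ (or the standard trick of replacing $t_1$ by $t_1+t_0^N$), we may assume $f$ is regular in $t_1$ modulo $\pi$, that is, $f(0,t_1)\not\equiv0 \pmod{\pi}$. First divide $f$ by its $\pi$-power; this is harmless, since a power of $\pi$ has no $\mathscr P$-roots. If $f\equiv0\pmod{\pi}$ persists, we instead use the factorization $f=\pi^\mu f'$ together with the coprimality of $f'$ with $g$, which is the same situation. By the Weierstrass preparation theorem (as in Lemma \ref{l:preparation}) we can write $f=u\cdot P$ with $u$ a unit and $P\in\O[[t_0]][t_1]$ a distinguished polynomial of degree $\lambda$. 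Then $R/(f)\cong\O[[t_0]][t_1]/(P)$ is free of rank $\lambda$ over $\O[[t_0]]$.

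Let $\Delta(t_0)\in\O[[t_0]]$ be the norm of the image of $g$, namely the determinant of multiplication by $g$ on this free module; equivalently, $\Delta$ is the resultant $\mathrm{Res}_{t_1}(P,g)$. Because $f$ and $g$ are coprime, $\Delta\neq0$. The justification is that $R/(f)$ is a finite torsion-free $\O[[t_0]]$-module and $g$ is a non-zero-divisor on it, since $g$ lies in no associated prime of $(f)$, those primes being principal $(h)$ with $h\mid f$. For every common root $(a,b)$ we have $\Delta(a)=0$. By the $p$-adic Weierstrass preparation theorem in one variable, a nonzero element of $\O[[t_0]]$ has only finitely many zeros in $\mathscr P$. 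So only finitely many $a$ occur. For each such $a$, $b$ is a root of the monic polynomial $P(a,t_1)$ of degree $\lambda$, so there are at most $\lambda$ values of $b$. This gives finitely many common roots.

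\emph{Main obstacle.} The delicate point is arranging the regularity of $f$ in one variable so that preparation applies. The change of variables has to preserve the $\mathscr P$-root set bijectively, and $t_0\mapsto t_0+t_1^N$ does, because it maps $\mathscr P^2$ to itself bijectively. We also have to deal with powers of $\pi$ dividing $f$, which we handle by first removing the $\pi$-part, since $\pi$ has no roots. A second point needing care is showing $\Delta\neq0$ from coprimality, which uses that $R$ is a UFD. With those in place, the conclusion follows: infinitely many common $\mathscr P$-roots force a non-unit common divisor.
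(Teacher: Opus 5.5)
Your proof is correct. The paper gives no argument of its own for this lemma; it only points to the proof of Lemma 6.6.2 in \cite{tan24}, so there is no in-paper proof to match yours against. Your write-up therefore serves as a self-contained replacement. Its engine is Weierstrass preparation over $\O[[t_0]]$ followed by counting roots fibre by fibre, which is the same technology the paper uses in Lemmas \ref{l:preparation} and \ref{l:root}.

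The actual proof is Step 2. You replace $f$ by $\pi^{-\mu}f$, which has the same roots and is still coprime to $g$. This must come before the coordinate change, since no change of variables helps while $\pi\mid f$. You then apply $t_0\mapsto t_0+t_1^N$. This is not linear, but it is an automorphism of $\O[[t_0,t_1]]$ inducing a bijection of $\mathscr P\times\mathscr P$, which is all you need. Next you write $f=uP$. The norm $\Delta\in\O[[t_0]]$ of $g$ on the free module $\O[[t_0]][t_1]/(P)$ is nonzero because $g$ is a nonzerodivisor modulo $f$.

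One line is missing at the step ``$\Delta(a)=0$ for every common root''. By Weierstrass division write $g=qP+r$ with $r\in\O[[t_0]][t_1]$. At a common root $(a,b)$ you have $P(a,b)=0$, because $u(a,b)$ is a unit, and hence $r(a,b)=g(a,b)=0$. So $r(a,t_1)$ is a non-unit in $\bar\Q_p[t_1]/(P(a,t_1))$, since it is killed by the evaluation $t_1\mapsto b$. Its norm is therefore zero. That norm equals $\Delta(a)$, because determinants commute with base change along $t_0\mapsto a$.

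Step 1 is never used. Step 2 bounds the root set directly; it does not prove the finite-dimensionality that Step 1 announces. You can drop Step 1, or finish it without any coordinates, as follows.
\begin{itemize}
\item The ring $M=R/(f,g)$ has dimension $\le 1$. Let $\overline M$ be the quotient of $M$ by its $\pi$-power torsion.
\item Every minimal prime of the support of $\overline M$ is associated to $\overline M$, so it avoids $\pi$ and has coheight $\le 1$. Hence $\overline M/\pi\overline M$ has finite length.
\item By the complete Nakayama lemma, $\overline M$ is finite over $\O$.
\item Each common root gives a distinct $\O$-algebra map $M[1/\pi]=\overline M[1/\pi]\to\bar\Q_p$. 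There are at most $\dim_{\O[1/\pi]}\overline M[1/\pi]$ such maps.
\end{itemize}
That version is shorter and coordinate-free. Yours is elementary and gives an explicit bound: $\lambda$ times the number of zeros of $\Delta$ in $\mathscr P$.
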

\begin{proof} See the proof of  \cite[Lemma 6.6.2]{tan24}.
\end{proof}



\begin{lemma}\label{l:root}Let $f,g\in\Z_p[[t_0,t_1]]$ be such that $\mu(f)=\mu(f(t_0,0))=\mu(g)=\mu(g(t_0,0))$. If for infinitely many $\zeta\in\Bmu_{p^\infty}$,
in $\Z_p[\zeta][[t_0]]$ the ideals 
\begin{equation}\label{e:fgzeta}
(f(t_0,\zeta-1))=(g(t_0,\zeta-1)),
\end{equation}
then in $\Z_p[[t_0,t_1]]$ the ideals
$$(f)=(g).$$
\end{lemma}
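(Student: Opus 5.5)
First normalize. Put $\mu:=\mu(f)=\mu(g)$ and replace $f,g$ by $p^{-\mu}f$, $p^{-\mu}g$. The hypothesis \eqref{e:fgzeta} survives, since ideals are unchanged by the scalar $p^{\mu}$. The assumptions $\mu(f(t_0,0))=\mu(f)$ and $\mu(g(t_0,0))=\mu(g)$ let us apply Lemma~\ref{l:preparation} with $n=1$. This gives
$$f=u\cdot P(t_0),\qquad g=u'\cdot Q(t_0),$$
with $u,u'\in\Z_p[[t_0,t_1]]^*$. Here $P$ and $Q$ are distinguished (Weierstrass) polynomials in $t_0$ over $\Z_p[[t_1]]$, of degrees $\lambda$ and $\lambda'$, whose non-leading coefficients lie in $(p,t_1)$. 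It therefore suffices to show $P=Q$.

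Specialize at $t_1=\zeta-1$. The element $\zeta-1$ lies in the maximal ideal of $\Z_p[\zeta]$, so $P(t_0,\zeta-1)$ is a distinguished polynomial of degree $\lambda$ over $\Z_p[\zeta]$: its lower coefficients lie in the maximal ideal $(p,\zeta-1)=(\zeta-1)$. The same holds for $Q$ with degree $\lambda'$. Also $u(t_0,\zeta-1)$ and $u'(t_0,\zeta-1)$ remain units. So \eqref{e:fgzeta} says two distinguished polynomials generate the same ideal in $\Z_p[\zeta][[t_0]]$. By uniqueness in Weierstrass preparation over the complete DVR $\Z_p[\zeta]$, they are equal:
$$P(t_0,\zeta-1)=Q(t_0,\zeta-1)$$
for infinitely many $\zeta$. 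In particular $\lambda=\lambda'$.

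Now compare coefficients. Write $P-Q=\sum_{i<\lambda}c_i(t_1)t_0^i$ with $c_i\in\Z_p[[t_1]]$. Each $c_i$ vanishes at infinitely many points $\zeta-1$ of the open unit disc. A nonzero element of $\Z_p[[t_1]]$ has only finitely many zeros in $\mathscr P$, by one-variable Weierstrass preparation (it is $p^m$ times a unit times a distinguished polynomial). Hence every $c_i=0$, so $P=Q$ and $(f)=(g)$.

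An alternative route is Lemma~\ref{l:intpts}: extract common factors and induct on $\lambda$. The coefficient comparison is more direct, so I would use it. The only delicate point, and the step I expect to need the most care, is checking that specializing a Weierstrass polynomial at $t_1=\zeta-1$ still gives a distinguished polynomial. This is what makes the uniqueness statement apply. It holds because the lower coefficients lie in $(p,t_1)$, which maps into the maximal ideal of $\Z_p[\zeta]$.
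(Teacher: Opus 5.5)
Your proof is correct, and it takes a different route from the paper's.

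\emph{The paper's route.} It first extracts $h=\gcd(f,g)$ and writes $f=hf_1$, $g=hg_1$. It then checks that the $\mu$-hypothesis descends to the coprime cofactors, using additivity of $\mu$ together with inequalities such as $\mu(h(t_0,0))\ge\mu(h)$. After Weierstrass-preparing $f_1$ and $g_1$, it argues by contradiction. If $\deg_{t_0}P_{f_1}>0$, then for infinitely many $\zeta$ one cancels $h(t_0,\zeta-1)\neq 0$. The $\mathscr P$-roots of $f_1(t_0,\zeta-1)$ then become roots of $g_1(t_0,\zeta-1)$. Lemma~\ref{l:intpts}, whose proof is imported from \cite{tan24}, turns these infinitely many common roots into a common non-unit factor.

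\emph{Your route.} You prepare $f$ and $g$ themselves, and observe that $t_1\mapsto\zeta-1$ sends a Weierstrass polynomial over $\Z_p[[t_1]]$ to a distinguished polynomial over $\Z_p[\zeta]$. Uniqueness in the one-variable preparation theorem then gives $P(t_0,\zeta-1)=Q(t_0,\zeta-1)$ exactly. You finish coefficientwise, using that a nonzero element of $\Z_p[[t_1]]$ has only finitely many zeros in $\mathscr P$.

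\emph{Comparison.} Your argument needs neither the gcd and $\mu$-bookkeeping nor the two-variable common-root lemma, so it is shorter and fully self-contained. It is the same mechanism the paper itself uses in \S\ref{su:d>2}: uniqueness of the distinguished polynomial, followed by a difference that vanishes under infinitely many specializations. The paper's version instead isolates the geometric input, namely that infinitely many common $\mathscr P$-roots force a common factor, in the form in which it appears in \cite{tan24}.

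Two cosmetic points. First, set aside the case $\mu=\infty$, where $f=g=0$, before dividing by $p^{\mu}$. Second, at $\zeta=1$ the maximal ideal of $\Z_p[\zeta]$ is $(p)$, not $(\zeta-1)$; neither affects the argument.
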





\begin{proof} Let $Z\subset \Bmu_{p^\infty}$ be the set of $\zeta$ satisfying \eqref{e:fgzeta}. Let $h = \text{g.c.d.}(f, g)$ and write $f = h \cdot f_1$ and $g = h \cdot g_1$ where $f_1, g_1$ are relatively prime. First, we show that the hypothesis on $\mu$-invariants holds for $f_1$ and $g_1$.
Indeed, since $\mu$ is additive for products in $\Z_p[[t_0,t_1]]$, we have
\[
\mu(f)=\mu(h)+\mu(f_1),\qquad \mu(g)=\mu(h)+\mu(g_1).
\]
After specializing $t_1=0$, we likewise have
\[
\mu(f(t_0,0))=\mu(h(t_0,0))+\mu(f_1(t_0,0)),\qquad
\mu(g(t_0,0))=\mu(h(t_0,0))+\mu(g_1(t_0,0)).
\]
Using the assumptions $\mu(f)=\mu(f(t_0,0))$ and $\mu(g)=\mu(g(t_0,0))$, we get
\begin{align}
\mu(h)+\mu(f_1) &= \mu(h(t_0,0))+\mu(f_1(t_0,0)), \label{e:mu1}\\
\mu(h)+\mu(g_1) &= \mu(h(t_0,0))+\mu(g_1(t_0,0)). \label{e:mu2}
\end{align}
On the other hand, specialization cannot increase the $p$-divisibility, hence
\[
\mu(h(t_0,0))\ge \mu(h),\qquad \mu(f_1(t_0,0))\ge \mu(f_1),\qquad \mu(g_1(t_0,0))\ge \mu(g_1).
\]
Comparing with \eqref{e:mu1} forces equality in both inequalities for $h$ and $f_1$, i.e.
\[
\mu(h(t_0,0))=\mu(h)\quad\text{and}\quad \mu(f_1(t_0,0))=\mu(f_1).
\]
Similarly, comparing with \eqref{e:mu2} gives
\[
\mu(g_1(t_0,0))=\mu(g_1).
\]
Hence, $f_1,g_1$ satisfy the hypothesis of Lemma \ref{l:preparation}, and we can write $f_1 = u_1 \cdot P_{f_1}(t_0)$ and $g_1 = u_2 \cdot P_{g_1}(t_0)$, where $u_1, u_2$ are units and $P_{f_1}, P_{g_1}$ are distinguished polynomials in $t_0$ of degrees $m$ and $n$, respectively. Note that $h(t_0,\zeta-1)\neq 0$ for all but finitely many $\zeta$. Indeed, write
$$
h(t_0,t_1)=\sum_{i\ge 0} a_i(t_1)\,t_0^i\qquad (a_i(t_1)\in\Z_p[[t_1]]).
$$
Since $h\neq 0$, there exists $i$ with $a_i(t_1)\neq 0$.
A nonzero one-variable power series $a_i(t_1)\in\Z_p[[t_1]]$ cannot vanish at
infinitely many distinct points $t_1=\zeta-1$ with $\zeta\in\Bmu_{p^\infty}$
(because $\zeta-1\to 0$ and a nonzero $p$-adic analytic function has only finitely
many zeros in a small disk unless it is identically $0$).
Hence $a_i(\zeta-1)\neq 0$ for all but finitely many $\zeta$, and therefore
$h(t_0,\zeta-1)\neq 0$ in $\Z_p[\zeta][[t_0]]$ for all but finitely many $\zeta$.

Let $Z_0\subset Z$ be an infinite subset such that $h(t_0,\zeta-1)\neq 0$ for all $\zeta\in Z_0$.

Assume $m > 0$. Then for every $\zeta$ in $Z_0$, the specialization $f_1(t_0, \zeta-1)$ has $m$ roots (counted with multiplicity) in $\mathscr{P}$. Since $(f(t_0, \zeta-1)) = (g(t_0, \zeta-1))$, these are also roots of $g_1(t_0, \zeta-1)$, providing common roots of the form $(c, \zeta-1) \in \mathscr{P} \times \mathscr{P}$ for the two-variable functions $f_1$ and $g_1$. Since there are infinitely many distinct such $\zeta$, this yields an infinite set of common roots to $f_1$ and $g_1$, contradicting the hypothesis that they have no common divisors by Lemma \ref{l:intpts}. Thus, $m = 0$. By a symmetric argument, $n = 0$. It follows that $f_1, g_1 \in \mathbb{Z}_p[[t_0, t_1]]^*$, and hence $(f) = (g)$.
\end{proof}

\begin{remark}
Consider the two-variable power series
$$
f:=t_1^2+p^3\,t_0 t_1 + p^3(t_0+p), \qquad
g:=t_1^2 + p^3(t_0+p) \in \mathbb{Z}_p[[t_0,t_1]].
$$
They are relatively prime in $\mathbb{Z}_p[[t_0,t_1]].$

Let $\zeta\in\mu_{p^\infty}$ with $\zeta \neq 1$ and put $\delta=\zeta-1$.
In the ring of integers $\mathcal{O}_\zeta$ one has $(p)=(\delta^e)$ with
$e=[\mathbb{Q}_p(\zeta):\mathbb{Q}_p]$, hence $p^3\in(\delta^2).$ It follows that in
$\mathcal{O}_\zeta[[t_0]]$ both $f(t_0,\zeta-1)$ and $g(t_0,\zeta-1)$ generate the ideal
$(\delta^2),$ so
$$
\bigl(f(t_0,\zeta-1)\bigr)=\bigl(g(t_0,\zeta-1)\bigr).
$$
On the other hand, for $\zeta=1$ one has $f(t_0,0)=g(t_0,0)=p^3(t_0+p),$ but $f$ and $g$
do not generate the same ideal in $\mathbb{Z}_p[[t_0,t_1]].$ Indeed, if they did then $f=ug$ for a
unit $u\in \mathbb{Z}_p[[t_0,t_1]]^\times.$ Writing
$u=u_0(t_0)+u_1(t_0)t_1+u_2(t_0)t_1^2+\cdots$ and comparing the coefficient of $t_1$ gives
$u_1(t_0)\,p^3(t_0+p)=p^3 t_0,$ so $(t_0+p)\mid t_0,$ a contradiction.

This shows that Lemma \ref{l:root} would be false without the additional hypothesis on
$\mu$-invariants. Accordingly, that hypothesis is essential for the results that follow.
\end{remark}

\subsection{The induction}\label{su:induction}Write $L_0$ for $K_\infty^{(p)}$ and denote
$\tilde L=L_0L$. In view of Corollary \ref{c:5.2.2}, it is sufficient to show that \eqref{e:imc} holds for $\tilde L/K$, because then \eqref{e:imc} for $L/K$ follows by taking $p^{\tilde L}_L$.
Thus, without loss of generality, for proving \eqref{e:imc},
we may assume that $L=\tilde L$. 
If $d=1$, then $L=L_0$, and \eqref{e:imc} is proven in \cite{lltt16}.

For $d\geq 2$, we shall use the induction on $d$. Since $\Psi_1:=\Gal(L/L_0)$ is a maximal (in the sense that $\Gamma/\Psi_1$ is torsion free) $\Z_p$ submodule in $\Gamma$ of rank $d-1$, we can find a maximal
$\Z_p$ submodule $\Psi_0\subset\Gamma$ of rank $1$ such that $\Gamma$ is the direct sum of $\Psi_1$ and $\Psi_0$. Galois theory says that if $L_1=L^{\Psi_0}$, then $L_1$ is a $\Z_p^{d-1}$-extension of $K$ such that $L=L_0L_1$, $L_0\cap L_1=K$. Denote $\Gamma_0=\Gal(L_0/K)$, $\Gamma_1=\Gal(L_1/K)$.
Let $\sigma_0$ and $\sigma_1,...,\sigma_{d-1}$ respectively be topological generators of $\Psi_0$ and $\Psi_1$, and put $t_i=\sigma_i-1$, $i=0,...,d-1$.

Suppose $d=2$. A character $\chi\in\hat\Gamma_1\subset \hat\Gamma$ gives rise to
a $\zeta\in\Bmu_{p^\infty}$ such that $\chi(\sigma_0)=1$, $\chi(\sigma_1)=\zeta$, and vice versa.
For each $f\in \Lambda_\Gamma=\Z_p[[t_0,t_1]]$, by identifying $\Z_p[\zeta][[t_0]]$ with $\Z_p[\zeta]\Lambda_{\Gamma_0}$, we identify $f(t_0,\zeta-1)$ with ${}^\chi f$. Thus, Theorem \ref{t:1st},
the Hypothesis and 
Lemma \ref{l:root} together imply that as ideals of $\Lambda_\Gamma$,
$$(\mathscr L_{A/L})=\mathrm{CH}_{\Lambda_\Gamma}(X_L).$$

\subsection{The $d>2$ case}\label{su:d>2} 
Again, write $L_0$ for $K_\infty^{(p)}$ and write $L=L_0L_1$, $L_1$ a $\Z_p^{d-1}$-extension of $K$ such that
$L_0\cap L_1=K$.
Let $\sigma_1,...,\sigma_{d-1}$ be topological generators of $\Gal(L/L_0)$,
let $\sigma_0$ be a topological generator of $\Gal(L/L_1)$, so that  $\sigma_0,\sigma_1,...,\sigma_{d-1}$ form a $\Z_p$-basis of $\Gamma$. Put $t_i=\sigma_i-1$.

Let $\eta_L\in\Lambda_\Gamma$ denote a generator of $\mathrm{CH}_{\Lambda_\Gamma}(X_L)$.
Let $m$ denote the common $\mu$-invariant of $\mathscr L_{A/L}$ and $\eta_L$. The Hypothesis together with Lemma \ref{l:preparation} implies that
$$p^{-m}\cdot \mathscr L_{A/L}(t_0,t_1,...,t_{d-1})=u_1\cdot \xi_0,\quad
p^{-m}\cdot \eta_{L}(t_0,t_1,...,t_{d-1})=u_2\cdot \eta_0,$$
where $u_1,u_2\in \Z_p[[t_0,t_1,...,t_{d-1}]]^*$ and $\xi_0$, $\eta_0$ are distinguished polynomials in $t_0$ over $\Z_p[[t_1,...,t_{d-1}]]$, in the sense that 
$$\xi_0=t_0^{\nu_1}+\sum_{i=0}^{\nu_1-1}a_i\cdot t_0^i,\;
\eta_0=t_0^{\nu_2}+\sum_{j=0}^{\nu_2-1}b_j\cdot t_0^j,\;\;a_i,b_j\in (p,t_1,...,t_{d-1})\subset \Z_p[[t_1,...,t_{d-1}]].$$

Consider a $\Z_p^{d-1}$-extension $L'$ of $K$ such that $L_0\subset L'\subset L$. The Hypothesis also holds for $L'/K$, since for $f\in \Lambda_{\Gamma}$, one has the inequalities
\begin{equation}\label{e:inequalities}
\mu(p^L_{L_0}(f))\geq \mu(p^L_{L'}(f))\geq \mu(f).
\end{equation}
Thus, in this situation, since $L_0\subset L'$, the induction hypothesis asserts that \eqref{e:imc} holds for $L'/K$, so by Corollary \ref{c:5.2.2},
\begin{equation}\label{e:ll'imc}
(p^L_{L'}(\mathscr L_{A/L}))=(p^L_{L'}(\eta_L)).
\end{equation}

Choose a topological generator $\sigma'_1$ of $\Gal(L/L')$, extend it to a $\Z_p$-basis $\sigma'_1,...,\sigma'_{d-1}$ of $\Gal(L/L_0)$, and write $s'_j=\sigma'_j-1$. 
Then $\Z_p[[s'_1,...,s'_{d-1}]]=\Z_p[[t_1,...,t_{d-1}]]$, and hence
$\xi_0$ and $\eta_0$ are distinguished polynomials in $t_0$ over $\Z_p[[s'_1,...,s'_{d-1}]]$. 
It follows that $p^L_{L'}(\xi_0)$ and $p^L_{L'}(\eta_0)$
are distinguished polynomials in $t_0$ over $\Z_p[[s'_2,...,s'_{d-1}]]$. Thus, \eqref{e:ll'imc} together with the uniqueness of the associated distinguished polynomial in the Weierstrass Preparation Theorem implies that
as elements in $\Lambda_\Gamma$, 
$$p^L_{L'}(\xi_0) =p^L_{L'}(\eta_0).$$
This shows $\xi_0-\eta_0$ belongs to $\ker(p^L_{L'})\subset \Lambda_\Gamma$. It follows that
$\xi_0-\eta_0$ is divisible by $s'_1$, which is a generator of the ideal $\ker(p^L_{L'})$.

Suppose $L''$ is also a $\Z_p^{d-1}$-extension of $K$, with $L_0\subset L''\subset L$. Let $\sigma''_1$ be a topological generator of $\Gal(L/L'')$ and denote $s''_1+1:=\sigma''_1$. Then $\xi_0-\eta_0$ is divisible by $s''_1$.
If $L''\neq L'$, then $s'_1$ and $s''_1$ are relatively prime in $\Lambda_\Gamma$.
Indeed, via the identification
\[
\Lambda_\Gamma \simeq \Z_p[[t_0,t_1,\dots,t_{d-1}]],
\]
both $s'_1$ and $s''_1$ lie in the regular local subring
$\Z_p[[t_1,\dots,t_{d-1}]]$ and are irreducible, since their linear terms
correspond to primitive vectors in $\Z_p^{d-1}$.
If they were associates, their linear parts would be proportional,
which would imply
$\Gal(L/L')=\Gal(L/L'')$, hence $L'=L''$.
Thus for $L'\neq L''$ they are non-associate irreducibles,
and therefore relatively prime in $\Lambda_\Gamma$. The assignment 
$L'\mapsto \Q_p\otimes L/L_0$ gives a one-to-one correspondence between the set of
$\Z_p^{d-1}$-extensions $L'/K$, with $L_0\subset L'\subset L$, and that of 
$\Q_p$-subspaces $\bar L\subset \Q_p\otimes L/L_0$, of dimension $d-2$, namely, the Grassmannian of $d-2$ dimensional subspaces in a $d-1$ dimensional $\Q_p$-vector space.  
When $d>2$, the set is infinite, so there are infinitely many distinct $L'$ of such type.
Hence, $\xi_0-\eta_0$, being divisible by infinitely many relatively prime elements in $\Lambda_\Gamma$, must be $0$.

\section{The isotrivial case}
\label{s:isotrivial}

In this section, we assume that $A(L)[p^\infty]$ is infinite. Under such condition,
$A/K$ is known to be isotrivial \cite{blv09} and since $A_{p^\infty}(\bar K)=\Q_p/\Z_p$, we have $A(L)[p^\infty]=A_{p^\infty}(\bar K)$. 

Suppose $A$ is a twist of a constant ordinary elliptic curve $B$ defined over $\mathbb{F}_q$.
Let $\varphi : \operatorname{Gal}(K^{\mathrm{sep}}/K) \longrightarrow \operatorname{Aut}(B)$ be the twisting cocycle. All endomorphisms of $B$ are defined over $\F_{q}$ \cite[Theorem 2(c)]{tat66}. Since the action of $\text{Gal}(K^s/K)$ on $\text{Aut}(B)$ is trivial, the cocycle relation reduces to $\varphi(\sigma\tau) = \varphi(\sigma)\varphi(\tau)$, hence $\varphi$ is a homomorphism.
Set $F := (K^{\mathrm{sep}})^{\ker(\varphi)}$. Then $\varphi$ is decomposed as
\begin{equation}\label{e:phidecom}
\xymatrix{\varphi:\Gal(K^s/K) \ar@{->>}[r] & \Gal(F/K)\ar@{^(->}[r]^-{\bar\varphi} & \mathrm{Aut}(B).}
\end{equation}
\subsection{The Iwasawa Main Conjecture}\label{su:imciso}
Here is the key lemma whose proof is given in \S\ref{su:iso3}.
\begin{lemma}\label{isolemma3}
If $A/K$ is not a constant curve, then $p=2$ and $F$ is an unramified quadratic intermediate extension of $L/K$.
\end{lemma}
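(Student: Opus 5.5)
The plan is to read off the Galois action on $A_{p^\infty}$ explicitly from the twist, and then play two facts against each other. The hypothesis that $A(L)[p^\infty]=A_{p^\infty}(\bar K)$ forces this action to factor through the torsion-free pro-$p$ group $\Gamma$. Semistability forces the twisting character to be unramified.

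First, since $B$ is ordinary, $B_{p^\infty}(\bar K)\simeq\Q_p/\Z_p$ (the étale part). $\mathrm{Aut}(B)$ acts on it through a character $\iota:\mathrm{Aut}(B)\to\Z_p^\times$, obtained from $\mathrm{End}(B)\otimes\Z_p\to\mathrm{End}(T_pB^{\acute et})=\Z_p$. This $\iota$ is injective, since a nontrivial automorphism of finite order $n$ acting trivially on the étale Tate module would make $1-\epsilon$ a nonzero endomorphism with infinite kernel. The Galois action on $A_{p^\infty}(\bar K)$ is then given by the character
\[
\rho=\rho_B\cdot(\iota\circ\varphi):\Gal(K^s/K)\to\Z_p^\times .
\]
Here $\rho_B$ is the unramified character through $\Gal(\bar\F_q/\F_q)$ sending Frobenius to the unit root $\alpha$ of $B$. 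The hypothesis $A(L)[p^\infty]=A_{p^\infty}(\bar K)$ means that $\rho$ kills $\Gal(K^s/L)$, so $\rho$ factors through $\Gamma\simeq\Z_p^d$.

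Second, I will show that $\varphi$ is unramified everywhere, so that $F/K$ is unramified. At each place $v$, $A$ becomes isomorphic to the constant (good reduction) curve $B$ over the extension cut out by $\varphi|_{D_v}$. If $\varphi$ were ramified at $v$, then $A/K_v$ would acquire good reduction only after a nontrivial ramified extension. Hence it would have additive (potentially good) reduction at $v$, since potentially good reduction that is not good is additive. This contradicts the standing assumption that $A$ is semistable everywhere. So $\varphi$ kills all inertia groups, and $F/K$ is unramified.

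Third, I will restrict to the geometric Galois group $H:=\Gal(K^s/K\bar\F_q)$. On $H$ the character $\rho_B$ is trivial, so $\iota\circ\varphi|_H=\rho|_H$ factors through $\Gamma$. Its image is therefore a finite subgroup of $\Z_p^\times$ that is also a quotient of a pro-$p$ group. For $p$ odd, the $p$-part of the torsion of $\Z_p^\times$ is trivial, so $\varphi|_H=1$. Then $F\subset K\bar\F_q$ is a constant field extension, and the twist is by a cocycle factoring through $\Gal(\bar\F_q/\F_q)$. Such a twist is again a curve over $\F_q$ base-changed to $K$, i.e.\ $A$ is constant. For $p=2$ the same argument leaves only $\iota\circ\varphi|_H\in\{\pm1\}$, hence also $\iota\circ\varphi$ itself has image in $\{\pm 1\}$ after the analogous argument on all of $\Gal(K^s/K)$. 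For that I will use that $\rho_B^{-1}\rho$ has finite image, lies in the torsion of $\Z_2^\times$, and that $\rho_B$ has torsion-free image when $\alpha\in 1+4\Z_2$; if not, I will adjust by a finite constant extension, which needs checking. Thus $F/K$ is at most quadratic. If $F=K$, then $A\simeq B$ is constant. Otherwise $F$ is a quadratic unramified extension cut out by $\iota\circ\varphi=\rho\rho_B^{-1}$.

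The remaining point, and the one I expect to be the main obstacle, is to show that $F\subset L$. We know $\rho$ factors through $\Gamma$, and $\rho_B$ factors through $\Gal(K\bar\F_q/K)$. To get $\ker(\iota\circ\varphi)\supset\Gal(K^s/L)$, I must show that $\rho_B$ is trivial on $\Gal(K^s/L)$, or at least that its restriction there is $\pm1$-valued compatibly. I would argue as follows. Since $F/K$ is unramified and quadratic, it is a subfield of the constant $\Z_2$-extension $K^{(2)}_\infty$, which is the unique unramified quadratic extension in the $p$-part. Here one also needs to rule out a geometric unramified quadratic extension. That case is excluded because such an $F$ would make $\rho|_H=\iota\circ\varphi|_H$ nontrivial of order $2$, while $\rho(\Gamma)$ restricted to the image of $H$ must be torsion-free: the geometric part of $\Gamma$ is a $\Z_2$-module mapping into $1+4\Z_2$ after replacing $\rho$ by $\rho^2$. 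Then I would check that $L\supset K^{(2)}_\infty$ in the intended application (where $L=\tilde L$), or else deduce $F\subset L$ directly from $\rho$ and $\rho_B$ both factoring through $\Gamma$.
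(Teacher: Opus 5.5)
\textbf{Verdict: there is a genuine gap in your last step, the inclusion $F\subset L$.}

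Your Steps 1--3 are essentially fine. For odd $p$, restricting $\rho=\rho_B\cdot(\iota\circ\varphi)$ to the geometric Galois group $H$ and using that $\Z_p^\times$ has no $p$-torsion is a clean alternative to the paper's case split via Lemmas~\ref{isolemma2} and~\ref{l:aut}. For $p=2$, the bound $[F:K]\le 2$ is immediate: $\iota\circ\varphi$ lands in a finite subgroup of $\Z_2^\times$, hence in $\{\pm1\}$. No condition on $\alpha$ and no constant-field adjustment is needed.

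The argument you sketch for $F\subset L$ is wrong, for three reasons.
\begin{enumerate}
\item An unramified quadratic extension of $K$ need not be $K\F_{q^2}$. $K$ can have non-constant unramified quadratic (Artin--Schreier) extensions.
\item Your exclusion of these fails. A torsion-free pro-$2$ group, such as the image of $H$ in $\Gamma\simeq\Z_2^d$, certainly surjects onto $\{\pm1\}$, so nothing prevents $\rho|_H$ from having order $2$. Passing to $\rho^2$ simply discards this information.
\item The conclusion you aim for is self-defeating. If $F\subset K^{(2)}_\infty$, then $\varphi$ factors through $\Gal(\bar\F_q/\F_q)$, and your own Step 3 shows $A$ is constant. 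For non-constant $A$, $F$ is necessarily a \emph{non-constant} unramified quadratic extension, which is exactly the case the lemma is about.
\end{enumerate}
Also, you may not replace $L$ by $\tilde L$ here: the lemma concerns the given $L$.

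The missing idea is to pass through $FL$.
\begin{enumerate}
\item Both $\rho$ (by hypothesis) and $\iota\circ\varphi$ (by definition of $F$) are trivial on $\Gal(K^s/FL)$. Hence so is $\rho_B=\rho\cdot(\iota\circ\varphi)^{-1}$, i.e.\ $K\mathcal F_2=K^{(2)}_\infty\subset FL$.
\item $\rho_B$ factors through $\Gal(K^{(2)}_\infty/K)\simeq\Z_2$ and is injective there, because $\alpha$ is not a root of unity. So $\rho_B(\Gal(K^s/L))$ is either trivial or infinite.
\item But $\rho_B(\Gal(K^s/L))$ is a quotient of $\Gal(FL/L)$, which has order at most $2$. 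Hence $\rho_B$ is trivial on $\Gal(K^s/L)$, so $K^{(2)}_\infty\subset L$ holds automatically.
\item Therefore $\iota\circ\varphi=\rho\rho_B^{-1}$ factors through $\Gamma$, and $F\subset L$.
\end{enumerate}
This is essentially how the paper obtains $F\subset\M L=L$ in Lemma~\ref{isolemma2}, combining $K\mathcal F_p\subset FL$ with Lemma~\ref{isolemma0}.

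A minor point on Step 2: the implication ``$\varphi$ ramified at $v$ $\Rightarrow$ $A$ does not have good reduction at $v$'' should be justified by N\'eron--Ogg--Shafarevich at some $\ell\ne p$, as in Lemma~\ref{l:Funram}.
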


Following the lemma, to prove the Iwasawa Main Conjecture, which is known to hold for constant curves, we may assume that $F/K$ is an unramified quadratic intermediate extension of $L/K$.
Write $\Phi$ for $\Gal(L/F)$.

For a $\Lambda_\Gamma$-module $W$, define the $\varphi$-twist $\tensor[^{[\varphi]}]W{}$ to be the $\Lambda_\Gamma$-module
whose underlying $\Z_p$-module is the same as $W$, while if $P\in \tensor[^{[\varphi]}]W{}$ corresponds to
an element $Q\in W$, then for $\tau\in\Gal(K^s/K)$, $^\tau P$ is the element corresponding to
$\varphi(\tau)(^\tau Q)$.

In particular, as $\Lambda_\Phi$-modules, $\tensor[^{[\varphi]}]W{}=W$. Also, if $f\cdot W=0$, for some
$f\in\Lambda_\Gamma$, then
$$f_\varphi\cdot \tensor[^{[\varphi]}]W{}=f_{\varphi^{-1}}\cdot \tensor[^{[\varphi]}]W{}=0,$$
because $\varphi=\varphi^{-1}$.
Thus, for finitely generated $\Lambda_\Gamma$-modules $W$,
\begin{equation}\label{e:varphich}
\mathrm{CH}_{\Lambda_\Gamma}(\tensor[^{[\varphi]}]W{})=\mathrm{CH}_{\Lambda_\Gamma}(W)_\varphi.
\end{equation}
Here, for an ideal $I$ in $\Lambda_\Gamma$, $I_\varphi$ denotes the image of $I$ under the ring automorphism
$f\mapsto f_\varphi$.

The following two lemmas are proved in \S\ref{su:lemmas} and \S\ref{su:lemL}.

\begin{lemma}\label{l:xvarphi} Let $A$ be a non-constant elliptic curve. As $\Lambda_\Gamma$-modules,
\begin{equation}\label{e:phitwist}
\tensor[^{[\varphi]}]X{_{B/L}}=X_{A/L}.
\end{equation}
\end{lemma}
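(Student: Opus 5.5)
The plan is to realize the twist at the level of Galois modules and then carry it through the Selmer group. By Lemma \ref{isolemma3} and the reduction above, $p=2$ and $F/K$ is an unramified quadratic subextension of $L/K$. Then $\varphi$ factors through $\Gal(F/K)\subset\Gamma$, and its image is $\{\pm1\}\subset\Aut(B)$, since $\bar\varphi$ is injective with image of order $2$ and $-1$ is the only automorphism of order $2$. In particular $\varphi$ takes values in the center of $\Aut(B)$ and commutes with all endomorphisms of $B$.

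First, I will fix an isomorphism $\iota:B\to A$ over $F$ (equivalently over $K^s$) such that $\iota^{-1}\circ{}^\tau\iota=\varphi(\tau)$ for all $\tau$. For every algebraic extension $M\supset F$ inside $K^s$ this gives bijections $B(M)\to A(M)$. Under these bijections the $\tau$-action on $A$ corresponds to $Q\mapsto\varphi(\tau)({}^\tau Q)$ on $B$. The same holds for $B_{p^\infty}$ and for the local points $B(L_{\tilde w})$.

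Next, I will apply this to Galois cohomology. Since $F\subset L$, the Galois group $\Gal(K^s/L)$ lies in $\ker\varphi$. So $\iota$ induces $\Gal(K^s/L)$-equivariant isomorphisms $B_{p^\infty}\simeq A_{p^\infty}$, and likewise locally at every place $\tilde w$ of $L$. Hence $\iota_*$ gives isomorphisms $\coh^1(L,B_{p^\infty})\simeq\coh^1(L,A_{p^\infty})$ and $\coh^1(L_{\tilde w},B)\simeq\coh^1(L_{\tilde w},A)$ that commute with the local restriction maps and with the Kummer maps. Therefore $\iota_*$ carries $\Sel_{p^\infty}(B/L)$ onto $\Sel_{p^\infty}(A/L)$.

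It remains to compare the $\Gamma$-actions, and this step needs the most care. For $\tau\in\Gal(K^s/K)$ and a cocycle $c$, the conjugation action ${}^\tau c(g)={}^\tau\!\big(c(\tau^{-1}g\tau)\big)$ is transported by $\iota$ to $\varphi(\tau)\circ{}^\tau(\cdot)$. This uses that $\varphi$ is a homomorphism and that $\varphi(\tau)$ is central, so the twist is compatible with conjugation. Consequently $\Sel_{p^\infty}(A/L)$ is isomorphic to the $\varphi$-twist of $\Sel_{p^\infty}(B/L)$ as a $\Gamma$-module. I expect the main obstacle to be checking this carefully, including the sign and that the local conditions are preserved; since $\varphi(\tau)=\pm1$ acts as multiplication by $\pm1$, which commutes with everything, this should be straightforward.

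Finally, I will take Pontryagin duals. The dual of a $\varphi$-twisted module is the $\varphi^{-1}$-twist of the dual, and $\varphi^{-1}=\varphi$. This gives ${}^{[\varphi]}X_{B/L}\simeq X_{A/L}$ as $\Lambda_\Gamma$-modules, which is \eqref{e:phitwist}. The isomorphism extends from $\Z_p[\Gamma]$ to $\Lambda_\Gamma$ by continuity, since the actions are continuous and compact.
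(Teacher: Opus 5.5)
Your proposal follows the paper's proof. You realize $A$ as the $\varphi$-twist of $B$ through an isomorphism $\iota$ defined over $F\subset L$. You then use naturality, $g_A\circ\iota_*=\iota_*\circ[\varphi(g)]_*\circ g_B$ with $[\varphi(g)]_*=\varphi(g)=\pm1$, check that the local Kummer images correspond, and dualize using $\varphi^{-1}=\varphi$.

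One correction is needed. Since $p=\mathrm{char}\,K$, the Selmer group is defined with flat cohomology $\coh^1_{\mathrm{fl}}(L,A_{p^\infty})$, because multiplication by $p$ is inseparable and so the Kummer sequence is not exact on $K^s$-points. Your cocycle computation on $\Gal(K^s/L)$ should therefore be replaced by the functorial identity $g_H\circ f_*=({}^gf)_*\circ g_G$, applied to the group-scheme isomorphism $\iota$, exactly as the paper does. Also, $\Gal(F/K)$ is a quotient of $\Gamma$, not a subgroup.
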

The lemma, the equality \eqref{e:varphich}, and the restriction formula together imply
\begin{equation}\label{e:xphi}
\mathrm{CH}_{\Lambda_\Phi}(X_{B/L})\cdot \Lambda_\Gamma=\mathrm{CH}_{\Lambda_\Gamma}(X_{B/L})^\Gamma_\Phi\cdot \Lambda_\Gamma=\mathrm{CH}_{\Lambda_\Gamma}(X_{B/L})\cdot \mathrm{CH}_{\Lambda_\Gamma}(X_{A/L}).
\end{equation}

\begin{lemma}\label{l:prod}
In $\Lambda_\Gamma$, the ideals
$$((\mathscr L_{B/L/K})_\varphi)=(\mathscr L_{A/L/K}).$$
\end{lemma}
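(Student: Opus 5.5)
We prove Lemma~\ref{l:prod} through the interpolation property \eqref{e:intp}, together with the uniqueness principle of Monsky recalled in the introduction. Put $\Phi=\Gal(L/F)$. Since $F/K$ is an unramified quadratic subextension of $L/K$, the homomorphism $\varphi$ factors through $\Gal(F/K)\simeq \Z/2$. Its image is $\{\pm1\}\subset\mathrm{Aut}(B)$, and as a character $\Gal(F/K)\to\{\pm1\}\subset\Z_2^\times$ it lies in $\widehat{\Gamma/\Phi}$ once $-1$ is read as a root of unity of $2$-power order. Here $p=2$ by Lemma~\ref{isolemma3}. For every $\omega\in\hat\Gamma$ the definition of $f_\varphi$ gives
\[
\omega\bigl((\mathscr L_{B/L/K})_\varphi\bigr)=(\omega\varphi)(\mathscr L_{B/L/K}).
\]
So it suffices to compare $(\omega\varphi)(\mathscr L_{B/L})$ with $\omega(\mathscr L_{A/L})$ for all $\omega$ outside a proper Monsky-closed set, up to a factor that is visibly the value of a fixed unit of $\Lambda_\Gamma$.

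\textbf{Step 1: the $L$-values.} The Galois representation of $A$ is that of $B$ twisted by the quadratic character $\varphi$, namely $V_p(A)\simeq V_p(B)\otimes\varphi$. This holds because $-1$ acts as the scalar $-1$ on the Tate module. Hence
\[
L_{A/K}(\omega,s)=L_{B/K}(\omega\varphi,s)
\]
as Euler products at every place. Since $F/K$ is unramified, $A$ and $B$ have the same reduction type everywhere: good ordinary, with $\alpha_v^A=\varphi([v])\alpha_v^B$. In particular $\varphi$ does not change conductors, and the Artin conductors satisfy $D_{\omega\varphi}=D_\omega$.

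\textbf{Step 2: the fudge factors \eqref{e:fudge}.} We compare $\star^B_{\omega\varphi}$ with $\star^A_\omega$ term by term.
\begin{enumerate}
\item $\dag_{A/L}=\dag_{B/L}$, since both curves have good reduction everywhere, and so both are products of the same $m_v$ factors.
\item $\deg\Delta_A=\deg\Delta_B=0$, and $\kappa$ is common to both.
\item The factors $\Xi_S$ agree, because $\alpha^A_v\,\omega([v])=\alpha^B_v\,(\omega\varphi)([v])$.
\item The factors $\alpha_{D}$ differ by $\prod_{v}\varphi([v])^{\ord_vD_\omega}$, which equals $\varphi$ evaluated on the class of $D_\omega$.
\item The Gauss sums satisfy $\tau_{\omega\varphi}=\tau_\omega\cdot\varphi(\mathfrak d)$ for an unramified character. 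We use the analogue of \cite[(16)]{tan24}, as in the computation of $\tau_{\omega^{-1}\chi^{-1}}$ in \S\ref{ss:pfanchi}; the relevant divisor is of the form $2\kappa-2+\deg D_\omega$, twisted by Frobenius.
\end{enumerate}
Combining (4) and (5), the discrepancy is $\varphi$ evaluated on the canonical class plus $D_\omega$, together with $\pm$ signs coming from the Frobenius power. We expect these to cancel or to reduce to $\omega(u)$ for a unit $u\in\Lambda_\Gamma$ (a power of $\mathsf F_q$ times $\pm1$). This gives
\[
\omega\bigl((\mathscr L_{B/L})_\varphi\bigr)=\omega(u)\,\omega(\mathscr L_{A/L})
\]
for all $\omega$ with $\omega(\dag_{A/L})\neq0$. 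Monsky's theorem then yields $(\mathscr L_{B/L})_\varphi=u\,\mathscr L_{A/L}$, which is the required equality of ideals.

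\textbf{Main obstacle.} The delicate step is Step 2, namely the bookkeeping of the Gauss sum and $\alpha_D$ factors under twisting by $\varphi$, which must produce a factor interpolated by a unit rather than an arbitrary constant. If the direct comparison is awkward, we would instead deduce the relation from the construction of $\mathscr L$ in \cite[Definition~3.4.1]{tan24}, via the functional equation \eqref{e:anfe}. Alternatively, we would use the restriction formula \eqref{e:anrest} to $F$, where $A\simeq B$, so that $\mathscr L_{A/L/F}=\mathscr L_{B/L/F}$. This gives $\mathscr L_{B}\,(\mathscr L_{B})_\varphi$ associate to $\mathscr L_{A}\,(\mathscr L_{A})_\varphi$, and Step 1 would then be used only to separate the two factors.
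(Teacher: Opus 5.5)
Your route is the paper's: you use Monsky density to reduce the lemma to $\omega\varphi(\mathscr L_{B/L})=\omega(u)\,\omega(\mathscr L_{A/L})$ for a fixed unit $u$, you use $L_{A/K}(\omega,1)=L_{B/K}(\omega\varphi,1)$, and you compare the factors of \eqref{e:fudge}. Your items (1)--(4) match \eqref{e:compdag}, \eqref{e:comalpha}, \eqref{e:compXi} in the simplified form valid here, since $A$ has good reduction everywhere. The gap is that the only step with real content is left as ``we expect these to cancel'', and your description of that step is wrong.

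Every ratio of corresponding factors is a value of $\varphi$, hence $\pm1$. If $\omega(u)=\pm1$ on a dense set of $\omega$, then $u^2=1$, i.e.\ $u=\pm1$. So you must show that the sign $\star_{B,\omega\varphi}/\star_{A,\omega}$ does not depend on $\omega$, and no nontrivial power of $\mathsf F_q$ can occur in $u$. The discrepancy you arrive at, $\varphi$ evaluated on the canonical class plus $D_\omega$, varies with $\omega$ through the exponents $\ord_v D_\omega$ at places $v\in S$ inert in $F$. In general it is therefore not of the form $\omega(u)$, and the argument would not close. Moreover, the form in which \cite[(16)]{tan24} is used in \S\ref{ss:pfanchi}, with a power of $\mathsf F_q$, is special to characters of the constant-field tower. $\varphi$ is not such a character: if $F=K\F_{q^2}$, then $A$ would be a constant curve. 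So the Gauss sum changes by $\varphi$ of divisor classes, not by something that depends only on degrees.

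What is missing is an exact cancellation. For the unramified $\varphi$ one has \eqref{e:comptau}: $\tau_{\omega\varphi}=\varphi(\bar a)^{-1}\varphi(D_\omega)^{-1}\tau_\omega$, where $\bar a$ is the image of a differential idele. Your (4) reads $\alpha_{B,D_{\omega\varphi}}=\varphi(D_\omega)^{-1}\alpha_{A,D_\omega}$. Since $\star_\omega$ contains the product $\alpha_{D_\omega}^{-1}\tau_\omega$, the two factors $\varphi(D_\omega)^{\pm1}$ cancel. With $\dag$, $\Delta$, $\kappa$ and $\Xi_S$ unchanged, you get $\star_{B,\omega\varphi}=\varphi(\bar a)\,\star_{A,\omega}$. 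Hence $u=\varphi(\bar a)=\pm1$, independent of $\omega$.

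Your fallback through \eqref{e:anrest} does not bypass this. It only gives $(\mathscr L_{A/L/K}\,(\mathscr L_{A/L/K})_\varphi)=(\mathscr L_{B/L/K}\,(\mathscr L_{B/L/K})_\varphi)$, which is equally consistent with $(\mathscr L_{A/L/K})=(\mathscr L_{B/L/K})$. Separating the two factors needs exactly the interpolation comparison above. In the paper the implication runs the other way: Lemma~\ref{l:prod} is the input for \eqref{e:xL}.
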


The lemma together with the restriction formula imply
\begin{equation}\label{e:xL}
\mathscr L_{B/L/F}\cdot\Lambda_\Gamma=\mathscr L_{B/L/K}\cdot \mathscr L_{A/L/K}\cdot\Lambda_\Gamma.
\end{equation}

Since
$$\mathscr L_{B/L/F}\cdot\Lambda_\Gamma=\mathrm{CH}_{\Lambda_\Phi}(X_{B/L})\cdot \Lambda_\Gamma,$$
$$\mathscr L_{B/L/K}\cdot \Lambda_\Gamma=\mathrm{CH}_{\Lambda_\Gamma}(X_{B/L}),$$
this yields the equality \eqref{e:imc}
$$\mathscr L_{A/L/K}\cdot \Lambda_\Gamma=\mathrm{CH}_{\Lambda_\Gamma}(X_{A/L}).$$

\subsection{The proof of Lemma \ref{l:xvarphi}}\label{su:lemmas}
Fix an $L$-isomorphism of $p$-divisible groups
\[
\iota:\ A_{p^\infty}\times_K L \xrightarrow{\ \sim\ } B_{p^\infty}\times_K L
\quad\text{with}\quad
^g\iota=\iota\circ[\varphi(g)],\ \ g\in\Gamma.
\tag{N}
\]
Functoriality of flat cohomology gives

\(\iota_*:\coh^1_{\mathrm{fl}}(L,A_{p^\infty})\xrightarrow{\ \sim\ }\coh^1_{\mathrm{fl}}(L,B_{p^\infty})\).
For $g\in\Gamma$ and a morphism $f:G\to H$ of $L$-group schemes, put
$^g f := g_H^{}\circ f\circ g_G^{-1}$. Here $g_H^{}$ and $g_G^{}$ respectively 
denote the action of $g$ on $H$ and $G$. Then on cohomology we have the naturality
\[
g_H^{}\circ f_*=(^g f)_*\circ g_G^{}.
\]
Applying this to $f=\iota$ and using (N) gives
\[
g_B^{}\circ \iota_* \;=\; (^g\iota)_*\circ g_A^{}
\;=\; (\iota\circ[\varphi(g)])_*\circ g_A^{}
\;=\; \iota_*\circ[\varphi(g)]_*\circ g_A^{}. \tag{$\ast$}
\]



Since $A$ is non-constant, by Lemma \ref{isolemma3}, we have $p=2$ and $\varphi(\Gamma)\subset \Aut(B_{\bar\F_q})=\{\pm 1\}$.
Hence $[\varphi(g)]$ is either $[1]$ or $[-1]$ on $B_{p^\infty}$, and the induced map on
cohomology is multiplication by $\varphi(g)\in\{\pm 1\}$.

Since $[\pm1]$ acts by $\pm1$ on $p$-power torsion, we have $[\varphi(g)]_*=\varphi(g)$ on
cohomology. Thus \((\ast)\) says exactly that
\[
\iota_*:\ \coh^1_{\mathrm{fl}}(L,A_{p^\infty}) \xrightarrow{\ \sim\ }
\tensor[^{[\varphi]}]{\coh^1_{\mathrm{fl}}(L,B_{p^\infty})}{}
\]
is $\Gamma$-equivariant, where the twisted action on the target is
$g\cdot_\varphi y:=\varphi(g)\,(g\cdot y)$.

The same argument after base change to $L_w$ gives $\Gamma$-equivariant 
\[
\coh^1_{\mathrm{fl}}(L_w,A_{p^\infty})\xrightarrow{\ \sim\ }\tensor[^{[\varphi]}]{\coh^1_{\mathrm{fl}}(L_w,B_{p^\infty})}{}
\]
for all $w$. Functoriality of the Kummer maps shows these
isomorphisms identify the local images, hence
\[
\mathrm{Sel}_{p^\infty}(A/L)\ \xrightarrow{\ \sim\ }\ \tensor[^{[\varphi]}]{\mathrm{Sel}_{p^\infty}(B/L)}{}
\]
as discrete $\Gamma$-modules.

For any discrete $\Gamma$-module $M$ and finite-order character $\psi$ there is a
canonical identification
\[
\tensor[^{[\psi]}]M{}^\vee{} \ \cong\ \bigl(\tensor[^{[\psi^{-1}]}]M{}\bigr)^\vee,\qquad
M^\vee:=\mathrm{Hom}_{\mathrm{cts}}(M,\Q_p/\Z_p),
\]
coming from $(^g \lambda)(m)=\lambda(g^{-1}\cdot m)$, for $\lambda\in M^\vee$.
Dualizing the Selmer isomorphism and using $\varphi^{-1}=\varphi$ gives
\[
\tensor[^{[\varphi]}]X{_{B/L}}\ \cong\ X_{A/L},
\]
i.e. $\tensor[^{[\varphi]}]{X}{_{B/L}}=X_{A/L}$ as $\Lambda_\Gamma$-modules.

\subsection{The proof of Lemma \ref{l:prod}}\label{su:lemL}
Since $\dag_{B/L}\not=0$, $\dag_{A/L}\not=0$, it is sufficient to show that for some $u\in\Lambda_\Gamma^*$,
$$\dag_{B/L}\cdot \dag_{A/L} \cdot ((\mathscr L_{B/L})_\varphi-u\cdot\mathscr L_{A/L})=0,$$
or equivalently that if $\omega\in\hat\Gamma$, such that $\omega(\dag_{B/L}\cdot \dag_{A/L})\not=0$,  then
$$\omega((\mathscr L_{B/L})_\varphi)=\omega(u)\cdot\omega(\mathscr L_{A/L}),$$
since $\{\omega\in\hat\Gamma\;\mid \; \omega(\dag_{B/L}\cdot\dag_{A/L})\not=0\}\subset\hat\Gamma$ is open dense in the topology of Monsky \cite[Theorem 2.2 and 2.6]{monsky}.
For such $\omega$, we can write 
$$\omega((\mathscr L_{B/L})_\varphi)=\omega\varphi(\mathscr L_{B/L})=\star_{B,\omega\varphi}\cdot L_{B/K}(\omega\varphi,1),$$
and
$$\omega(\mathscr L_{A/L})=\star_{A,\omega}\cdot L_{A/K}(\omega,1)=\star_{A,\omega}\cdot L_{B/K}(\omega\varphi,1).$$
To compare $\star_{B,\omega\varphi}$ and $\star_{A,\omega}$, we check the fudge factors on the corresponding right hand-sides of \eqref{e:fudge} as follows. In the discussion below, the fudge factors will have the notation
$B$ or $A$ added to their index, thus, instead of $\alpha_{D_{\omega\varphi}}$ and $\alpha_{D_\omega}$, we shall see $\alpha_{B,D_{\omega\varphi}}$ and $\alpha_{A,D_\omega}$, and so on.
Since $F/K$ is everywhere unramified (Lemma \ref{l:Funram}), we have $\Delta_B=\Delta_A$ and 
$D_{\omega\varphi}=D_\omega$. If $\epsilon$ is an idele or a divisor, with image $\bar\epsilon\in\Gamma$,
we write $\varphi(\epsilon)$ for $\varphi(\bar\epsilon)$. 

By comparison, we obtain the following formulae.

 \begin{equation}\label{e:compdag}
 \omega\varphi(\dag_{B/L})=\prod_{v\in S_1} \varphi(\sigma_v) \cdot \omega(\dag_{A/L}).
 \end{equation}
 Indeed, if $\varphi(\Gamma_v)=1$, then $B_v=A_v$, so 
 $\omega\varphi(\dag_{B/L,v})=\omega(\dag_{A/L,v})$; if $v\in S_1$ and $\varphi(\sigma_v)=-1$, then $\F_{q_v^2}$ is the residue field of $F_v$, hence contained in $L_v$, so
$$\omega\varphi(\dag_{B/L,v})=\lambda_{B,v}-\omega(\sigma_v)\varphi(\sigma_v)=\varphi(\sigma_v)\cdot (\lambda_{A,v}-\omega(\sigma_v))=\varphi(\sigma_v)\cdot\omega(\dag_{A/L,v}).$$

If $\pi_v$ is a prime element of $K_v$, then $\varphi(\pi_v)\cdot \alpha_{B,v}=\alpha_{A,v}$ and
$\varphi(\pi_v)\cdot \lambda_{B,v}=\lambda_{A,v}$, so
\begin{equation}\label{e:comalpha}
\alpha_{B,D_{\omega\varphi}}=\varphi(D_{\omega})^{-1}\cdot \prod_{v\in S_m\cap\mathrm{Supp}(D_\omega)}\varphi(\pi_v)\cdot \alpha_{A,D_\omega}.\\
\end{equation}

If $a$ is an associated differential idele in \cite[\S 1.3.2]{tan24}, then
\begin{equation}\label{e:comptau}
\tau_{\omega\varphi}=\varphi(\bar a)^{-1}\cdot \varphi(D_\omega)^{-1}\cdot \tau_\omega.
\end{equation}
Finally, since for $v\not\in \mathrm{Supp}(D_\omega)$, $\varphi([v]_L)=\varphi(\pi_v)=\pm 1$, so
for $v\in S_o$,
$$(1-\alpha_{B,v}^{-1}\omega([v]_L)\varphi([v]_L))(1-\alpha_{B,v}^{-1}\omega([v]_L)^{-1}\varphi([v]_L)^{-1})=
(1-\alpha_{A,v}^{-1}\omega([v]_L))(1-\alpha_{A,v}^{-1}\omega([v]_L)^{-1}),$$

and for $v\in S_m$,
$$\lambda_{B,v}-\omega([v]_L)^{-1}\varphi([v]_L)^{-1}
=\varphi(\pi_v)\cdot (\lambda_{A,v}-\omega([v]_L)^{-1}).$$

Thus,
\begin{equation}\label{e:compXi}
\Xi_{B,S,\omega\varphi} =\prod_{\stackrel{v\in S_m}{v\not\in \mathrm{Supp}(D_\omega)}}\varphi(\pi_v)\cdot \Xi_{A,S,\omega}.
\end{equation}
The formulae \eqref{e:compdag} $\thicksim$ \eqref{e:compXi}  together imply that 
$$u=\prod_{v\in S_1}\varphi(\sigma_v)\cdot \prod_{v\in S_m} \varphi(\pi_v)\cdot \varphi(\bar a)=\pm 1\in\Lambda_\Gamma^*.$$

In the following discussion we
assume that $F\not=K$.

\subsection{The $l$-power torsion subgroups}\label{su:ppt}
For a prime number $l$ and $\nu=0,1,...,\infty$, denote $A[l^\nu] = A_{l^\nu}(\bar K)$,
$B[l^\nu]=B_{l^\nu}(\bar{\F}_q)$.
Write $\mathcal K_l:=K(A[l^\infty])$ and $\mathcal F_l:=\F_q(B[l^\infty])$. Since $A/F=B/F$,
\begin{equation}\label{e:klfl}
\mathcal K_l F=\mathcal F_l F.
\end{equation}
\begin{lemma}\label{isolemma0} We have $F\subset \K_l\mathcal F_l$.
\end{lemma}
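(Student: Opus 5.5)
We want $F\subset \K_l\mathcal F_l$, where $F$ is the fixed field of $\ker(\varphi)$. Equivalently, every $\tau\in\Gal(K^s/K)$ fixing $\K_l\mathcal F_l$ lies in $\ker\varphi$, i.e.\ satisfies $\varphi(\tau)=1$. The plan is to compare the Galois actions on $A[l^\infty]$ and on $B[l^\infty]$ through the twisting isomorphism, and to use that automorphisms of $B$ act faithfully on $l$-power torsion.

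First fix an isomorphism $\iota\colon A\times_K F\to B\times_{\F_q} F$ realizing the twist. As in (N) of \S\ref{su:lemmas}, it satisfies ${}^\tau\iota=\iota\circ[\varphi(\tau)]$, or more precisely ${}^\tau\iota=[\varphi(\tau)]\circ\iota$ up to the convention for the cocycle. Since $\Aut(B)$ is abelian in the relevant cases, or since $\varphi$ is a homomorphism, the side on which $[\varphi(\tau)]$ appears will not matter. Restricting to $l$-power torsion gives, for $P\in A[l^\infty]$ and $\tau\in\Gal(K^s/K)$,
\[
\iota({}^\tau P)={}^\tau(\iota(P))\ \text{twisted by}\ \varphi(\tau),
\quad\text{that is,}\quad
{}^\tau\bigl(\iota(P)\bigr)=\varphi(\tau)^{\pm1}\bigl(\iota({}^\tau P)\bigr).
\]
Here ${}^\tau$ acts on $B[l^\infty]\subset B(\bar\F_q)$ through its restriction to $\bar\F_q$.

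Now take $\tau$ fixing $\K_l\mathcal F_l$. Then $\tau$ fixes every point of $A[l^\infty]$, because $\K_l=K(A[l^\infty])$, and it fixes every point of $B[l^\infty]$, because $\mathcal F_l=\F_q(B[l^\infty])$. The displayed relation then says that $\varphi(\tau)$ acts trivially on $B[l^\infty]$. The remaining step is to show that an automorphism of $B$ acting trivially on $B[l^\infty]$ is the identity. For this, apply $\varphi(\tau)-1\in\mathrm{End}(B)$. If it is nonzero it is an isogeny with finite kernel, so it cannot kill the infinite group $B[l^\infty]$. This works for every prime $l$, including $l=p$: since $B$ is ordinary, $B[p^\infty]\cong\Q_p/\Z_p$ is infinite. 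Hence $\varphi(\tau)=1$, and so $\tau\in\ker\varphi$. Since $F$ is the fixed field of $\ker\varphi$, this gives $F\subset\K_l\mathcal F_l$.

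The main obstacle is getting the twisting relation right on geometric points. This means checking that $\iota$ is defined over $F$ and that the Galois action on $B[l^\infty]$ only involves $\tau|_{\bar\F_q}$, so that fixing $\mathcal F_l$ really forces ${}^\tau$ to act trivially on $B[l^\infty]$. I would verify this directly from the cocycle description in \eqref{e:phidecom}, taking $\iota$ over $K^s$ with ${}^\tau\iota\circ\iota^{-1}=\varphi(\tau)$. The faithfulness step is routine.
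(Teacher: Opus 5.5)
Your proposal is correct and follows essentially the same argument as the paper: for $\tau$ fixing $\K_l\mathcal F_l$, the twisting relation shows that $\varphi(\tau)$ fixes $B[l^\infty]$ pointwise, and $\varphi(\tau)=1$ follows because $1-\varphi(\tau)$ would otherwise be an isogeny with finite kernel. Your remark that ordinarity of $B$ is what makes this work for $l=p$ (since $B[p^\infty]$ is then infinite) is a detail the paper leaves implicit, even though it later applies the lemma with $l=p$.
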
\begin{proof}

Put $\mathcal G:=\Gal(K^s/\K_l\mathcal F_l)$. Since $A[l^\infty]\subset A(\K_l)$, $B[l^\infty]\subset B(\mathcal F_l)$, for every $\tau\in \mathcal G$, $P\in A[l^\infty]$, $Q\in  B[l^\infty]$,
$$^\tau P=P,\quad ^\tau Q=Q,$$
or, by the twisting,
$$^\tau (\varphi(\tau)Q)=\varphi(\tau)(^\tau Q)=Q,\quad ^\tau Q=Q.$$
These imply that the automorphism $\varphi(\tau)$ fixes every $Q\in  B[l^\infty]$, so
the endomorphism $id-\varphi(\tau)$ has kernel containing $ B[l^\infty]$. The only possibility is that
$\varphi(\tau)=id $. Thus, $\varphi(\tau)=id$, for $\tau$ fixing the field $\K_l\mathcal F_l$.
In other words, if $\tau$ fixes $\K_l\mathcal F_l$, then it also fixes $F$.

\end{proof}

\begin{lemma}\label{l:Funram}The extension $F/K$ is unramified everywhere.
\end{lemma}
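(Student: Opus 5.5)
The plan is to reduce unramifiedness of $F/K$ to a statement about $l$-adic Tate modules for some auxiliary prime $l\neq p$. Fix such an $l$, chosen large (say $l\geq 5$ and $l\neq p$). By Lemma \ref{isolemma0}, $F\subset \K_l\mathcal F_l$. The extension $\mathcal F_l/\F_q$ is a constant field extension, hence unramified everywhere over $K$. It therefore suffices to show that $F\cdot\mathcal F_l K\subset \K_l\mathcal F_l$ is unramified over $\mathcal F_lK$ at every place. Equivalently, we must show that the inertia subgroups $I_v$ of $\Gal(K^s/K)$ act through $\varphi$ trivially.

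Fix a place $v$ of $K$. First consider a place $v$ where $A$ has good reduction. The Néron–Ogg–Shafarevich criterion (for $l\neq p$) says $I_v$ acts trivially on $A[l^\infty]$. On the other hand, $I_v$ acts trivially on $B[l^\infty]$, since $B$ is constant. The twisting relation used in the proof of Lemma \ref{isolemma0} then gives, for $\tau\in I_v$, that $\varphi(\tau)$ fixes all of $B[l^\infty]$. Hence $\varphi(\tau)=\mathrm{id}$, because a nontrivial automorphism of $B$ cannot fix $B[l^\infty]$: $\mathrm{id}-\varphi(\tau)$ would be a nonzero isogeny with infinite kernel. So $\varphi$ is unramified at all places of good reduction.

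The remaining places are the places of bad reduction, which by standing hypothesis are multiplicative, and this is the step I expect to be the main obstacle. The idea is to exclude multiplicative reduction altogether for a twist of a constant curve. Over $F$, the curve $A$ becomes isomorphic to $B$, which has good reduction everywhere. Since $\varphi$ has finite image, $A$ has potentially good reduction at every $v$. By Tate's theory, a curve with multiplicative reduction has $\ord_v j<0$, whereas $j(A)=j(B)\in\F_q$ is constant. Hence $A$ has no multiplicative places, and semistability forces good reduction everywhere. (Alternatively, the $l$-adic inertia action at a multiplicative place is unipotent of infinite order, which is impossible when it factors through the finite group $\varphi(\Gal)$ twisted by the trivial action on $B[l^\infty]$.) Combining this with the previous paragraph, every place is a good reduction place, so $\varphi(I_v)=1$ for all $v$ and $F/K$ is everywhere unramified.

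Two points need care. First, the argument must apply to wild inertia as well as tame inertia when $v$ has residue characteristic $p$. This is fine, because Néron–Ogg–Shafarevich with $l\neq p$ covers the full inertia group. Second, I would double-check that the identification of the $I_v$-action on $A[l^\infty]$ with $\varphi(\tau)$ applied to the $I_v$-action on $B[l^\infty]$ is exactly the twisting formula already used in the proof of Lemma \ref{isolemma0}, so that no new input is needed.
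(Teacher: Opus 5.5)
Your proof is correct and follows essentially the paper's route. Constant $j$-invariant plus semistability rules out multiplicative places, and then N\'eron--Ogg--Shafarevich for $l\neq p$ combined with the twisting argument shows that $\varphi$ kills inertia. The only difference is cosmetic: you rerun the argument of Lemma~\ref{isolemma0} directly for $\tau\in I_v$, whereas the paper cites $F\subset\K_l\mathcal F_l$ and notes that $\K_l\mathcal F_l/K$ is everywhere unramified. For that reason, your detour through $\mathcal F_lK$ in the first paragraph is unnecessary.
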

\begin{proof}
Since $A/K$ is isotrivial, its $j$-invariant is constant, hence $j(A)\in \F_q$.
In particular, for every place $v$ of $K$, writing $v(\cdot)$ for the additive
normalized valuation of $K_v$, we have $v(j(A))\ge 0$, so $A$ has no multiplicative reduction at every place of $K$. As $A/K$ is semistable, it follows that $A$ has good reduction at every place of $K$.

Choose a prime number $l\not=p$. By the N\'{e}ron--Ogg--Shafarevich criterion,
$\K_l/K$ is everywhere unramified. Since $\mathcal F_l\subset \bar\F_q$, the extension
$\K_l\mathcal F_l/K$ is also everywhere unramified, and so is the subextension
$F\subset \K_l\mathcal F_l$ over $K$.
\end{proof}

Put
$$\mathcal{M} =\F_q(B[p]).$$

\begin{lemma}\label{isolem1}
The Galois group $\Gal(\mathcal F_p/\F_q) \cong \mathbb{Z}_p \times \operatorname{Gal}(\mathcal{M}/\mathbb{F}_q)$, where $\Gal(\mathcal{M}/\mathbb{F}_q)$ can be viewed as a subgroup of $\F_p^*$. Hence $\mathcal F_p=\F_{q^\infty} \M$. If $p=2$, then $\M=\mathbb{F}_q$.
\end{lemma}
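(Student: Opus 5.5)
The plan is to read off the structure of $\Gal(\mathcal F_p/\F_q)$ from the action of Frobenius on the étale part of the $p$-divisible group of $B$. Since $B$ is an ordinary elliptic curve over $\F_q$, the group $B[p^\infty]=B_{p^\infty}(\bar\F_q)$ is isomorphic to $\Q_p/\Z_p$. The absolute Galois group $\Gal(\bar\F_q/\F_q)\simeq\hat\Z$ is topologically generated by the Frobenius substitution $x\mapsto x^q$, which acts on $B[p^\infty]$ through multiplication by the unit root $\alpha\in\Z_p^\times$. This $\alpha$ is the eigenvalue of Frobenius on the Tate module of the étale part, exactly as in \S\ref{ss:locfactor}. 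Hence $\Gal(\mathcal F_p/\F_q)$ is identified with the closed subgroup of $\Z_p^\times$ topologically generated by $\alpha$, and $\Gal(\M/\F_q)$ is identified with its image in $(\Z/p)^\times=\F_p^*$ under reduction mod $p$. This already shows that $\Gal(\M/\F_q)$ embeds in $\F_p^*$.

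Next I use the standard decomposition $\Z_p^\times\simeq \mu_{p-1}\times(1+p\Z_p)$ for odd $p$, and $\Z_2^\times=\{\pm1\}\times(1+4\Z_2)$ for $p=2$. The closed subgroup $\overline{\langle\alpha\rangle}$ is procyclic. By \eqref{e:weilnumber}, $\alpha$ is not a root of unity, so its pro-$p$ part is infinite and hence isomorphic to $\Z_p$. Its prime-to-$p$ part is the finite cyclic group $\langle\omega(\alpha)\rangle$ generated by the Teichmüller component of $\alpha$, and this maps isomorphically onto the image of $\alpha$ in $\F_p^*$, that is, onto $\Gal(\M/\F_q)$. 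So for odd $p$ we get $\overline{\langle\alpha\rangle}\simeq \Z_p\times\Gal(\M/\F_q)$.

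From the group structure we obtain the field statement. $\mathcal F_p/\F_q$ is a subextension of $\bar\F_q/\F_q$ whose Galois group is $\Z_p\times$(finite of order prime to $p$). Its maximal pro-$p$ subextension is therefore the unique $\Z_p$-extension $\F_{q^{p^\infty}}$, which I take to be what $\F_{q^\infty}$ means here. The fixed field of the $\Z_p$ factor is $\M$, because it is cut out by the prime-to-$p$ quotient, which we identified with the mod-$p$ action. Hence $\mathcal F_p=\F_{q^\infty}\M$.

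For $p=2$ there is a wrinkle: the torsion $\pm1$ of $\Z_2^\times$ does not reduce injectively mod $2$. However, $\F_2^*$ is trivial, so $\M=\F_q(B[2])$ has trivial Galois group, giving $\M=\F_q$ directly. One must then check that $\overline{\langle\alpha\rangle}\simeq\Z_2$ still holds. If $\alpha\equiv1\pmod 4$ this is clear. If $\alpha\equiv 3\pmod 4$, then $\overline{\langle\alpha\rangle}$ is still procyclic pro-$2$ and torsion-free (since $\alpha$ is not a root of unity), so it is again $\cong\Z_2$. Thus the product decomposition holds with the finite factor trivial.

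I expect the main care point to be this $p=2$ torsion-freeness argument together with the identification of the prime-to-$p$ quotient with $\Gal(\M/\F_q)$.
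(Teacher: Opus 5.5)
Your proof is correct and follows essentially the paper's route. Like the paper, you embed $\Gal(\mathcal F_p/\F_q)$ into $\Aut(B[p^\infty])\cong\Z_p^\times$, split off the part detected by reduction mod $p$ (which is $\Gal(\M/\F_q)$), and invoke the uniqueness of the $\Z_p$-extension of $\F_q$. The only difference is that you identify the image explicitly as $\overline{\langle\alpha\rangle}$ via the Frobenius unit root, which makes procyclicity and the infiniteness of the $\Z_p$-factor explicit; the paper instead uses the unique quadratic extension of $\F_q$ plus Nakayama for $p=2$, and leaves the nontriviality of $H$ tacit.
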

\begin{proof}
For $\nu=1$ and $\nu=\infty$, the actions of $\operatorname{Gal}\left( \mathbb{F}_q(B[p^\nu])/\mathbb{F}_q \right)$
on $B[p^\nu]$
give rise to the commutative diagram
$$\xymatrix{
\operatorname{Gal}\left( \mathcal F_p/\mathbb{F}_q \right) \ar@{^(->}[r] \ar@{->>}[d] & \mathrm{Aut}(B[p^\infty]) \ar[r]^-\sim \ar@{->>}[d] & \mathbb{Z}_p^* \ar@{->>}[d]^-{\mathsf {mod}} \\
\operatorname{Gal}\left( \M/\mathbb{F}_q \right)  \ar@{^(->}[r] & \mathrm{Aut}(B[p]) \ar[r]^-\sim & \F_p^*,} 
$$
where $\mathsf{mod}$ is the reduction modulo $p$. If $p\not=2$, then $\Z_p^times=\ker (\mathsf{mod})\times \F_p^*=\Z_p\times \F_p^*$. By the diagram, $\operatorname{Gal}(\mathcal F_p/\mathbb{F}_q) =H \times \operatorname{Gal}(\mathcal{M}/\mathbb{F}_q)$ and $H$ is a closed subgroup of $\Z_p$, and hence isomorphic to $\Z_p$. Since $\F_{q^\infty}$ is the only $\Z_p$-extension of $\F_q$, the lemma follows.

If $p=2$, then $\F_p^*$ is the trivial group, so $B[p]\subset B(\F_q)$. In this case $\Z_2^*\simeq \Z_2\times \Z/2\Z$. The diagram also shows that $\operatorname{Gal}(\mathcal F_p/\mathbb{F}_q)$ is
isomorphic to a closed subgroup $H$ of $\Z_2\times \Z/2\Z$. Since $\F_q$ has a unique quadratic extension,
$H/2H$ must be isomorphic to $\Z/2\Z$, so $H\simeq \Z_2$ (Nakayama lemma) as desired.

\end{proof}

\subsection{The proof of Lemma \ref{isolemma3}}\label{su:iso3}

\begin{lemma}\label{isolemma2}
Under the assumption that $A_{p^\infty}(L)$ is infinite, we have $\M K\subset F \subset \mathcal{M}L$.
\end{lemma}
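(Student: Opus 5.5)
We prove the two inclusions $\M K\subset F$ and $F\subset \M L$ separately. Throughout we use the twisting description: for $\tau\in\Gal(K^s/K)$ and $P\in A[p^\infty]$ corresponding to $Q\in B[p^\infty]$, the point ${}^\tau P$ corresponds to $\varphi(\tau)({}^\tau Q)$. We also use that $A[p^\infty]=A(L)[p^\infty]$, which the standing assumption of this section gives, so that $\K_p\subset L$.

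For $F\subset \M L$, take $\tau\in\Gal(K^s/\M L)$. Since $\tau$ fixes $L\supset\K_p$, it fixes every $P\in A[p^\infty]$, and so $\varphi(\tau)({}^\tau Q)=Q$ for all $Q\in B[p^\infty]$. By Lemma~\ref{isolem1}, $\mathcal F_p=\F_{q^\infty}\M$. Moreover $\F_{q^\infty}$, being the constant $\Z_p$-extension $K^{(p)}_\infty$, lies in $L$; this should follow from $\K_p\subset L$, since $\K_p\supset \mathcal F_p\cap$ (constants) by \eqref{e:klfl} modulo the finite extension $F$. Hence $\tau$ fixes $\mathcal F_p$, i.e. ${}^\tau Q=Q$, and therefore $\varphi(\tau)$ fixes all of $B[p^\infty]$. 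As in Lemma~\ref{isolemma0}, an automorphism of $B$ acting trivially on $B[p^\infty]$ is the identity, because $\mathrm{id}-\varphi(\tau)$ would have infinite kernel. So $\varphi(\tau)=\mathrm{id}$, and $\tau$ fixes $F$.

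For $\M K\subset F$, take $\tau\in\Gal(K^s/F)$, so $\varphi(\tau)=\mathrm{id}$. Then ${}^\tau P$ corresponds to ${}^\tau Q$. Because $\K_p\subset L$ and $\Gal(K^s/L)$ acts trivially on $A[p]$, the action of $\Gal(K^s/K)$ on $A[p]$ factors through the pro-$p$ group $\Gamma$. Its image in $\mathrm{Aut}(A[p])\simeq\F_p^*$ has order prime to $p$, hence is trivial. Restricting to $\Gal(K^s/F)$, the action on $B[p]$ is then also trivial, so $\tau$ fixes $\M$. This gives $\M\subset F$, and since $K\subset F$, also $\M K\subset F$.

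The main obstacle I expect is the step justifying $\F_{q^\infty}\subset L$, equivalently $\mathcal F_p\subset \M L$. I would argue this first from \eqref{e:klfl}: $\mathcal F_p F=\K_p F\subset LF$, and $[F:K]$ divides $|\mathrm{Aut}(B)|$, which is finite. Thus the $\Z_p$-part $\F_{q^\infty}K$ lies in $LF$. Since $LF/L$ is finite and $\F_{q^\infty}K/K$ is pro-$p$ of infinite degree, one compares $p$-parts to conclude $\F_{q^\infty}K\subset L$. If this comparison fails, I would instead define $\tau$ to fix $\mathcal F_p L=\F_{q^\infty}\M L$ directly, and then show $\F_{q^\infty}\subset L$ separately using the ordinarity of $B$: the Frobenius acts on the étale quotient through a $\Z_p^\times$-character of infinite order, so $\K_p$ must contain the unramified $\Z_p$-extension.
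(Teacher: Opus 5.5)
Your proof is correct. One half follows the paper and the other half takes a different route.

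\textbf{The inclusion $F\subset\M L$.} Here you are doing what the paper does. The paper quotes Lemma~\ref{isolemma0} with $l=p$ to get $F\subset\K_p\mathcal F_p$, and then uses $\K_p\subset L$ and $K\mathcal F_p=K^{(p)}_\infty\M\subset\M L$. You instead rerun the argument of Lemma~\ref{isolemma0} for $\tau\in\Gal(K^s/\M L)$. In both versions the input $K^{(p)}_\infty\subset L$ comes from $K^{(p)}_\infty\subset K\mathcal F_p\subset FL$. Your main-line justification of that step is sound. The precise reason is that the finite group $\Gal(FL/L)$ has trivial image in the torsion-free group $\Gal(K^{(p)}_\infty/K)\cong\Z_p$. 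Equivalently, if $K^{(p)}_\infty\cap L$ were a finite layer, then $K^{(p)}_\infty L/L$ would be an infinite subextension of the finite extension $FL/L$. So the \emph{comparing $p$-parts} phrasing, the vague first-paragraph heuristic, and the ordinarity fallback can all be dropped.

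\textbf{The inclusion $\M K\subset F$.} Here your route is genuinely different. The paper first extracts $\M\subset FL$ from $B[p^\infty]\subset B(FL)$. It then argues inside $\Gal(FL/K)$ that the pro-$p$ subgroup $\Gal(FL/F)$ must act trivially on $K\M$, because $\Gal(K\M/K)$ has order prime to $p$. You instead show directly that $A[p]\subset A(K)$: the mod-$p$ character of $\Gal(K^s/K)$ factors through the pro-$p$ group $\Gamma$ and lands in $\F_p^*$, so it is trivial. You then transport this to $B[p]$ along $\Gal(K^s/F)=\ker\varphi$.

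\textbf{What each approach buys.} Your version yields the extra fact $A[p]\subset A(K)$. It also never needs $[F:K]$ or $[FL:L]$ to be prime to $p$. The paper's wording does assume this, and it is not automatic in the case $p=2$, $[F:K]=2$. That is exactly the case in which the proof of Lemma~\ref{isolemma3} later invokes this lemma to get $F\subset L$. The conclusions still hold there, since $\M=\F_q$ and $\Z_p$ is torsion-free, but your argument covers all $p$ uniformly.
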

\begin{proof}
Since $B(FL)=A(FL)$ and $A_{p^\infty}(L)=A[p^\infty]$, we have $B_{p^\infty}(FL)=B[p^\infty]$, hence
\begin{equation}\label{e:kpfl}
  K^{(p)}_\infty\mathcal{M} = K\mathcal{F}_p \subset FL.
\end{equation}
In particular, $\mathcal{M}\subset FL$.

Put $\mathcal{G}:=\Gal(FL/K)$. Since $L/K$ is pro-$p$ and $F/K$ is finite of degree prime to $p$,
the subgroup $\Gal(FL/F)\cong\Gal(L/L\cap F)$ is pro-$p$ and the quotient
$\mathcal{G}/\Gal(FL/F)\cong\Gal(F/K)$ has order prime to $p$.
In particular, $\Gal(FL/F)$ is the pro-$p$ Sylow subgroup of $\mathcal{G}$, and every prime-to-$p$
quotient of $\mathcal{G}$ factors through $\Gal(F/K)$.

Since $[K\mathcal{M}:K]$ is prime to $p$, the subextension $K\mathcal{M}/K$ of $FL/K$
corresponds to a prime-to-$p$ quotient of $\mathcal{G}$; hence
$\Gal(FL/K\mathcal{M})\supset \Gal(FL/F)$, which gives $K\mathcal{M}\subset F$.

Similarly, by \eqref{e:kpfl}, $K^{(p)}_\infty$ is a pro-$p$ subextension of $FL/K$.
Its Galois group $\Gal(K^{(p)}_\infty/K)\cong \Z_p$ is a quotient of the pro-$p$ Sylow subgroup $\Gal(FL/F)$. Since $FL/L$ is finite of degree prime to $p$, any pro-$p$ quotient of
$\Gal(FL/K)$ factors through $\Gal(L/K)$, hence $\Gal(K^{(p)}_\infty/K)$ is also a quotient of
$\Gal(L/K)$. Therefore $K^{(p)}_\infty\subset L$.

It follows that $\mathcal{M}L\supset K^{(p)}_\infty\mathcal{M}=K\mathcal{F}_p$.
By Lemma~\ref{isolemma0}, we have $F\subset \mathcal{M}L$.
\end{proof}
Because
$B/\F_q$ is ordinary, by \cite[III.10.1, IV.4.5, IV.5.7]{sil86}, the following lemma holds.

\begin{lemma}\label{l:aut}
Let $B/\F_q$ be an ordinary elliptic curve.
\begin{enumerate}
\item If $p\not=2,3$, then $\Aut(B_{\bar{\F}_q})$ is cyclic of order $2$, $4$, or $6$.
\item If $p=2$ or $p=3$, then $\Aut(B_{\bar{\F}_q})=\{\pm 1\}\cong \Z/2\Z$.
\end{enumerate}
\end{lemma}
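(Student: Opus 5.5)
The plan is to reduce everything to the classical description of automorphism groups of elliptic curves over an algebraically closed field, as in \cite[III.10.1]{sil86}, and then use ordinarity to exclude the exceptional $j$-invariants in characteristics $2$ and $3$. Throughout, I work with $B_{\bar\F_q}$. Since automorphisms fix the origin, they form a subgroup of $\mathrm{End}(B_{\bar\F_q})^\times$, and this group depends only on $j(B)$.

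For part (1), assume $p\neq 2,3$. Then \cite[III.10.1]{sil86} gives $\Aut(B_{\bar\F_q})\cong\Bmu_n$ with $n=2$ if $j(B)\neq 0,1728$, $n=4$ if $j(B)=1728$, and $n=6$ if $j(B)=0$. In particular the group is cyclic of order $2$, $4$ or $6$, which is exactly (1), and ordinarity plays no role here. As a cross-check one can also argue directly. Since $B$ is ordinary, \cite[V.3.1]{sil86} shows $\mathrm{End}(B_{\bar\F_q})$ is an order in an imaginary quadratic field, and not a quaternion order. Its unit group is therefore a finite group of roots of unity in that field, hence cyclic of order $2$, $4$ or $6$.

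For part (2), assume $p=2$ or $p=3$. By \cite[III.10.1]{sil86}, in these characteristics $j=0=1728$ is the only $j$-invariant with $\Aut\neq\{\pm1\}$, and for every other $j$ the group is $\{\pm1\}$. It therefore suffices to show $j(B)\neq 0$. Here I invoke the classification of supersingular $j$-invariants. By \cite[V.4.1(c)]{sil86} (the computations referenced as IV.4.5 and IV.5.7 in the statement), in characteristic $2$ the only supersingular curve is $j=0$, and likewise in characteristic $3$. Since $B$ is ordinary, $j(B)\neq 0$, hence $\Aut(B_{\bar\F_q})=\{\pm1\}\cong\Z/2\Z$.

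The main point needing care is the supersingularity of $j=0$ when $p\in\{2,3\}$. One check is via the Hasse invariant. In characteristic $2$, the curve $y^2+y=x^3$ has vanishing $a_1$, and a curve in characteristic $2$ is supersingular exactly when $a_1=0$, which holds precisely at $j=0$. In characteristic $3$, $y^2=x^3-x$ has Hasse invariant equal to the coefficient of $x^{2}$ in $(x^3-x)^{1}$, which is $0$. Beyond this, everything is citation.
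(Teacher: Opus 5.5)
Your proof is correct and follows the paper's route: the paper simply cites \cite[III.10.1]{sil86} for the classification of $\Aut(B_{\bar\F_q})$, together with the fact that $j=0=1728$ is supersingular in characteristics $2$ and $3$; ordinarity then forces $j(B)\neq 0$, hence $\Aut(B_{\bar\F_q})=\{\pm1\}$. Your citation of Silverman V.4.1(c) is slightly imprecise, since that theorem is stated for odd $p$, but your explicit Hasse-invariant checks supply exactly the direction you need (every curve with $j=0$ is supersingular, because supersingularity over $\bar\F_q$ depends only on $j$), so nothing is missing.
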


Suppose $[F:K]$ is prime to $p$. Lemma \ref{isolemma2} says $F = \mathcal M K$, and since $\M$ is a finite field,
$F/K$ is a constant field extension. Moreover, since $\F_q$ is the constant field of $K$, $\mathcal M/\mathbb{F}_q$ and $K/\mathbb{F}_q$ are disjoint, so $\Gal(F/K)$ can be identified with $\Gal(\mathcal M/\mathbb{F}_q)$, $\sigma\mapsto\sigma_0$.
Thus, $\bar\varphi$ induces a homomorphism $\bar\varphi_0:\Gal(\M/\F_q)\longrightarrow \Aut(B_{\bar{\F}_q})$, by
$\bar\varphi_0(\sigma_0) =\bar\varphi(\sigma)$. Let $C$ be the twist of $B/\mathbb{F}_q$ via $\bar\varphi_0$.
Then $A = C \otimes_{\mathbb{F}_q} K$ is a constant curve, contradicting the assumption that $A/K$ is not constant.

Assume now that $p$ divides $[F:K]$. By Lemma \ref{l:aut}, this forces $p=2$ and
$\Aut(B_{\bar{\F}_q})=\{\pm 1\}$. In particular, $\Gal(F/K)$ is a finite $2$-group
mapping into $\{\pm 1\}$, hence $\Gal(F/K)$ has order $2$ and $F/K$ is quadratic.
By Lemma \ref{isolemma2}, we have $F\subset L$.


\appendix
\section{Density of Semistable Elliptic Curves with Trivial $\mu$-Invariant}\label{s:app}

In this section, we follow the notation of \cite{lst21}. Let $p>3$, let $k=\mathbf F_q$ with $q=p^e$, and let
$C/k$ be a smooth proper curve of genus $g_C$ with function field $K$.
All statements below are after base change to $\bar k$.

\subsection{The parameter spaces}

For $n\ge 1$, write $Y(n,C)$ for the space of triples $(L,g_2,g_3)$ with
$L\in\mathrm{Pic}^n(C)$ and
\[
(g_2,g_3)\in H^0(C,L^{\otimes 4})\times H^0(C,L^{\otimes 6}),
\qquad
\Delta:=\frac{g_2^{\,3}-g_3^{\,2}}{1728}\in H^0(C,L^{\otimes 12}),
\]
such that $\Delta\not\equiv 0$ and
\[
\min\bigl(3\,\operatorname{ord}_v(g_2),\,2\,\operatorname{ord}_v(g_3)\bigr)\;<\;12
\quad\text{for every closed point }v\text{ of }C.
\]
Let $\pi:Y(n,C)\to X(n,C)$ be the good $\mathbf G_m$-quotient.

In characteristic $p>3$, the condition
$\min(3\,\operatorname{ord}_v(g_2),\,2\,\operatorname{ord}_v(g_3))<12$
ensures that the triple $(L,g_2,g_3)$ gives rise to a globally minimal short
Weierstrass model over $C$, and the usual invariants satisfy $c_4=g_2$ and
$\Delta=(g_2^3-g_3^2)/1728$ on this parameter space.
Following \cite{lst21}, the Iwasawa invariant defines a function
$\mu:X(n,C)_{\bar k}\to\mathbf Z_{\ge0}$ and
\[
X_{\mu=0}^{(n)}\ :=\ \{x\in X(n,C)_{\bar k}:\ \mu(x)=0\}
\]
is Zariski open and dense for $n\gg0$ (\cite[Thm.\ 6.3.1]{lst21}).

\begin{theorem}\label{thm:X-open-dense}
For $n\gg0$, the subset
\[
\mathcal V_n \;:=\;
\Bigl\{\,x\in X(n,C)_{\bar k}\;:\;
\begin{array}{l}
\text{the corresponding elliptic curve over $K_{\bar k}:=K\otimes_k\bar k$ is semistable}\\[4pt]
\text{and }\mu(x)=0
\end{array}
\Bigr\}
\]
is Zariski open and dense in $X(n,C)_{\bar k}$.
\end{theorem}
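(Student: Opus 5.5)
Since $X^{(n)}_{\mu=0}$ is already Zariski open and dense for $n\gg0$ by \cite[Thm.~6.3.1]{lst21}, and $X(n,C)_{\bar k}$ is irreducible for $n\gg0$ (it is a weighted projective bundle over $\mathrm{Pic}^n(C)$, because $H^0(C,L^{\otimes 4})\times H^0(C,L^{\otimes 6})$ has constant dimension once $n>2g_C-2$), it suffices to show that the semistable locus $X^{ss}$ is Zariski open and nonempty. Then $\mathcal V_n=X^{ss}\cap X^{(n)}_{\mu=0}$ is an intersection of two nonempty opens in an irreducible space, hence open and dense.

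\emph{Openness of $X^{ss}$.} In characteristic $p>3$ the model given by $(L,g_2,g_3)$ is globally minimal. At a place $v$ it is semistable if and only if $v(\Delta)=0$ or $v(c_4)=0$, i.e.\ if and only if $g_2$ and $\Delta$ have no common zero. So the complement of $X^{ss}$ in $Y(n,C)$ is the image under the projection $Y(n,C)\times C\to Y(n,C)$ of the closed subset $\{(y,v): g_2(v)=0,\ \Delta(v)=0\}$. Since $C$ is proper, this image is closed. The condition is $\mathbf G_m$-invariant, so it descends to a closed subset of $X(n,C)$ via the good quotient $\pi$; good quotients send invariant closed sets to closed sets.

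\emph{Nonemptiness.} This is the main step. I would fix $L$ of degree $n\gg0$ so that $L^{\otimes 4}$ and $L^{\otimes 6}$ are very ample and globally generated. Choose $g_2\in H^0(L^{\otimes4})$ general, so that its zero set $Z$ consists of $4n$ distinct points. Then choose $g_3\in H^0(L^{\otimes 6})$ nonvanishing at every point of $Z$; this is possible because $L^{\otimes6}$ is base-point free and $Z$ is finite, so only finitely many hyperplane conditions must be avoided. At each $v\in Z$ we then get $\Delta(v)=-g_3(v)^2/1728\neq0$, so $g_2$ and $\Delta$ have no common zero. Since $g_3\ne0$ on $Z$, we also have $\Delta\not\equiv0$ and $\min(3\operatorname{ord}_v g_2,2\operatorname{ord}_v g_3)<12$ everywhere, so the triple lies in $Y(n,C)$. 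It defines a semistable curve, i.e.\ a point of $X^{ss}$.

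Finally, density of $\mathcal V_n$ follows from irreducibility of $X(n,C)_{\bar k}$. If one prefers not to rely on irreducibility, it is enough to note that both open sets are dense, provided one checks that $X^{ss}$ meets every irreducible component. That check is the same construction carried out with $L$ varying over $\mathrm{Pic}^n$, so each fiber over $\mathrm{Pic}^n(C)$ contains semistable points.
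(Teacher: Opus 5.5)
Your proposal is correct and follows essentially the same route as the paper. Openness comes from the proper projection of $V(\boldsymbol\Delta,\mathbf g_2)$, descended through the good quotient. Nonemptiness comes from choosing $g_3$ nonvanishing on the zeros of $g_2$. Density comes from irreducibility of $X(n,C)_{\bar k}$, intersected with the open dense $X^{(n)}_{\mu=0}$.

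Your nonemptiness step is slightly leaner: the paper also requires $\operatorname{div}(g_3)$ and $\operatorname{div}(\Delta)$ to be reduced, which is unnecessary. One wording fix: $X(n,C)_{\bar k}$ is an open subset of a weighted projective bundle over $\mathrm{Pic}^n(C)$, not the whole bundle, but this does not affect irreducibility.
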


\begin{proof}
\emph{Openness.}
We work over $\bar k$. Write $Y:=Y(n,C)_{\bar k}$ and replace $C$ by $C_{\bar k}:=C\times_k\bar k$.
Let $\rho:Y\to \mathrm{Pic}^n(C)_{\bar k}$ be the structural map.
Let $\mathcal P_n$ be the Poincar\'e line bundle on $C_{\bar k}\times \mathrm{Pic}^n(C)_{\bar k}$
(rigidified at a point of $C_{\bar k}$). By abuse of notation, we also denote by $\mathcal P_n$
its pullback $(\mathrm{id}_{C_{\bar k}}\times \rho)^*\mathcal P_n$ to $C_{\bar k}\times Y$.
Then there are tautological sections
\[
\mathbf g_2\in H^0(C_{\bar k}\times Y,\mathcal P_n^{\otimes 4}),\quad
\mathbf g_3\in H^0(C_{\bar k}\times Y,\mathcal P_n^{\otimes 6}),\quad
\boldsymbol\Delta:=\tfrac{\mathbf g_2^{\,3}-\mathbf g_3^{\,2}}{1728}\in H^0(C_{\bar k}\times Y,\mathcal P_n^{\otimes 12}).
\]
On $Y(n,C)$ (globally minimal short models in characteristic $p>3$), the usual Kodaira--N\'eron criterion
(or equivalently Tate's algorithm for minimal Weierstrass models) implies that a fiber over a closed point $v$
is additive if and only if $\operatorname{ord}_v(\Delta)>0$ and $\operatorname{ord}_v(c_4)>0$.
Here $c_4=\mathbf g_2$.
Since we work on $Y(n,C)$, the inequality
$\min(3\,\operatorname{ord}_v(g_2),2\,\operatorname{ord}_v(g_3))<12$ at every $v$
ensures that the associated short Weierstrass equation is minimal at each $v$.
Therefore the Kodaira--N\'eron criterion (equivalently, Tate's algorithm for minimal models)
applies fiberwise to detect additive reduction in terms of $c_4$ and $\Delta$.
Thus additive reduction occurs exactly at those pairs (v,y) for which $\boldsymbol\Delta(v,y)=0$ and $\mathbf g_2(v,y)=0$; i.e. the additive locus in $C_{\bar k}\times Y$ is $Z:=V(\boldsymbol\Delta,\mathbf g_2)$. The projection $\mathrm{pr}_2:C_{\bar k}\times Y\to Y$ is proper, hence $\mathrm{pr}_2(Z)$ is closed.
Therefore
\[
U_{\mathrm{ss}}\ :=\ Y\setminus \mathrm{pr}_2(Z)
\]
(the locus of curves semistable at every place) is Zariski open.
Since $\pi:Y\to X(n,C)_{\bar k}$ is a good $\mathbf G_m$-quotient and $U_{\mathrm{ss}}$ is $\mathbf G_m$-invariant,
\[
V_{\mathrm{ss}}\ :=\ \pi\big(U_{\mathrm{ss}}\big)\ \subset\ X(n,C)_{\bar k}
\]
is Zariski open. Intersecting with the open $X_{\mu=0}^{(n)}$ gives
\[
\mathcal V_n\ =\ V_{\mathrm{ss}}\ \cap\ X_{\mu=0}^{(n)},
\]
hence $\mathcal V_n$ is Zariski open.

\emph{Nonemptiness.}
We show that $V_{\mathrm{ss}}$ is nonempty for $n\gg 0$ by producing a point of $Y(n,C)_{\bar k}$
whose bad fibers (if any) are all multiplicative.

Fix $L\in \mathrm{Pic}^n(C_{\bar k})$ with $n$ large enough so that $L^{\otimes 4}$, $L^{\otimes 6}$, and $L^{\otimes 12}$
are globally generated. We first choose $g_2\in H^0(C_{\bar k},L^{\otimes 4})$ with reduced divisor, i.e.
$\operatorname{div}(g_2)$ has only simple zeros. Such a choice exists for $n\gg 0$:
indeed, the set of sections with a multiple zero is a proper closed subset of the affine space $H^0(C_{\bar k},L^{\otimes 4})$
(defined by the incidence condition ``$\operatorname{ord}_v(g_2)\ge 2$'' for some $v$), so its complement is nonempty open.
One way to see this is to consider the closed incidence subset
\[
\{(v,s)\in C_{\bar k}\times H^0(C_{\bar k},L^{\otimes 4}) : \operatorname{ord}_v(s)\ge 2\},
\]
defined by the vanishing of the section and its first jet at $v$; its image in
$H^0(C_{\bar k},L^{\otimes 4})$ is closed and proper, hence its complement is a nonempty open.
Let
\[
T\ :=\ \operatorname{Supp}\bigl(\operatorname{div}(g_2)\bigr).
\]

Next we choose $g_3\in H^0(C_{\bar k},L^{\otimes 6})$ satisfying the following three open conditions:
\begin{itemize}
\item $g_3(v)\neq 0$ for every $v\in T$;
\item $\operatorname{div}(g_3)$ is reduced (equivalently, $g_3$ has only simple zeros);
\item $\Delta=(g_2^3-g_3^2)/1728$ is not identically zero, and $\operatorname{div}(\Delta)$ is reduced.
\end{itemize}
Each of these conditions cuts out a Zariski open subset of $H^0(C_{\bar k},L^{\otimes 6})$:
the first is the complement of finitely many proper linear subspaces (given by the evaluation maps at points of $T$),
and the remaining ones are complements of proper closed subsets (nonreduced divisor is a closed incidence condition).
Hence for $n\gg 0$ there exists $g_3$ satisfying all three simultaneously.

For such a choice, if $v\in T$ then $g_2(v)=0$ and $g_3(v)\neq 0$, hence
\[
\Delta(v)=\frac{-g_3(v)^2}{1728}\neq 0.
\]
Equivalently,
\[
\operatorname{Supp}\bigl(\operatorname{div}(\Delta)\bigr)\cap
\operatorname{Supp}\bigl(\operatorname{div}(g_2)\bigr)\ =\ \emptyset.
\]
In particular, whenever $\Delta(v)=0$ one has $g_2(v)\neq 0$, so the fiber is multiplicative by the criterion above
(using $c_4=g_2$).

It remains to check that $(L,g_2,g_3)$ lies in $Y(n,C)_{\bar k}$, i.e. that the inequality
\[
\min\bigl(3\,\operatorname{ord}_v(g_2),\,2\,\operatorname{ord}_v(g_3)\bigr)\;<\;12
\quad\text{for every }v
\]
holds. Since $\operatorname{div}(g_2)$ and $\operatorname{div}(g_3)$ are reduced and disjoint,
for every $v$ we have $\operatorname{ord}_v(g_2)\in\{0,1\}$ and $\operatorname{ord}_v(g_3)\in\{0,1\}$, and never both equal to $1$.
Hence the minimum is at most $2$, so the inequality holds at every $v$.

Therefore the corresponding elliptic curve is semistable at every place, so the image point in $X(n,C)_{\bar k}$
lies in $V_{\mathrm{ss}}$, and $V_{\mathrm{ss}}$ is nonempty.

\emph{Density.}
For $n\gg 0$, the variety $Y(n,C)_{\bar k}$ is a nonempty open in a vector bundle over the irreducible variety
$\mathrm{Pic}^n(C)_{\bar k}$, hence $Y(n,C)_{\bar k}$ is irreducible.
The quotient map $Y(n,C)_{\bar k}\to X(n,C)_{\bar k}$ is surjective, so $X(n,C)_{\bar k}$ is irreducible as well.
Since $V_{\mathrm{ss}}$ is a nonempty open in $X(n,C)_{\bar k}$, it is dense.
By \cite[Thm.\ 6.3.1]{lst21}, $X_{\mu=0}^{(n)}$ is a dense open in $X(n,C)_{\bar k}$.
Therefore the intersection $\mathcal V_n=V_{\mathrm{ss}}\cap X_{\mu=0}^{(n)}$ is dense in $X(n,C)_{\bar k}$.
\end{proof}

\end{document}